    \newdimen\pp \pp=2.5cm
    \pgfplotsset{compat=newest}
    \crefname{equation}{equation}{equations}
    \newcommand{\defeq}{\vcentcolon=} 
    \newcommand{\eqdef}{=\vcentcolon} 
    \def\zarojel#1{{\left({#1}\right)}} 
    \def\halm#1{{\left\{{#1}\right\}}} 
    \def\norm#1{{\left\|{#1}\right\|}} 
    \def\R{\mathbb{R}} 
    \def\Z{\mathbb{Z}} 
    \def\N{\mathbb{N}} 
    \def\valseg#1{\mathbb{P}\zarojel{#1}} 
    \def\varhato#1{\mathbb{E}\zarojel{#1}} 
    \def\var#1{\mathbb{V}\hspace{-0.05cm}\text{ar}\,\zarojel{#1}} 
    \def\kek#1{\mathrm{proj}_1 {#1}} 
    \def\piros#1{\mathrm{proj}_2 {#1}} 
    \definecolor{pantone}{RGB}{134,38,51}
    \DeclarePairedDelimiter\floor{\lfloor}{\rfloor}
    \DeclarePairedDelimiter\ceil{\lceil}{\rceil}
    \theoremstyle{plain}
    \newtheorem{thm}{Theorem}[subsection]
    \newtheorem{lmm}[thm]{Lemma}
    \newtheorem{prp}[thm]{Proposition}
    \newtheorem{dfn}[thm]{Definition}
    \newtheorem{crl}[thm]{Corollary}
    \author{Ákos Urbán}
    \title{The Pólya Web}
\begin{document}

    \pagenumbering{roman}
    
    \begin{titlepage}
      \begin{center}
        \null
        \vspace{2cm}
            {\fontsize{40}{48} \selectfont \fontfamily{cmr} \textsc{\textbf{\textcolor{pantone}{The Pólya Web}}}}\\
            \vspace{0.5cm}
            \Large{\textsc{Master's Thesis}}\\
            \vspace{2.5cm}
            \Large{\textsc{Author}}\\
    	\large{\textbf{Ákos Urbán}}\\
    	\small{\textit{MSc student, TU Budapest}}\\
            \vspace{1cm}
            \Large{\textsc{Supervisors}}\\
            \vspace{0.75cm}
    				\begin{minipage}{0.45\textwidth}
                    \begin{center}
    				\large{\textbf{Balázs Bárány}}\\
    				\small{\textit{associate professor,\\Department of Stochastics, TU Budapest}}
                    \end{center}
    				\end{minipage}
                    \hfill
    				\begin{minipage}{0.45\textwidth}
                    \begin{center}
    				\large{\textbf{Bálint Tóth}}\\
    				\small{\textit{research professor,\\Alfréd Rényi Institute of Mathematics}}
                    \end{center}
    				\end{minipage}
           \vfill
           \includegraphics[width=0.5\textwidth]{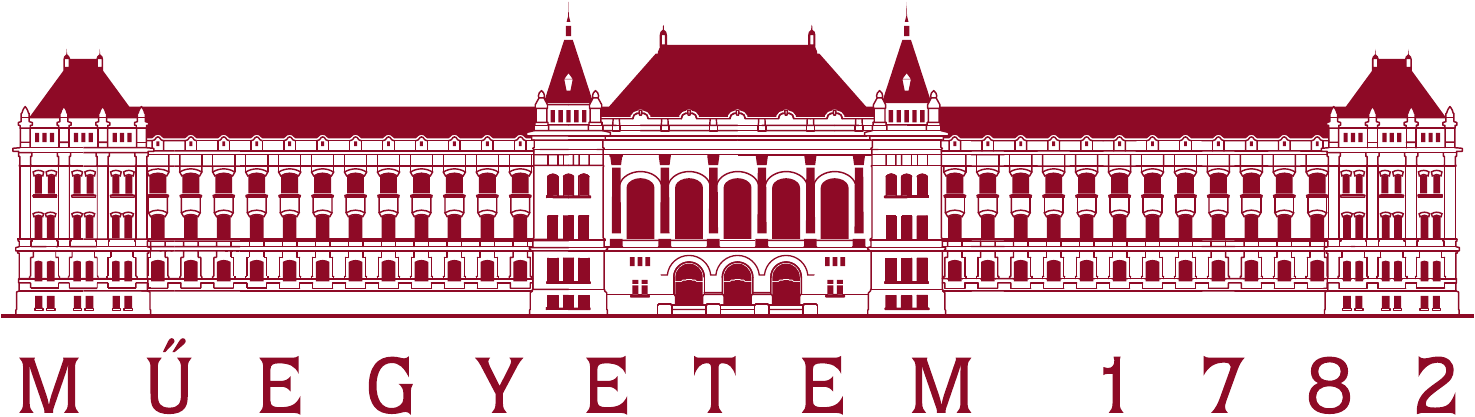}\\
           \large{Budapest University of Technology and Economics} \\
           \large{2025} 
               
       \end{center}
    \end{titlepage}
    
    \newpage
    \thispagestyle{empty}
    \null
    \vfill
    Ákos Urbán: \textit{The Pólya Web}, Master's Thesis, December 2025.
    \newpage
    \setcounter{page}{3}
    \thispagestyle{plain}
    \tableofcontents
    
    \newpage
    \thispagestyle{plain}
    \null
    \vspace{4cm}
    \begin{flushleft}
    \begin{large}
    \textbf{Acknowledgements}\\
    \end{large}
    \end{flushleft}
    I would like to take this opportunity to express my gratitude to my supervisors, \textit{Bálint Tóth} and \textit{Balázs Bárány}. For their help, advice and insights in more than two years of working together.
    In addition, for making probability theory my main interest during this time.
    I have learnt a lot from them.\\
    I would also like to thank \textit{Balázs Ráth} and \textit{Bálint Vető} for answering my questions. Even if I caught them off guard in the corridor or in front of building H.\\
    Specials thanks to \textit{Michelle} for her patience, support and understanding.
    \newpage

    \pagenumbering{arabic}
    
    \section{Introduction and motivation}
    \thispagestyle{plain}
    The notion of the Pólya Urn is well-known.
    Consider an urn with some blue and red balls in it.
    At each step take a uniform sample from the urn and add an extra ball to the urn of the same colour as the sample.
    Therefore, at each step the total number of balls in the urn increases by one.

    Formally, if we denote by $x_0$ and $y_0$ the initial number of blue and red balls and by $X_n$ and $Y_n$ the number of red and blue balls after step $n$ then it leads to the following Markov chain on $\N \times \N$
    \[
    (X_0,Y_0) = (x_0,y_0)
    \]
    and the transition kernel is
    \begin{equation} \label{eq:polya_urn_kernel}
        p_{x,y}(x+1,y) = \frac{x}{x+y} \qquad \text{and} \qquad
    p_{x,y}(x,y+1) = \frac{y}{x+y}.
    \end{equation}
    Notice that we can represent a Pólya Urn as a random walk on the integer lattice $\N\times\N$.
    We call this the Pólya Walk.

    We have a vast knowledge and applications of the Pólya Urn.
    Since the focus of the thesis is not to present those, we just recall the most important ones in this introduction. First of all, if we consider
    \[
    Z_n \defeq \frac{X_n}{X_n+Y_n}
    \]
    that is, the ratio of the blue balls to the total number of balls at time $n$, then $Z_n$ is a martingale with respect to the natural filtration of the Pólya Urn process.
    Since it is a non-negative and bounded martingale, it converges almost surely.
    Moreover, we know that the distribution on the limiting random variable is $\text{Beta}(x_0,y_0)$. Thus
    \[
    \lim_{n \to \infty} Z_n = Z \sim \text{Beta}(x_0,y_0) \quad \text{almost surely}.
    \]

    As mentioned before, probably everything has been said about the Pólya Urn since it was first introduced in \cite{polya_eggenberger:1923}.
    However, we hope that we can add something new with the following construction.
    Consider Pólya Walks started from every point of the positive quadrant of the integer lattice $\N \times \N$ in the following way.
    Every walk is independent from the other until they meet at a point.
    From that time the two walks coalesce and will be the same for the rest of the walk.
    In this way we get a web of coalescing Pólya Walks.
    We call this the \textit{Pólya Web} (\Cref{fig:samples_of_the_polya_web}).
    The main goal of the thesis is to investigate the properties of this object which turns out to be quite interesting.
    
    \begin{figure}[H]
      \centering
      \includegraphics[width=0.49\textwidth]{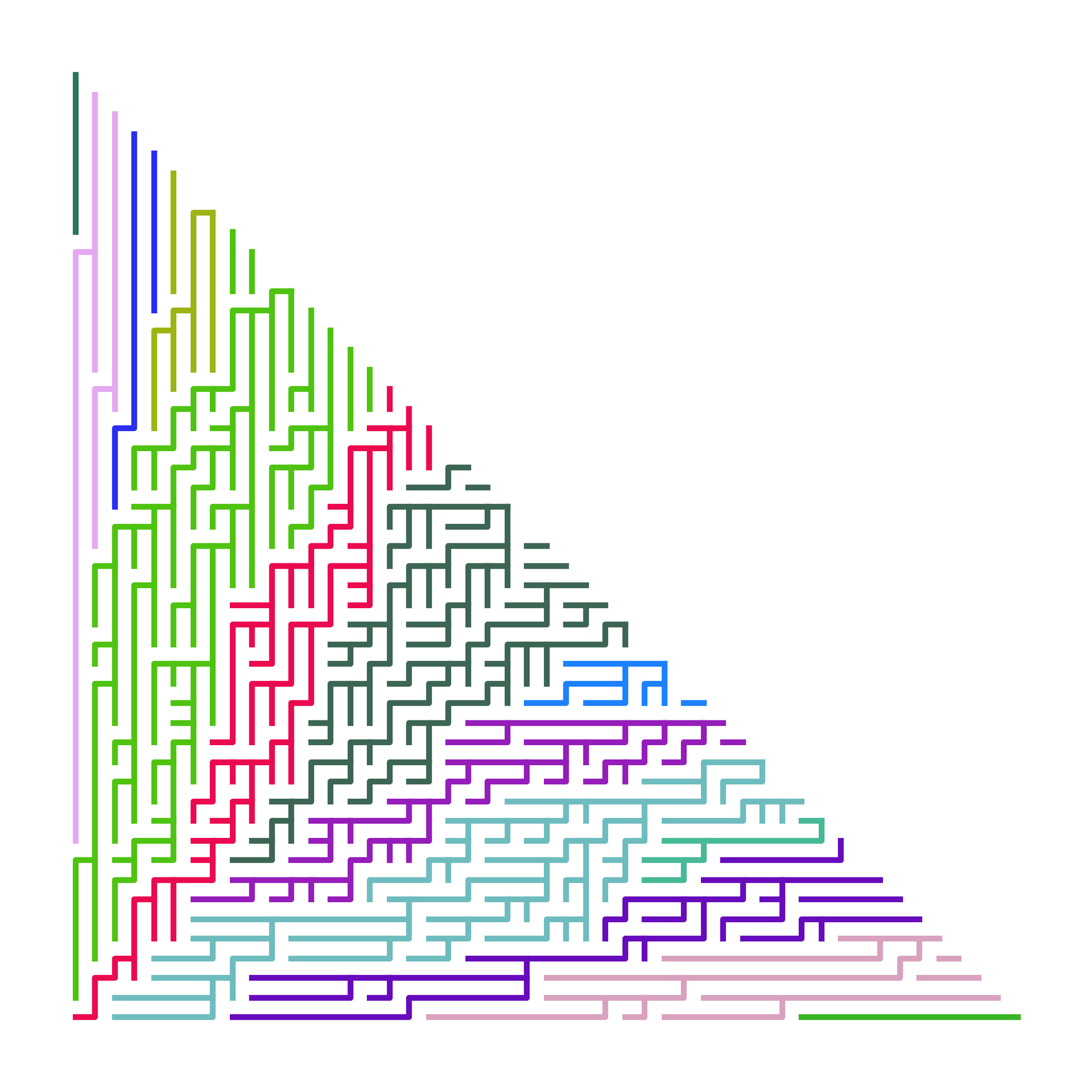}
      \hfill
      \includegraphics[width=0.49\textwidth]{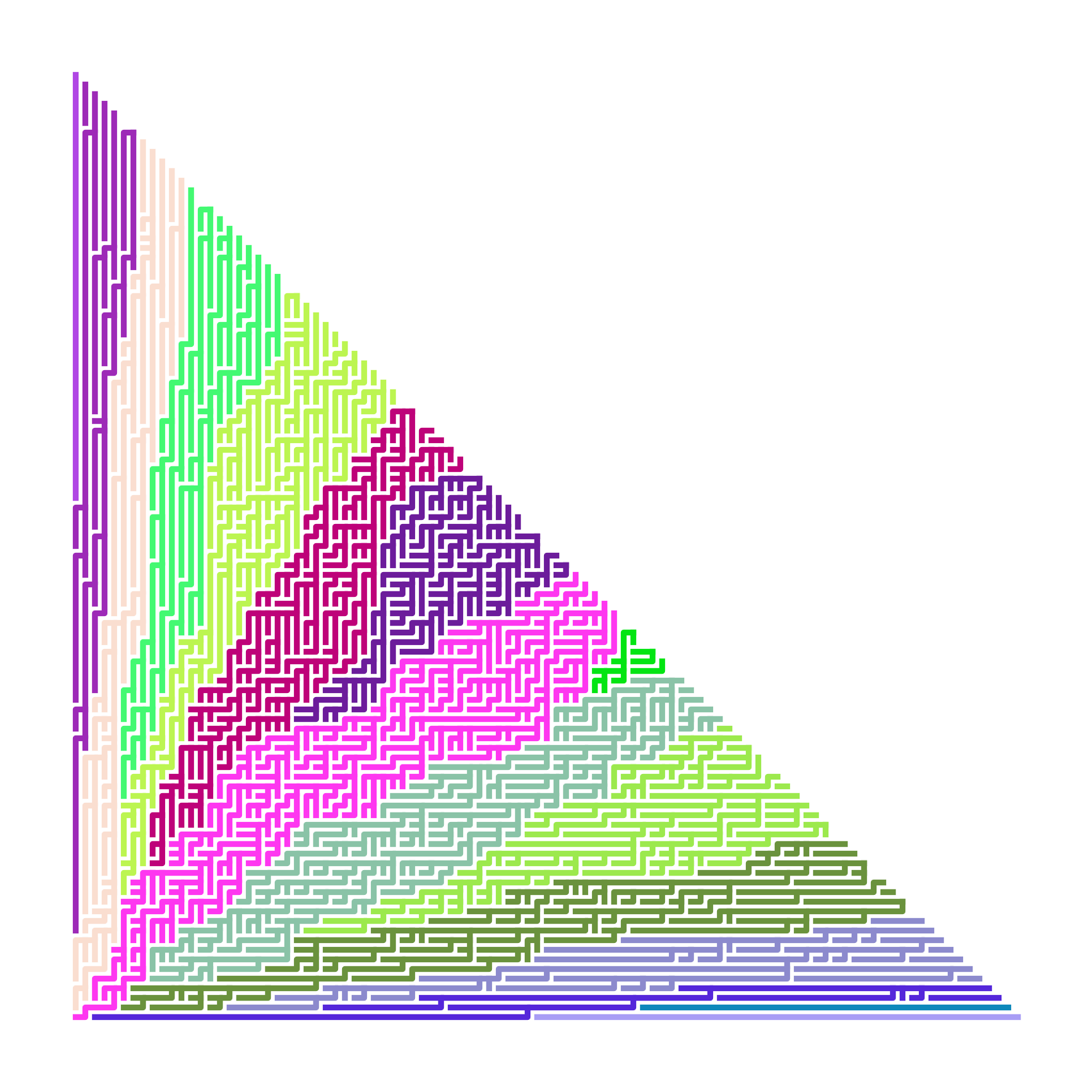}
      \caption{Two samples of the Pólya Web until different times (50 and 100). Two path have the same colour if they met in finite time (they might have met at a later point, not included in the figure).}
      \label{fig:samples_of_the_polya_web}
    \end{figure}
    One can notice that this construction is somewhat similar to the Random Walk Web introduced in \cite{toth_wendelin:1998} but the simple symmetric random walks are replaced with Pólya Walks.
    We do not need to mention that the Random Walk Web on its own is a very rich structure which lead to many interesting studies in the past and recently (a recent example is \cite{cannizzaro_hairer:2023}).
    Just to mention the Brownian Web and the Brownian Net.
    This makes the Pólya Web even more interesting. It is a more complicated structure compared to the Random Walk Web since the coalescing walks do not have the time homogeneity property.

    Finally notice that both the Random Walk Web and the Pólya Web are special cases of a more general web of upright oriented coalescing random walks.
    In winch at each point of a the positive quadrant of the integer lattice we decide about the next step by coin toss depending on the current point, and independently of the previous tosses.
    In the case of the Random Walk Web we toss an unbiased coin, while in the Pólya Web we toss a biased coin according to the probabilities of the transition kernel in \cref{eq:polya_urn_kernel}.
    Therefore, it is worth studying some properties of this more general web.
    First, we start with this and later we move on to study the concrete case of the Pólya Web.
    The basic probabilistic tools we use without citation can be found in \cite{durett:2010} and \cite{williams:1991}.
    \markright{\thesection. Introduction and motivation}

    \newpage

    \section{The web of upright oriented coalescing random walks}
    \thispagestyle{plain}
    We introduce the web of upright oriented coalescing random walks.
    The Pólya Web is a special type of this more general web.
    First we only studied the properties of the Pólya Web.
    However, it has turned out that many of our results expand to a more general type of web.
    In this section we state the basic definition of the web with some initial geometric property.

    \subsection{The web}
    Let us consider the sets
    \[
    \Lambda \defeq \N \times \N
    \qquad \text{and} \qquad
    \Omega \defeq \left\{(1,0), (0,1)\right\}^{\Lambda}.
    \]
    Let us introduce the following notation for the coalescing random walks in the general web.
    By $\|\cdot\|$ we denote the 1-norm of the vector. That is, $\norm{(a,b)} = |a|+|b|$. Therefore, for some $(k,l) = \lambda \in \Lambda$ it is just the sum of the coordinates: $\|\lambda\| = k + l$.
    A walk started from $\lambda \in \Lambda$ is defined as the following process
    \[
    S_\lambda(\|\lambda\|) \defeq \lambda \qquad \text{and} \qquad
    S_\lambda(n+1) \defeq S_{\lambda}(n) + \omega_{S_\lambda(n)} \quad (n \geq \| \lambda\|).
    \]
    Therefore, the walk starts at the point $\lambda$ and at step $n+1$ it takes a step in the direction of the currently allowed step $\omega_{S_\lambda(n)} \in \left\{(1,0),(0,1)\right\}$.
    Important to notice, that we use a universal time.
    That is time $n$ means the line $x+y=n$ on the plane. Also notice that by the definition the random walk started from $\lambda \in \Lambda$ is born at time $n = \| \lambda\|$.
    
    Then $\Omega$ is the set of all possible upright oriented coalescing walks.
    Therefore, if two walks started from different points of $\Lambda$ following the trajectory of some $\omega \in \Omega$ meet at a certain point, they will have the same path for the rest of the trajectory.

    Now consider independent Bernoulli measures at each point of $\Lambda$.
    Therefore, for every $\lambda \in \Lambda$ for some $0 \leq p_\lambda \leq 1$, let
    \[
    \mathbb{P}_\lambda((1,0)) = 1-\mathbb{P}_\lambda((0,1)) = p_\lambda.
    \]
    We define our probability measure of $\Omega$ as the direct product of the Bernoulli measures $\mathbb{P}_\lambda$,
    \[
    \mathbb{P} \defeq \bigotimes_{\lambda \in \Lambda} \mathbb{P}_\lambda.
    \]
    In this way we get a random web of upright oriented coalescing random walks in the positive quadrant of the lattice $\N \times \N$ (\Cref{fig:sample_general_web}).

    \begin{figure}[H]
      \centering
      \includegraphics[width=0.49\textwidth]{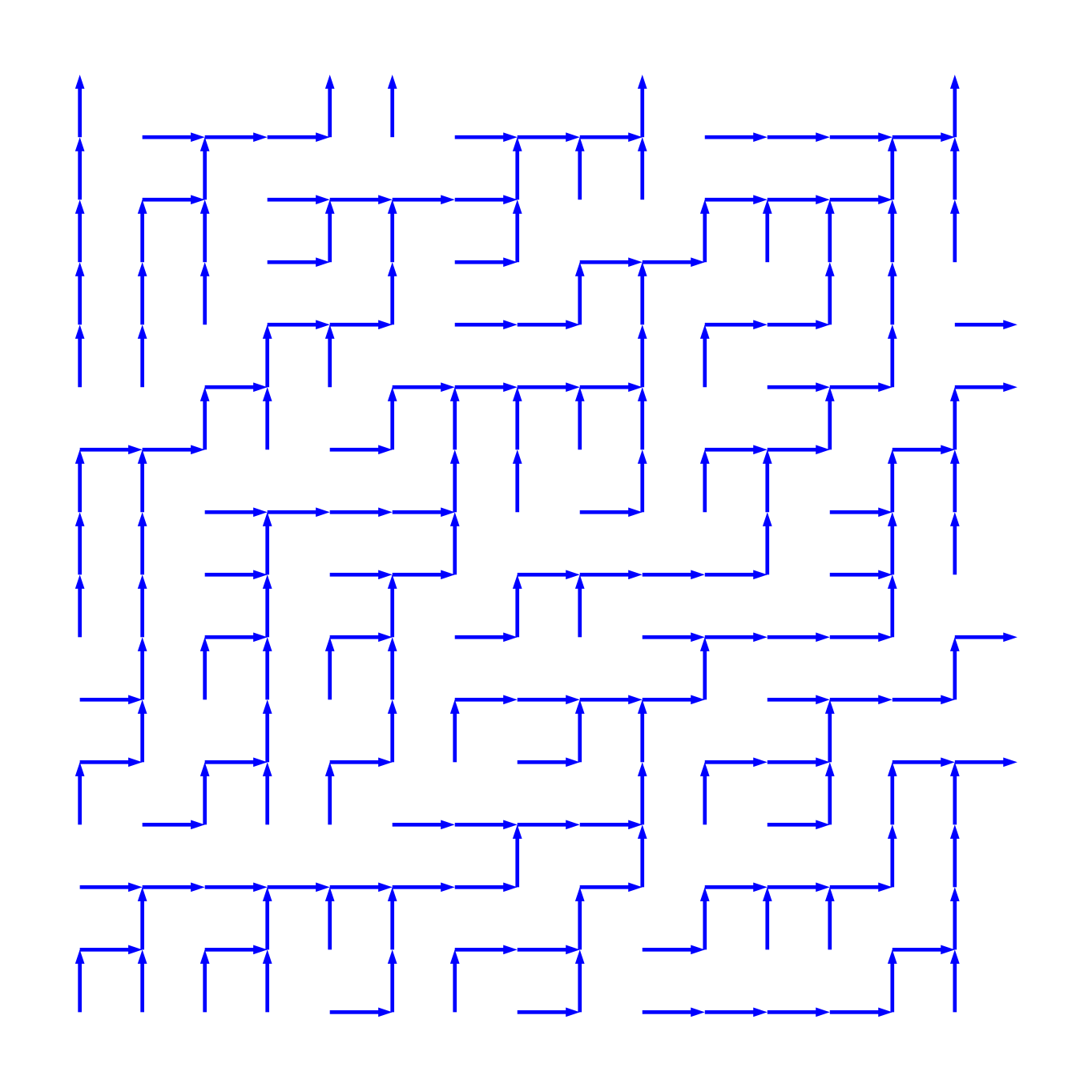}
      \caption{A sample of the web of upright oriented coalescing random walks.}
      \label{fig:sample_general_web}
    \end{figure}

    The Pólya Web is a certain special type of this web. Namely when we choose for $(k,l) = \lambda \in \Lambda$
    \[
    p_{(k,l)} = \frac{k}{k+l},
    \]
    while in the case of the Random Walk Web we have
    \[
    p_{(k,l)} = \frac{1}{2}.
    \]

    \subsection{The dual web}
    We define the dual web of the upright oriented coalescing random walks.
    Not surprisingly it will be a web of down-left oriented coalescing random walks.
    Let the points of the dual web be
    \[
    \Lambda_D \defeq \N \times \N + \left(\frac{1}{2},\frac{1}{2}\right).
    \]
    We define the dual web on $\Lambda_D$ so that a realization of the original web determines a realization of the dual web and conversely.
    That is for any $\lambda \in \Lambda$ and $\omega \in \Omega$ let for $\lambda_D = \lambda + \left(\frac{1}{2},\frac{1}{2}\right) \in \Lambda_D$
    \[
    \omega_{\lambda_D} \defeq -\omega_\lambda.
    \]
    That is if we take a step to the right at $\lambda$ we take a step to the left at $\lambda_D$ in the dual web. Similarly a step up at $\lambda$ determines a step down at $\lambda_D$ (\Cref{fig:arrows_on_the_web_and_its_dual}).

    \begin{figure}[H]
      \centering
      \includegraphics[width=0.45\textwidth]{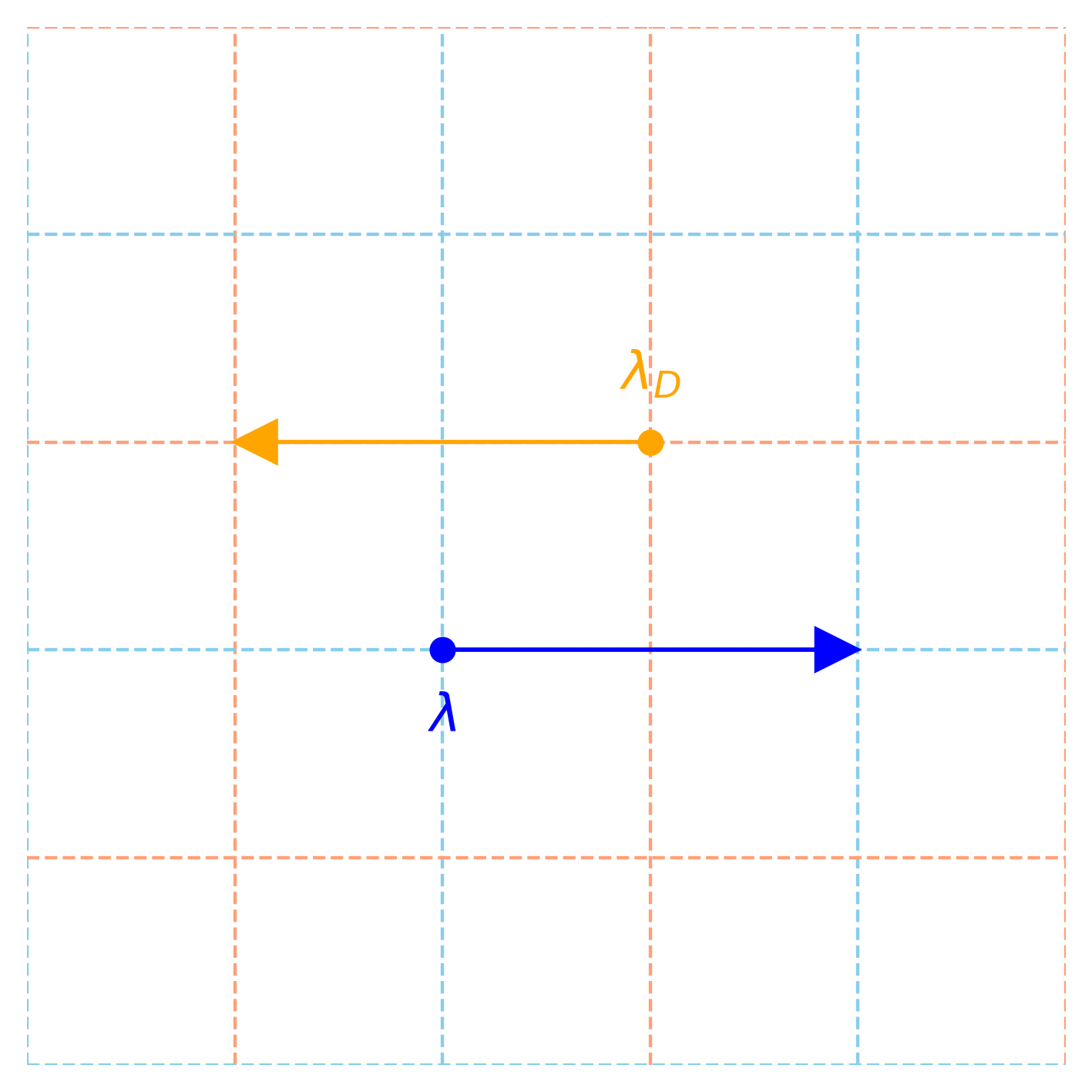}
      \hfill
      \includegraphics[width=0.45\textwidth]{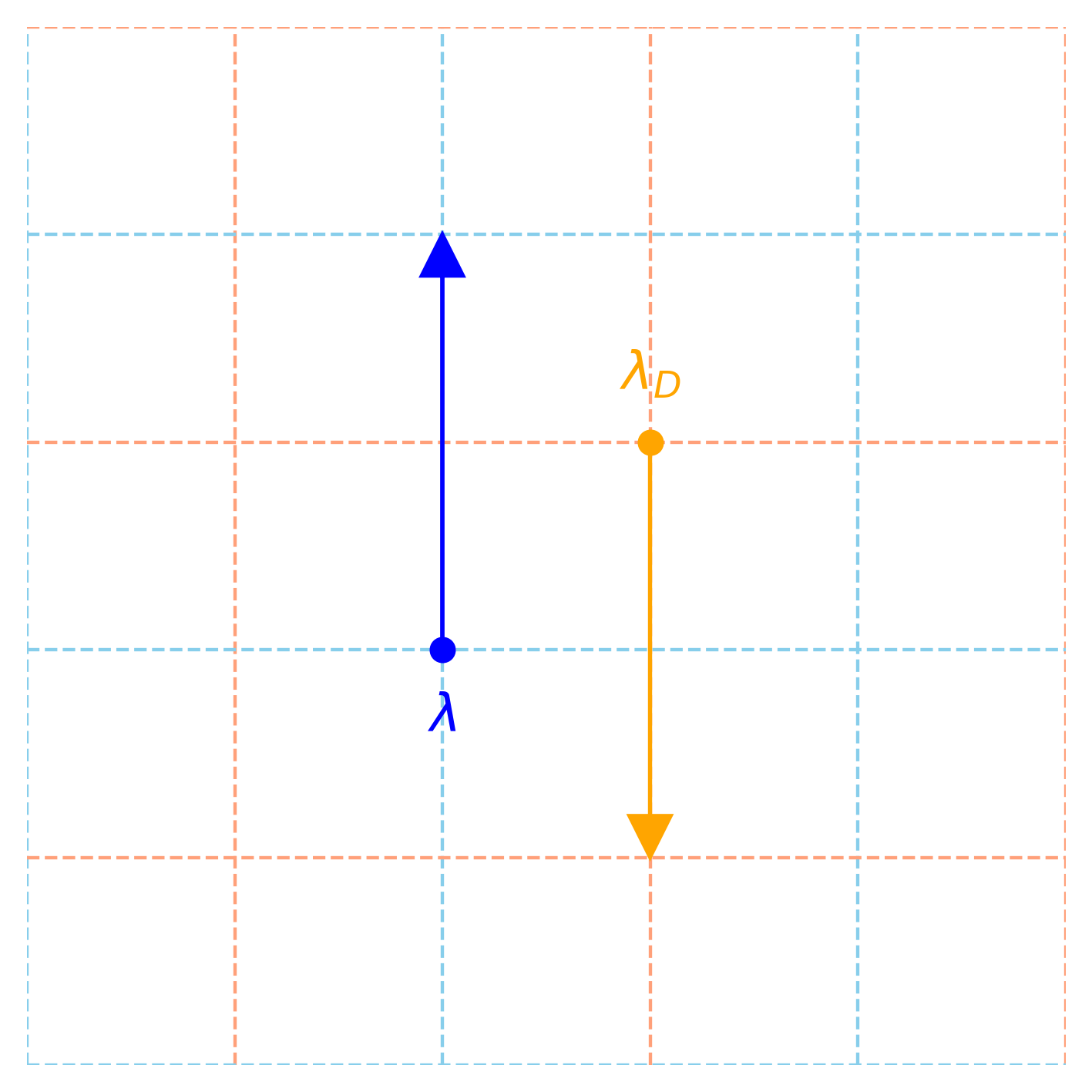}
      \caption{The two possible realizations at a point $\lambda$ of the web (blue) and the corresponding dual point $\lambda_D$ (orange).}
      \label{fig:arrows_on_the_web_and_its_dual}
    \end{figure}
    \begin{figure}[H]
      \centering
      \includegraphics[width=0.75\textwidth]{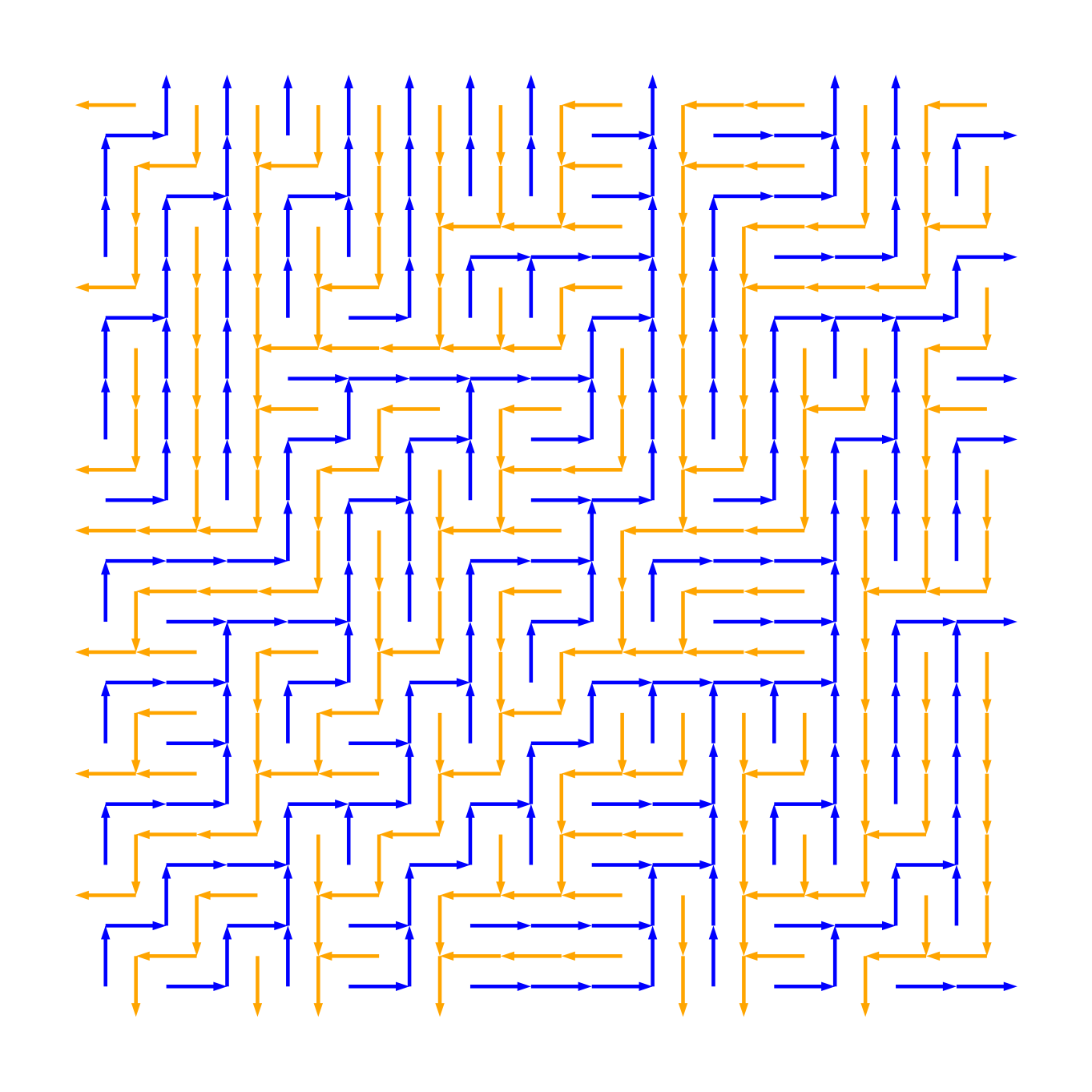}
      \caption{A sample of the web (blue) and its dual (orange).}
      \label{fig:sample_with_the_dual}
    \end{figure}

    We can similarly define the coalescing random walks on the dual lattice.
    Namely for $\lambda_D \in \Lambda_D$ let for $n \leq \norm{\lambda_D}$
    \[
    T_{\lambda_D}(\|\lambda\|) \defeq \lambda_D \qquad \text{and} \qquad
    T_{\lambda_D}(n-1) \defeq T_{\lambda_D}(n) + \omega_{T_{\lambda_D}(n)},
    \]
    until it crosses the $x$-axis or the $y$-axis. After that it is not defined.
    Notice that the dual walks only run for finite time since after some time they reach the boundary of the web and "exit" (\Cref{fig:sample_with_the_dual}).
    
    \subsection{The geometry of the web and its dual}
    The propositions of this subsection are trivial on their own, and their proofs are a bit unnecessarily formal. Yet we present them. Most of them should be clear from the picture of the web and its dual (\Cref{fig:sample_with_the_dual}).
    Therefore, we advise to skip the proofs if the propositions seem to be straightforward.
    
    An interesting question is to decide if two walks started from different points meet in finite time or not.
    We will investigate this issue later.
    First let us introduce a partial order on the sets $\Lambda$ and $\Lambda_D$.
    This will be a great help in the upcoming sections.
    \begin{dfn} \label{dfn:partial_order_on_Lambda}
        For any $(k_1,l_1) = \lambda_1, (k_2,l_2) = \lambda_2 \in \R^2$ we say that
        \[
        \lambda_1 \succeq \lambda_2,
        \]
        if
        \[
        k_1 \leq k_2 \qquad \text{and} \qquad l_1 \geq l_2.
        \]
    \end{dfn}
    Moreover, we use the notation $\lambda_1 \succ \lambda_2$, if at least one of the inequalities in \Cref{dfn:partial_order_on_Lambda} is strict (i.e. $\lambda_1 \neq \lambda_2$).
    And easy observation is the following proposition.
    \begin{prp} \label{prp:partial_order}
        $(\R^2,\succeq)$ is a partially ordered set.
    \end{prp}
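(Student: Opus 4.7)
The plan is to verify directly the three defining axioms of a partial order, namely reflexivity, antisymmetry, and transitivity, by reducing each one to the corresponding (already known) property of the usual total order $\leq$ on $\R$. Indeed, \Cref{dfn:partial_order_on_Lambda} declares $\lambda_1 \succeq \lambda_2$ via two coordinate-wise inequalities in $\R$, so every axiom for $\succeq$ will follow from the same axiom applied twice to $\leq$.

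Concretely, for reflexivity I would fix $\lambda = (k,l) \in \R^2$ and note that the inequalities $k \leq k$ and $l \geq l$ both hold, hence $\lambda \succeq \lambda$. For antisymmetry, suppose $\lambda_1 \succeq \lambda_2$ and $\lambda_2 \succeq \lambda_1$ with $\lambda_i = (k_i,l_i)$; then from the first relation $k_1 \leq k_2$ and $l_1 \geq l_2$, and from the second $k_2 \leq k_1$ and $l_2 \geq l_1$, whence $k_1 = k_2$ and $l_1 = l_2$, i.e. $\lambda_1 = \lambda_2$. For transitivity, given $\lambda_1 \succeq \lambda_2 \succeq \lambda_3$ with $\lambda_i = (k_i,l_i)$, I would chain the inequalities $k_1 \leq k_2 \leq k_3$ and $l_1 \geq l_2 \geq l_3$ via transitivity of $\leq$ on $\R$ to conclude $\lambda_1 \succeq \lambda_3$.

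There is no real obstacle here: the statement is exactly what the author calls out as being trivial and merely formal. The only thing to be careful about is to phrase the verification in terms of the slightly asymmetric definition (one coordinate flips direction), so that the reader sees at a glance that $\succeq$ inherits its order-theoretic structure from the product order on $\R \times \R^{\mathrm{op}}$ rather than from $\R \times \R$. After that, the proof is simply three one-line checks and can be closed immediately.
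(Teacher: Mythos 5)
Your verification of reflexivity, antisymmetry, and transitivity by reducing each to the corresponding property of $\leq$ on $\R$ (applied coordinate-wise, with one coordinate reversed) is correct and complete; the paper simply leaves this routine check to the reader. Nothing further is needed.
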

    \begin{proof}
        Easy to check. Left to the reader.
    \end{proof}
    The reason we defined the partial order on $\Lambda$ this way is that the trajectories of the random walks preserve the order.
    Formally it is stated in the following proposition.
    \begin{prp} \label{prp:partial_order_and_trajectories}
        For any $\lambda_1 \succeq \lambda_2 \in \Lambda$ and for any $n \geq \max\left\{ \|\lambda_1\|, \| \lambda_2\|\right\}$
        \[
        S_{\lambda_1}(n) \succeq S_{\lambda_2}(n).
        \]
        Moreover, for any $\lambda_{D1} \succeq \lambda_{D2} \in \Lambda_D$ and for any $1 \leq n \leq \min\left\{ \|\lambda_{D1}\|, \| \lambda_{D2}\|\right\}$
        \[
        T_{\lambda_{D1}}(n) \succeq T_{\lambda_{D2}}(n).
        \]
    \end{prp}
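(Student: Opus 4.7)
The plan is to prove both halves of the proposition by induction on time $n$, using the fact that the two walks under comparison are driven by the \emph{same} environment $\omega$. The key consequence of this shared environment is that whenever two trajectories meet at a lattice point they agree from that time onward (coalescence), so the only non-trivial case in the induction step is when the two walks are still distinct at time $n$.

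For the primal statement I would first reduce to the case $\|\lambda_1\|\leq\|\lambda_2\|$ (the reverse is symmetric) and establish the base case at $n_0=\|\lambda_2\|$. Writing $\lambda_i=(k_i,l_i)$, we have $S_{\lambda_2}(n_0)=\lambda_2$ while $S_{\lambda_1}(n_0)=(k_1+a,\,l_1+b)$ for some $a,b\geq 0$ with $a+b = \|\lambda_2\|-\|\lambda_1\| = (k_2-k_1)-(l_1-l_2)$. Since $l_1\geq l_2$, this bound gives $a\leq k_2-k_1$, hence $k_1+a\leq k_2$; and $b\geq 0$ together with $l_1\geq l_2$ gives $l_1+b\geq l_2$. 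Thus $S_{\lambda_1}(n_0)\succeq S_{\lambda_2}(n_0)$.

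The inductive step from $n$ to $n+1$ then splits into two cases. If $S_{\lambda_1}(n)=S_{\lambda_2}(n)$, both walks read off the same $\omega$ and take the same step, so the order is trivially preserved. If the two points are distinct, then because they share the universal time $n$ and are $\succeq$-comparable, the inequality must be strict in \emph{both} coordinates simultaneously; this provides a slack of at least one unit in each coordinate, which absorbs any of the four combinations of next steps from $\{(1,0),(0,1)\}$, as one checks directly. The dual claim is obtained by the same argument run backwards in time: at the base time $n_0=\min\{\|\lambda_{D1}\|,\|\lambda_{D2}\|\}$, one dual walk has just been born while the other has already taken some down-left steps, and an analogous short arithmetic check gives the base inequality; the induction then proceeds from $n$ to $n-1$ with the same coalesce-or-strictly-separated dichotomy.

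The only mildly delicate point is the bookkeeping in the unequal-birth-time base case, where the head start of the older walk has to be controlled; everything else reduces to a $2\times 2$ case check on pairs of possible next steps, which is precisely why the proposition looks obvious from \Cref{fig:sample_with_the_dual}.
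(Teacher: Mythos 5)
Your proposal is correct and follows essentially the same route as the paper's proof: reduce to $\|\lambda_1\|\leq\|\lambda_2\|$, check the order at the common birth time, and then induct using the dichotomy between coalesced walks (which take identical steps) and strictly ordered walks (which, at equal universal time, are strictly separated in both coordinates, so one unit of slack absorbs any pair of next steps). You are in fact slightly more explicit than the paper on the base case, which it dismisses with ``it is easy to see.''
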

    \begin{proof}
    Without loss of generality, we can suppose $\norm{\lambda_1} \leq \norm{\lambda_2}$.
    Then it is easy to see that
    \[
    S_{\lambda_1}(\norm{\lambda_2}) \succeq S_{\lambda_2}(\norm{\lambda_2}).
    \]
    If we have $S_{\lambda_1}(n) = S_{\lambda_2}(n)$ for some $n \geq \max\left\{ \|\lambda_1\|, \| \lambda_2\|\right\}$ then by definition of the walks
    \[
    S_{\lambda_1}(n+1) = S_{\lambda_2}(n+1).
    \]
    If we have $S_{\lambda_1}(n) \succ S_{\lambda_2}(n)$ for some $n \geq \max\left\{ \|\lambda_1\|, \| \lambda_2\|\right\}$ , then by definition
    \[
    S_{\lambda_1}(n+1)  - S_{\lambda_2}(n+1)= S_{\lambda_1}(n) - S_{\lambda_2}(n) + \omega_{S_{\lambda_1}(n)} - \omega_{S_{\lambda_2}(n)},
    \]
    where $S_{\lambda_1}(n) - S_{\lambda_2}(n)$ has first coordinate at most minus one and second coordinate at least one. Moreover, the possible values of $\omega_{S_{\lambda_1}(n)} - \omega_{S_{\lambda_2}(n)}$ are $(0,0),(1,-1)$ and $(-1,1)$. It follows that
    \[
    S_{\lambda_1}(n+1) \succeq S_{\lambda_2}(n+1).
    \]
    The argument for the dual walks is very similar.
    \end{proof}
    
    Clearly just by definition the walks on the web and the walks on the dual web can never cross each other (easy to see on \Cref{fig:arrows_on_the_web_and_its_dual}). This fact stated formally is the following.
    \begin{prp}\label{prp:the_dual_can_never_intersect}
        For any $\lambda \in \lambda$ and $N \geq \norm{\lambda}$ if $\lambda_{D_1} \succ S_\lambda(N) \succ \lambda_{D2}$, then for any $\norm{\lambda} \leq n \leq N$
        \[
        T_{\lambda_{D1}}(n) \succ S_{\lambda}(n) \succ T_{\lambda_{D2}}(n).
        \]
    \end{prp}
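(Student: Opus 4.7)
The plan is a backward induction on $n$, going from $N$ down to $\|\lambda\|$. The base case $n = N$ coincides with the hypothesis (using that $T_{\lambda_{D1}}(N)$ and $T_{\lambda_{D2}}(N)$ agree with $\lambda_{D1}$ and $\lambda_{D2}$ at their birth time, or more generally using \Cref{prp:partial_order_and_trajectories} for the dual walks to carry $\succ$ down from the starting times to $N$). In the inductive step I would treat the two inequalities $T_{\lambda_{D1}}(n) \succ S_\lambda(n)$ and $S_\lambda(n) \succ T_{\lambda_{D2}}(n)$ separately by a symmetric argument; I describe only the first.

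For the inductive step I would write $T_{\lambda_{D1}}(n) = (a+\tfrac12,b+\tfrac12)$ and $S_\lambda(n) = (c,d)$ with $a+b+1 = c+d = n$, so that the hypothesis $T_{\lambda_{D1}}(n) \succ S_\lambda(n)$ is equivalent to $a \le c-1$ and $b \ge d$. The vector $\vec d(m) := T_{\lambda_{D1}}(m) - S_\lambda(m)$ evolves from time $n$ to time $n-1$ by
\[
\vec d(n-1) \;=\; \vec d(n) \;+\; \omega_{S_\lambda(n-1)} \;-\; \omega_{(a,b)},
\]
since $S_\lambda(n-1) = S_\lambda(n) - \omega_{S_\lambda(n-1)}$ and, by definition of the dual arrows, $T_{\lambda_{D1}}(n-1) = T_{\lambda_{D1}}(n) - \omega_{(a,b)}$. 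The increment $\omega_{S_\lambda(n-1)} - \omega_{(a,b)}$ lies in $\{(0,0),(1,-1),(-1,1)\}$.

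I would then check case by case that $\vec d(n-1)$ still has non-positive first coordinate and non-negative second coordinate. The cases $(0,0)$ and $(-1,1)$ are immediate from $\vec d(n)$ already having first coordinate $\le -\tfrac12$ and second coordinate $\ge \tfrac12$. The only delicate case is $(1,-1)$ when additionally $a = c-1$ (so that the first coordinate of $\vec d(n)$ equals exactly $-\tfrac12$); this is where strictness could potentially fail. But the change $(1,-1)$ forces $\omega_{S_\lambda(n-1)} = (1,0)$, which in turn forces $S_\lambda(n-1) = (c-1,d)$; and $a = c-1$, $b = d$ give $(a,b) = (c-1,d) = S_\lambda(n-1)$. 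Then the same arrow $\omega_{(c-1,d)}$ would have to be simultaneously $(1,0)$ and $(0,1)$, a contradiction.

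This ``shared-arrow contradiction'' is the heart of the argument and I expect it to be the only non-routine step; the rest is bookkeeping. The geometric meaning is that a primal walk and a dual walk cannot swap sides because at the moment they would cross they are forced to consult the very same Bernoulli arrow, and that single arrow sends them to compatible sides. The second inequality $S_\lambda(n) \succ T_{\lambda_{D2}}(n)$ is handled by the same analysis with the roles of primal and dual reversed; the ``dangerous'' configuration there is $T_{\lambda_{D2}}(n) = (c+\tfrac12,d-\tfrac12)$ and $S_\lambda(n-1) = (c,d-1)$, again forcing the same arrow to take two values at once.
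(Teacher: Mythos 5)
Your proof is correct and follows essentially the same route as the paper's: a backward step-by-step comparison in which the only configuration that could produce a crossing is the one where the dual point at time $n$ sits at $S_\lambda(n-1)+(1/2,1/2)$, and there the primal and dual steps are read off the very same arrow $\omega_{S_\lambda(n-1)}$, which rules the crossing out. Your write-up is more explicit in tracking the difference vector and enumerating increments, but the key shared-arrow observation is identical to the paper's.
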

    \begin{proof}
    We show the upper bound. The lower bound can be shown similarly.
    Notice that they can only cross each other at time $n$ if
    \[
    T_{\lambda_{D1}}(n) = S_{\lambda}(n-1) + (1/2,1/2)
    \]
    since only one of their coordinates will be changed by one step.
    However, then
    \begin{align*}
        T_{\lambda_{D1}}(n-1) &= T_{\lambda_{D1}}(n) - \omega_{S_{\lambda}(n-1)}\\
        S_{\lambda_{D1}}(n-1) &= S_{\lambda_{D1}}(n) - \omega_{S_{\lambda}(n-1)},
    \end{align*}
    then it follows
    \[
    T_{\lambda_{D1}}(n-1) = T_{\lambda_{D1}}(n) - \omega_{S_{\lambda}(n-1)} \succ
    S_{\lambda}(n) - \omega_{S_{\lambda}(n-1)} = S_{\lambda}(n-1).
    \]
    Therefore, even in this case they will not intersect in the previous level.
    \end{proof}
    
    Finally we state an important lemma.
    It connects the paths of the walks on the web and its dual.
    Namely, two coalescing walks started from different points are in different points in the given time if there is a duals walk separating them.
    It is a very easy fact to see, but we present a formal proof.
    \begin{lmm} \label{lmm:paths_on_the_web_and_its_dual}
        For any $\lambda_1 \succeq \lambda_2 \in \Lambda$ and $N \geq \max\left\{ \|\lambda_1\|, \| \lambda_2\|\right\}$
        \[
        S_{\lambda_1}(N) \succ S_{\lambda_2}(N)
        \]
        if and only if there is a $\lambda_D \in \Lambda_D$ such that $\norm{\lambda_D} = N$
        and for any $\max\left\{ \|\lambda_1\|, \| \lambda_2\|\right\} \leq n \leq N$
        \[
        S_{\lambda_1}(n) \succ T_{\lambda_D}(n) \succ S_{\lambda_2}(n).
        \]
    \end{lmm}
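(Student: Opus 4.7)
The $(\Leftarrow)$ direction is immediate: evaluating the assumed sandwich at $n=N$ gives $S_{\lambda_1}(N) \succ T_{\lambda_D}(N) \succ S_{\lambda_2}(N)$, and transitivity of $\succ$ yields $S_{\lambda_1}(N) \succ S_{\lambda_2}(N)$. The real content is the $(\Rightarrow)$ direction, for which my plan is to explicitly produce a separating dual starting point at time $N$ and then propagate the separation backward in time using the non-crossing principle of \Cref{prp:the_dual_can_never_intersect}.

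For the construction, I first observe that since $S_{\lambda_1}(N)$ and $S_{\lambda_2}(N)$ both lie on the diagonal $x+y=N$, the strict relation $S_{\lambda_1}(N) \succ S_{\lambda_2}(N)$ forces \emph{both} coordinate inequalities to be strict, with a common positive integer gap. Writing $S_{\lambda_1}(N) = (k_1,l_1)$, I would then take
\[
\lambda_D := \left(k_1 + \tfrac{1}{2},\ l_1 - \tfrac{1}{2}\right).
\]
The strict drop $l_1 > l_2 \geq 0$ forces $l_1 \geq 1$, so $\lambda_D \in \Lambda_D$ with $\|\lambda_D\| = N$; and the coordinate gap of at least one between $S_{\lambda_1}(N)$ and $S_{\lambda_2}(N)$ immediately yields $S_{\lambda_1}(N) \succ \lambda_D \succ S_{\lambda_2}(N)$.

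To propagate this sandwich backward, I invoke the non-crossing argument of \Cref{prp:the_dual_can_never_intersect} separately on each pair $(S_{\lambda_i},T_{\lambda_D})$: the very same reasoning—that a primal and a dual walk can only coincide at adjacent points, where the coupling $\omega_{\lambda_D} = -\omega_{\lambda}$ forces them to step in tandem and thus preserves their partial-order position—yields $S_{\lambda_1}(n) \succ T_{\lambda_D}(n) \succ S_{\lambda_2}(n)$ for every $\max\{\|\lambda_1\|,\|\lambda_2\|\} \leq n \leq N$. A convenient by-product is that the strict sandwich automatically pins $T_{\lambda_D}(n)$ inside $\Lambda_D$ throughout this range (its coordinates are wedged between those of two $\N\times\N$-valued walks), so the dual walk stays well-defined and never exits through an axis before time $\max\{\|\lambda_1\|,\|\lambda_2\|\}$. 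The only real obstacle is this backward preservation of strictness, which \Cref{prp:the_dual_can_never_intersect} already handles; everything else is routine bookkeeping on the partial order.
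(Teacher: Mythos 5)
Your proof is correct. The forward direction follows the paper's strategy almost verbatim: place a dual starting point immediately adjacent to one of the two walks' positions at time $N$, check that it is wedged strictly between them, and propagate the sandwich backward in time with the non-crossing property of \Cref{prp:the_dual_can_never_intersect} applied to each pair separately. The only difference there is the anchor: you take $\lambda_D = S_{\lambda_1}(N) + \left(\tfrac{1}{2},-\tfrac{1}{2}\right)$ while the paper takes $S_{\lambda_2}(N) + \left(\tfrac{1}{2},-\tfrac{1}{2}\right)$; your choice is in fact the safer one, since the strict drop $l_1 > l_2 \geq 0$ guarantees $l_1 \geq 1$ and hence $\lambda_D \in \Lambda_D$, whereas the paper's point falls outside $\Lambda_D$ in the boundary case where $S_{\lambda_2}(N)$ sits on the $x$-axis. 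Where you genuinely diverge is the converse. The paper argues by contraposition: assuming $S_{\lambda_1}(N)=S_{\lambda_2}(N)$, it exhibits the two dual points flanking the common endpoint and shows, via monotonicity of the dual trajectories, that every $\tilde{\lambda}_D$ at level $N$ stays strictly on one side of both walks, so no separator can exist. You instead read the hypothesized sandwich at $n=N$, use $T_{\lambda_D}(N)=\lambda_D$, and conclude by transitivity of $\succ$ --- a one-line argument that proves exactly the stated implication and is noticeably lighter than the paper's case analysis. Your closing remark, that the sandwich keeps the dual walk's coordinates wedged between those of two $\N\times\N$-valued walks and hence well-defined down to time $\max\left\{\|\lambda_1\|,\|\lambda_2\|\right\}$, addresses a well-definedness point the paper leaves implicit.
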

    \begin{proof}
    Suppose that $S_{\lambda_1}(N) =(a_1,b_1) \succ (a_2,b_2) = S_{\lambda_2}(N)$. Then for
    $\lambda_D = (a_2,b_2) + (1/2,-1/2) \in \Lambda_D$ clearly $\norm{\lambda_D} = N$ and
    \[
    (a_1,b_1) \succ \lambda_{D} \succ (a_2,b_2).
    \]
    Therefore, since the walks on the web and the dual cannot cross each other by \Cref{prp:the_dual_can_never_intersect}, for any $\max\left\{ \|\lambda_1\|, \| \lambda_2\|\right\} \leq n \leq N$
    \[
    S_{\lambda_1}(n) \succ T_{\lambda_D}(n) \succ S_{\lambda_2}(n).
    \]

    Suppose that $S_{\lambda_1}(N) = S_{\lambda_2}(N) = (a,b)$. Then for $\lambda_{D1} = (a,b) + (1/2,-1/2)$ and $\lambda_{D_2} = (a,b)+(-1/2,1/2)$ we have $\norm{\lambda_{D1}} = \norm{\lambda_{D2}} = N$ and
    \[
    \lambda_{D1} \succ (a,b) \succ \lambda_{D2}.
    \]
    Then by \Cref{prp:partial_order_and_trajectories} and again the fact that walks on the web and its dual cannot cross paths for any $\tilde{\lambda}_{D1} \succeq \lambda_{D1}$ and 
    $\lambda_{D2} \succeq \tilde{\lambda}_{D2}$ for which $\norm{\tilde{\lambda}_{D1}}=\norm{\tilde{\lambda}_{D2}} = N$ we have for any
    $\max\left\{ \|\lambda_1\|, \| \lambda_2\|\right\} \leq n \leq N$
    \[
    T_{\tilde{\lambda}_{D1}}(n) \succeq T_{\lambda_{D1}} \succ S_{\lambda_1}(n) \qquad \text{and} \qquad
    S_{\lambda_2}(n) \succ T_{\lambda_{D2}} \succeq T_{\tilde{\lambda}_{D2}}(n).
    \]
    Moreover, notice there are no other $\tilde{\lambda}_{D} \in \Lambda_D$ such that $\norm{\lambda_D} = N$.

    \end{proof}
    
    \newpage

    \section{The strong law for the number of components} \label{section_3}
    \thispagestyle{plain}
    In this section we investigate the number of components of the infinite random graph which we get by starting upright oriented coalescing random walks from all the points $\lambda \in \Lambda$ with $\norm{\lambda} \leq n$.
    Let us denote the number of components of this graph by $C_n$.
    First, observe that since the walks are coalescing, it is enough to consider points $\lambda \in \Lambda$ for which $\norm{\lambda} = n$.
    We get the same $C_n$ this way (\Cref{fig:components_general}).

    \begin{figure}[H]
      \centering
      \includegraphics[width=0.6\textwidth]{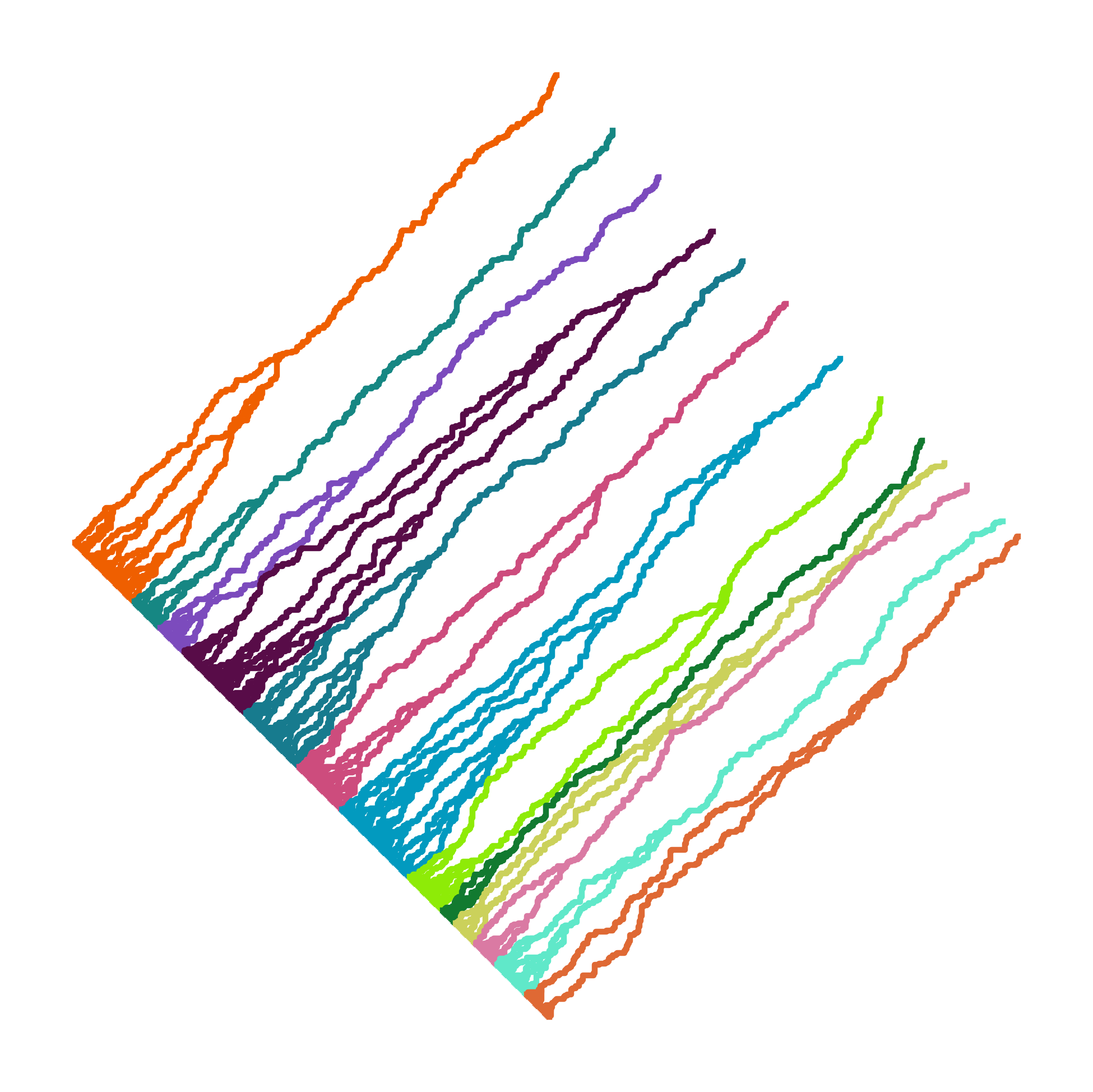}
      \caption{Coalescing random walks started from the same time. The different colours denote different components.}
      \label{fig:components_general}
    \end{figure}
    
    We have an exact formula for $C_n$, but first we need some notations.
    Therefore, let us introduce for $\lambda_1, \lambda_2 \in \Lambda$ the time of the meeting until time $n$
    \[
    \tau_{\lambda_1,\lambda_2}(n) \defeq
    \inf\left\{k \in \N: \;\max\left\{\| \lambda_1 \|, \|\lambda_2\|\right\} \leq k \leq n : \; S_{\lambda_1}(k) = S_{\lambda_2}(k)\right\},
    \]
    with the convention $\inf \emptyset = \infty$.
    Moreover, let
    \[
    \tau_{\lambda_1,\lambda_2} \defeq
    \inf\left\{k \in \N : \;k \geq \max\left\{\| \lambda_1 \|, \|\lambda_2\|\right\} : \; S_{\lambda_1}(k) = S_{\lambda_2}(k)\right\},
    \]
    the time of the meeting in finite time.
    Notice that both $\tau_{\lambda_1,\lambda_2}(n)$ and $\tau_{\lambda_1,\lambda_2}$ take values in $\N\cup \left\{ \infty \right\}$.
    As said before, we are interested in the probabilities
    \[
    \valseg{\tau_{\lambda_1,\lambda_2} < \infty} = 1 - \valseg{\tau_{\lambda_1,\lambda_2} = \infty}.
    \]
    A straightforward observation is that $\tau_{\lambda_1,\lambda_2}(n+1) \leq \tau_{\lambda_1,\lambda_2}(n)$ and thus the events $\left\{\tau_{\lambda_1,\lambda_2}(n) < \infty\right\}$ are increasing in $n$. Moreover,
    \[
    \left\{\tau_{\lambda_1,\lambda_2} < \infty\right\} = \bigcup_{n \geq \max\left\{\| \lambda_1 \|, \|\lambda_2\|\right\}} \left\{\tau_{\lambda_1,\lambda_2}(n) < \infty\right\}.
    \]
    Therefore, by the continuity of the probability measure
    \begin{equation} \label{eq:tau_as_a_limit}
        \begin{split}
        \valseg{\tau_{\lambda_1,\lambda_2} < \infty} &= \lim_{n \to \infty}
        \valseg{\tau_{\lambda_1,\lambda_2}(n) < \infty},\\
        \valseg{\tau_{\lambda_1,\lambda_2} = \infty} &= \lim_{n \to \infty}
        \valseg{\tau_{\lambda_1,\lambda_2}(n) = \infty}.
        \end{split}
    \end{equation}

    Now observe that the following equality holds
    \begin{equation} \label{eq:number_of_components}
    C_n = 1 + \sum_{k=1}^{n} \mathds{1}\left[\tau_{\lambda_{k-1},\lambda_{k}} = \infty \right],
    \qquad \text{where} \; \lambda_{k} = (k,n-k).
    \end{equation}
    It is indeed true since there will be at least one component but we get exactly as many additional components as many neighbouring points do not meet in finite time.

    Now we want to investigate the asymptotic behaviour of $C_n$.
    The key is that the random variables in the sum of \cref{eq:number_of_components} are  \textit{negatively associated}.
    We show this property in the upcoming subsections.
    But first we need to recall an important theorem from percolation theory.

    \subsection{Application of the van den Berg-Kesten-Reimer inequality}
    Let us introduce the following notations.
    Fix an $N \in \N$ and let $\Lambda_N = \left\{\lambda \in \Lambda: \, \norm{\lambda} \leq N \right\}$.
    Let $\Omega_N = \left\{(1,0), (0,1)\right\}^{\Lambda_N}$. Moreover, consider $\mathbb{P}_N$ as the restriction of $\mathbb{P}$ on the set $\Omega_N$. That is, $ \mathbb{P}_N \defeq \mathbb{P}\left|_{\Omega_N} \right.$. Clearly, it is just the restriction of the original web until time $N$.
    Then for $\omega \in \Omega_N$ and $K \subseteq \Lambda_N$ let
    \begin{equation} \label{eq:def_of_restriction_operation}
    [\omega]_K \defeq
    \left\{\tilde{\omega} \in \Omega_N : \, \omega\left|_K \right. = \tilde{\omega}\left|_K  \right. \right\}.
    \end{equation}
    Moreover, for $A,B \subseteq \Omega_N$ let
    \[
    A \square B \defeq
    \left\{\omega \in \Omega_N : \, \text{exist $K,L \subseteq \Lambda_N$ disjoint such that $ [\omega]_{K} \subseteq A$ and $ [\omega]_{L} \subseteq B$} \right\}.
    \]
    We should imagine $A \square B$ as the event in which the events $A$ and $B$ can be tested on disjoint sets. We also want to emphasise that $\square$ is not a set operation since the geometry of the product space matters. We list some of the properties of the $\square$ operation.
    By definition it is clearly commutative
    \[
    A \square B = B \square A.
    \]
    In general it is not associative
    \[
    A \square (B \square C) \neq (A \square B) \square C \qquad \text{in general}.
    \]
    An example for which the equality does not hold can be easily constructed.
    Moreover, the following inclusions can be easily seen.
    \begin{equation} \label{eq:box_and_union}
        (A \square B) \cup (A \square D) \cup (B \square C) \cup (B \square D)\subseteq (A \cup B) \square (C \cup D)
    \end{equation}
    and
    \[
        (A \cap B) \square (C \cap D) \subseteq
        (A \square B) \cap (A \square D) \cap (B \square C) \cap (B \square D).
    \]

    Clearly by definition
    \begin{equation} \label{eq:basic_box_and_intersection}
    A \square B \subseteq A \cap B.
    \end{equation}
    In general the opposite inclusion does not hold.
    There are some special cases when the two are equal.
    Namely, if we introduce the order $(1,0) \leq (0,1)$, and the induced partial order on $\Omega_N$ ($(\omega_1,\dots,\omega_N) \leq (\tilde{\omega}_1,\dots \tilde{\omega}_N)$ if $\omega_j \leq \tilde{\omega}_j, \; j=1,\dots,N$), then if $A$ is increasing and $B$ is decreasing, we have
    \[
    A \square B = A \cap B.
    \]
    In this case the following inequality holds.
    \begin{equation} \label{eq:we_want_to_prove_in_general}
    \mathbb{P}_N(A \square B) \leq \mathbb{P}_N(A) \cdot \mathbb{P}_N(B).
    \end{equation}
    This is due to the theorem of Harris in \cite{harris:1960}.
    \begin{thm}[Harris inequality]
        If $A$ and $B$ are of opposite monotonicity, then
        \[
        \mathbb{P}_N(A \cap B) \leq \mathbb{P}_N(A) \cdot \mathbb{P}_N(B).
        \]
    Moreover, if $A$ and $B$ are both increasing or decreasing
    \[
    \mathbb{P}_N(A) \cdot \mathbb{P}_N(B) \leq \mathbb{P}_N(A \cap B).
    \]
    \end{thm}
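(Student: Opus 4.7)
The plan is to prove the classical Harris inequality on the product space $\Omega_N$ by induction on $|\Lambda_N|$. First I would reformulate both inequalities as a single statement about real-valued functions: for any $f,g : \Omega_N \to \mathbb{R}$ that are monotone in the same direction with respect to the product order, $\mathbb{E}_N[fg] \geq \mathbb{E}_N[f] \cdot \mathbb{E}_N[g]$. Taking $f = \mathds{1}_A$ and $g = \mathds{1}_B$ immediately yields the second half of the statement. The first half would follow from the observation that if $A$ is increasing and $B$ is decreasing, then $\mathds{1}_A$ and $-\mathds{1}_B$ are both increasing, so applying the co-monotone statement to this pair and rearranging gives $\mathbb{P}_N(A \cap B) \leq \mathbb{P}_N(A)\mathbb{P}_N(B)$.

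For the base case $|\Lambda_N| = 1$, I would use the standard symmetrization trick. Let $X$ and $Y$ be independent copies of the single coordinate $\omega_\lambda$; since $f$ and $g$ are monotone in the same direction on the two-point set $\{(1,0),(0,1)\}$, the product $(f(X) - f(Y))(g(X) - g(Y)) \geq 0$ holds pointwise. Taking expectations and using $X \stackrel{d}{=} Y$ together with the independence of $X$ and $Y$ gives $2\mathbb{E}_N[fg] - 2\mathbb{E}_N[f]\mathbb{E}_N[g] \geq 0$, which is exactly the required one-coordinate inequality.

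For the induction step, I would fix a coordinate $\lambda^* \in \Lambda_N$ and condition on $\omega_{\lambda^*}$. Define $F(\omega_{\lambda^*}) := \mathbb{E}_N[f \,|\, \omega_{\lambda^*}]$ and $G(\omega_{\lambda^*}) := \mathbb{E}_N[g \,|\, \omega_{\lambda^*}]$. Since $\mathbb{P}_N$ is a product measure, each conditional expectation is an integral of a monotone function against the same product measure on the remaining $|\Lambda_N|-1$ coordinates, hence $F$ and $G$ are monotone in $\omega_{\lambda^*}$ in the same direction as $f$ and $g$. Applying the induction hypothesis to the conditional law on the remaining coordinates yields $\mathbb{E}_N[fg \,|\, \omega_{\lambda^*}] \geq F(\omega_{\lambda^*}) G(\omega_{\lambda^*})$, and then the base case applied to $F$ and $G$ closes the induction via the chain $\mathbb{E}_N[fg] \geq \mathbb{E}_N[FG] \geq \mathbb{E}_N[F]\mathbb{E}_N[G] = \mathbb{E}_N[f]\mathbb{E}_N[g]$.

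The main obstacle is not technical but bookkeeping: one has to verify carefully that conditioning on a single coordinate preserves the co-monotonicity property for $F$ and $G$, and that the reduction from the "opposite monotonicity" inequality to the "same monotonicity" inequality is correctly set up by the sign flip. Neither step is deep, and the entire argument rests on the two-point rearrangement inequality established in the base case.
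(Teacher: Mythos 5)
Your proof is correct. Note that the paper does not actually prove this theorem --- it is stated as a classical result and attributed to Harris (1960) --- so there is no in-paper argument to compare against. What you have written is the standard proof of the Harris/FKG inequality for finite product measures: reduce both halves of the statement to the functional form $\mathbb{E}_N[fg] \geq \mathbb{E}_N[f]\,\mathbb{E}_N[g]$ for co-monotone $f,g$ (the opposite-monotonicity half following by replacing $\mathds{1}_B$ with $-\mathds{1}_B$), establish the one-coordinate case by the symmetrization identity $\mathbb{E}\left[(f(X)-f(Y))(g(X)-g(Y))\right] = 2\mathbb{E}[fg]-2\mathbb{E}[f]\mathbb{E}[g] \geq 0$, and induct by conditioning on a single coordinate. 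The two points you flag as bookkeeping are indeed the only places where care is needed, and both go through: since $\mathbb{P}_N$ is a product measure, for each fixed value of $\omega_{\lambda^*}$ the sections of $f$ and $g$ are co-monotone on the remaining coordinates (so the induction hypothesis applies conditionally), and integrating out those coordinates preserves monotonicity in $\omega_{\lambda^*}$ pointwise, so $F$ and $G$ are co-monotone on the two-point space and the base case closes the chain. This is a complete and self-contained proof of the cited theorem.
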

    However, we will show that in a certain case of ours the inequality in \cref{eq:we_want_to_prove_in_general} holds even if we do not assume anything about the monotonicity.
    For that we will use the following theorem in \cite{van_den_berg_kesten:1985}, \cite{reimer:2000} and \cite{borgs1999}.
    \begin{thm}[van den Berg-Kesten-Reimer inequality] \label{thm:van_den_berg_kesten}
        For any $A,B \subseteq \Omega_N$,
        \[
        \mathbb{P}_N\left(A \square B\right) \leq \mathbb{P}_N\left(A\right) \cdot\mathbb{P}_N\left(B\right).
        \]
    \end{thm}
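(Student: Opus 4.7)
The plan is to first establish the monotone version of this inequality—the original van den Berg–Kesten (1985) result—and then bootstrap to the general statement via Reimer's (2000) combinatorial argument. The monotone case corresponds exactly to the Harris inequality already stated in the paper (with the inequality reversed relative to opposite monotonicities), so I would try to extract it as a warm-up and then attack the fully general case.

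For the monotone warm-up, I would proceed by induction on $|\Lambda_N|$. Suppose both $A$ and $B$ are increasing. Pick any coordinate $\lambda \in \Lambda_N$ and decompose $A = A^0 \sqcup A^1$ where $A^i = A \cap \{\omega_\lambda = i\}$. Since $A$ is increasing, the slices $A_0 = \{\omega' \in \Omega_{N-1} : (\omega',0) \in A\}$ and $A_1 = \{\omega' \in \Omega_{N-1} : (\omega',1) \in A\}$ satisfy $A_0 \subseteq A_1$, and likewise for $B$. Then decompose $A \square B$ according to whether the disjoint witnesses $K,L \subseteq \Lambda_N$ use coordinate $\lambda$ or not: if neither does, the contribution reduces directly to the induction hypothesis on $\Omega_{N-1}$ applied to $A_0 \square B_0$; if (say) $K$ uses $\lambda$, the fact that $A$ is increasing means we may replace $A$ by $A_1$ on the reduced space and again invoke the induction hypothesis. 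Monotonicity $A_0 \subseteq A_1$ is exactly what makes the bookkeeping telescope into the product $\mathbb{P}_N(A) \cdot \mathbb{P}_N(B)$.

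For the general statement the main obstacle is that without monotonicity there is no canonical 'smaller' slice to substitute in the inductive step, and the simple slicing above breaks down. Reimer's insight is to introduce a reflection operation: for each configuration $\omega$ witnessing $A$ on $K$ and $B$ on $L$, one considers the two configurations obtained by freely flipping the coordinates outside of $K$ and outside of $L$ respectively, and bounds the number of such compatible pairs. The core combinatorial lemma takes the form
\[
|A \square B| \cdot 2^N \leq |A| \cdot |B|
\]
for subsets of $\{0,1\}^N$ under counting measure, which is exactly \Cref{thm:van_den_berg_kesten} for the uniform product measure. Once this uniform version is established, the general Bernoulli case follows by approximating each $p_\lambda$ by dyadic rationals, splitting each coordinate into a sufficient number of uniform binary copies (so the non-uniform measure becomes a marginal of a uniform product on a larger ground set), and passing to the limit in the dyadic approximation.

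I expect Reimer's combinatorial lemma itself to be by far the hardest step; the reflection/covering argument is delicate and balancing the two 'witness complements' requires a careful double count that I would almost certainly have to reconstruct from the original paper rather than rediscover. In contrast, the monotone induction and the uniform-to-Bernoulli reduction are, by comparison, routine and would be the parts I would attempt first to make sure the overall structure of the proof goes through.
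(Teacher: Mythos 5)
First, note that the paper does not prove \Cref{thm:van_den_berg_kesten} at all: it is quoted as a black box with citations to van den Berg--Kesten, Reimer, and Borgs--Chayes--Randall, so there is no in-paper argument to compare yours against. Your proposal correctly identifies the architecture of the known proof --- the monotone case by induction on coordinates, Reimer's combinatorial lemma for the uniform measure on $\{0,1\}^N$, and a dyadic splitting of coordinates to pass to general Bernoulli marginals --- and the normalization $|A \square B|\cdot 2^N \le |A|\cdot|B|$ is indeed the correct counting form of the statement for the uniform measure. The dyadic reduction also genuinely works, provided you check that the encoding maps $A \square B$ into $\tilde A \square \tilde B$ on the enlarged ground set (witness sets must pull back to disjoint unions of coordinate blocks), which is routine.

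However, as written this is a roadmap rather than a proof, and the missing piece is precisely the theorem's content. You explicitly defer Reimer's combinatorial lemma --- the ``butterfly'' statement about the sets $[s]_{K_s}\cap[\bar s]_{K_s^c}$ and the resulting injection of $(A\square B) \times \{0,1\}^N$ into $A \times B$ --- to reconstruction from the original paper, and without it nothing beyond the monotone case is established. Two further points need repair. Your claim that the monotone case ``corresponds exactly to the Harris inequality'' is false: for two increasing events Harris gives $\mathbb{P}_N(A\cap B)\ge \mathbb{P}_N(A)\cdot\mathbb{P}_N(B)$, an inequality in the opposite direction about a larger event (since $A\square B\subseteq A\cap B$); neither statement implies the other, and the monotone van den Berg--Kesten inequality requires its own, genuinely harder, proof. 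Moreover, your induction sketch does not close as stated: when neither witness set uses the coordinate $\lambda$, the restricted configuration witnesses $A_{\omega_\lambda}\square B_{\omega_\lambda}$ rather than $A_0\square B_0$, and the substitution ``replace $A$ by $A_1$'' is only available when $\omega_\lambda=1$; the actual argument needs more careful bookkeeping (for instance the coordinate-duplication trick) before the terms recombine into the product $\mathbb{P}_N(A)\cdot\mathbb{P}_N(B)$.
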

    A consequence of this theorem with the Harris inequality for events $A$ and $B$ of the same monotonicity is the following.
    \[
    \mathbb{P}_N(A \square B) \leq \mathbb{P}_N(A) \cdot \mathbb{P}_N(B) \leq \mathbb{P}_N(A \cap B).
    \]

    Now we focus on our case and do not assume anything about the monotonicity of the events induced by the partial order.
    We want to apply the theorem for certain type of events.
    For this let us fix the integers $1 \leq n \leq N < \infty$.
    Then, let us denote for $j=1,\dots,n$
    \begin{equation} \label{eq:def_of_the_indicators_for_taus}
    I_j(N) \defeq \left\{\tau_{\lambda_{j},\lambda_{j-1}}(N) = \infty \right\}
    \qquad
    \text{where $\lambda_j = (j,n-j)$}.
    \end{equation}
    That is, we consider all pairs of the walks started from time $n$ and the events that they do not coalesce until time $N$. For a set $A \subseteq \{1,\dots,n \}$ let us use the notation
    \[
    I_A(N) \defeq \bigcap_{j \in A}I_j(A).
    \]
    Then the following important lemma holds.
    \begin{lmm} \label{lmm:important_lemma}
         For any disjoint sets $A,B \subseteq \{1,\dots,n\}$
        \[
            I_A(N) \cap I_B(N) = I_A(N) \square I_{B}(N).
        \]
    \end{lmm}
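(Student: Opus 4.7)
The inclusion $I_A(N) \square I_B(N) \subseteq I_A(N) \cap I_B(N)$ is an instance of \eqref{eq:basic_box_and_intersection}, so the real work is in the reverse inclusion. The plan is, for every $\omega \in I_A(N) \cap I_B(N)$, to manufacture pairwise disjoint certificate sets $K^{(j)} \subseteq \Lambda_N$, one for each $j \in A \cup B$, so that fixing $\omega$ on $K^{(j)}$ alone already forces $I_j(N)$. Taking unions over $A$ and $B$ separately then supplies disjoint witnesses $K = \bigcup_{j \in A} K^{(j)}$ and $L = \bigcup_{j \in B} K^{(j)}$ for $\omega \in I_A(N) \square I_B(N)$.

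The natural source of these certificates is \Cref{lmm:paths_on_the_web_and_its_dual}. On $I_j(N)$ the walks $S_{\lambda_{j-1}}$ and $S_{\lambda_j}$ do not meet by time $N$, so the lemma provides a dual point $\lambda_D^{(j)}$ with $\norm{\lambda_D^{(j)}} = N$ whose dual walk $T_{\lambda_D^{(j)}}$ separates them at every $n \leq k \leq N$. Since $\lambda_{j-1}$ and $\lambda_j$ sit adjacent on the anti-diagonal at time $n$, the only dual site strictly between them is $(j - \tfrac{1}{2},\, n-j + \tfrac{1}{2})$, which pins down $T_{\lambda_D^{(j)}}(n)$ uniquely. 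I would then take
\[
K^{(j)} \defeq \left\{ T_{\lambda_D^{(j)}}(k) - \left(\tfrac{1}{2},\tfrac{1}{2}\right) :\; n+1 \leq k \leq N \right\},
\]
that is, exactly the primary sites whose arrows are consulted to trace this dual walk from its birth at time $N$ back down to time $n$.

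Disjointness of the $K^{(j)}$ across different $j$ should fall straight out of the coalescence of dual walks run backwards from time $N$: if two of the chosen trajectories ever shared a site at some common time, coalescence would force them to agree at time $n$ too, contradicting their distinct endpoints $(j-\tfrac{1}{2}, n-j+\tfrac{1}{2})$ there. Granted this, checking $[\omega]_K \subseteq I_A(N)$ reduces to two routine moves. First, any $\tilde\omega$ agreeing with $\omega$ on $K$ reproduces each dual walk $T_{\lambda_D^{(j)}}$, $j \in A$, step by step, because only the arrows in $K^{(j)}$ are consulted along the way. Second, \Cref{prp:the_dual_can_never_intersect}, applied under $\tilde\omega$ with the separator $T_{\lambda_D^{(j)}}$ wedged strictly between the two primary walks at their common birth time $n$, keeps them apart at every later time, in particular at time $N$ — which is precisely $I_j(N)$.

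The main subtlety is the self-referential nature of the construction: $K^{(j)}$ is built from a trajectory that itself depends on $\omega$, and the argument works only because the arrows inside $K^{(j)}$ are exactly the ones the dual walk consults. Pinning this pointer-chasing down cleanly, and observing that the non-crossing principle embodied by \Cref{prp:the_dual_can_never_intersect} propagates a strict separation at time $n$ forward to time $N$ (and not only backwards, as the proposition is literally phrased), is where I expect the care to be needed.
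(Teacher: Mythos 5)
Your proposal is correct and follows essentially the same route as the paper: both certify each non-coalescence event $I_j(N)$ by the separating dual path supplied by \Cref{lmm:paths_on_the_web_and_its_dual}, use coalescence of the dual walks to force the certificates for distinct $j$ (hence for $A$ and $B$) to be disjoint, and conclude membership in the box event. The paper merely packages this as an explicit decomposition of $I_A(N)$ into a union over all admissible collections of disjoint dual paths, whereas you build the certificate pointwise for each $\omega$ — the underlying idea, including the forward use of the non-crossing property, is the same.
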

    \begin{proof}
        Let $K = \left\{p^{(j)}_{m}\right\}_{m=n,\dots,N, \; j\in A} \subseteq \Lambda_N$ be a set of those points (\Cref{fig:separation_01}) on which for all $j \in A$ there is an allowed path in the dual walks (1st condition later), started from time $N$ (2nd condition later) such that it separates the points $\lambda_j$ and $\lambda_{j-1}$ (3rd condition later).
        Moreover, for different indices in $A$ the paths are disjoint (4th condition later).

        Formally the set of those points in $\Lambda_N$
        \begin{enumerate}
            \item $p^{(j)}_{m+1}-p^{(j)}_{m} \in \left\{(1,0),(0,1)\right\}$ for all $j\in A, \; m=n, \dots,N$,
            \item $\norm{p^{(j)}_N +(1/2,1/2)} = N$ for all $j \in A$
            \item $\lambda_{s-1} \succ p^{(j)}_{n} + (1/2,1/2) \succ \lambda_{s}$ for all $j \in A$
            \item if $j_1 \neq j_2$, then $p^{(j_1)}_m \neq p^{(j_2)}_m$ for all $m=1,\dots,N$
        \end{enumerate}
        \begin{figure}[H]
          \centering
          \includegraphics[width=0.8\textwidth]{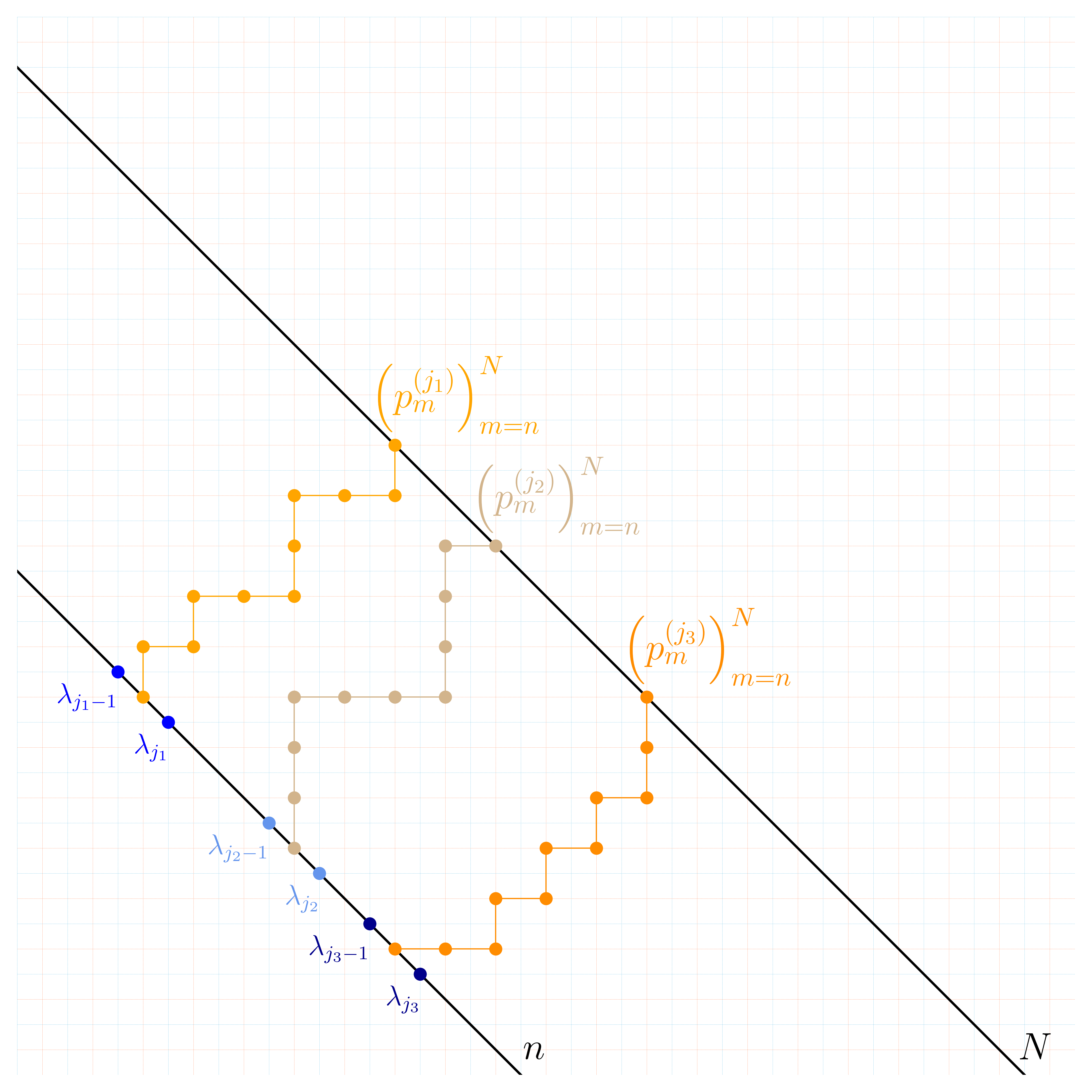}
          \caption{The points on which there is an allowed path in the dual web for the case $A=\left\{j_1,j_2,j_3 \right\}$. The points are on the web but we visualized the corresponding points on the dual web since it captures the meaning better.}
          \label{fig:separation_01}
        \end{figure}
        Then let $\mathcal{K}$ denote the set of all such $K \subseteq \Lambda_N$.
        Then clearly by \Cref{lmm:paths_on_the_web_and_its_dual} and the fact that one walk on the dual web can only separate one pair of points since the dual walks are coalescing too, it follows that
        \[
        I_A(N) = \bigcup_{K \in \mathcal{K}}
        \left\{\omega \in \Omega_N : \; p^{(j)}_{m+1}-p^{(j)}_m = \omega_{p^{(j)}_m}, \; \text{for all $p^{(j)}_m \in K$}\right\}
        \]
        Then if we define the set $\mathcal{L}$ of those sets $L = \left\{q^{(i)}_{m}\right\}_{m=n,\dots,N, \; i\in B} \subseteq \lambda_N $ for which the analogues of the previous conditions hold, we have
        \[
        I_B(N) = \bigcup_{L \in \mathcal{L}}
        \left\{\omega \in \Omega_N : \; q^{(i)}_{m+1}-q^{(i)}_m = \omega_{q^{(i)}_m}, \; \text{for all $q^{(i)}_m \in L$}\right\}.
        \]
        Then comparing the two
        \begin{multline*}
        I_A(N)\cap I_B(N)\\
        =
        \bigcup_{K \in \mathcal{K}, \; L \in \mathcal{L}}
        \left\{\omega \in \Omega_N : \; p^{(j)}_{m+1}-p^{(j)}_m = \omega_{p^{(j)}_m}, \; q^{(i)}_{m+1}-q^{(i)}_m = \omega_{q^{(i)}_m}\; \text{for all $p^{(j)}_m \in K, q^{(i)}_m \in L$}\right\}.
        \end{multline*}
        Moreover, notice that using the argument that one dual walk can only separate one pair and the fact that $A \cap B = \emptyset$, the above equality reduces to the case (\Cref{fig:separation_02})
        \begin{multline*}
        I_A(N)\cap I_B(N)\\
        =
        \bigcup_{\substack{K \in \mathcal{K}, \; L \in \mathcal{L},\\ K \cap L = \emptyset}}
        \left\{\omega \in \Omega_N : \; p^{(j)}_{m+1}-p^{(j)}_m = \omega_{p^{(j)}_m}, \; q^{(i)}_{m+1}-q^{(i)}_m = \omega_{q^{(i)}_m}\; \text{for all $p^{(j)}_m \in K, q^{(i)}_m \in L$}\right\}.
        \end{multline*}
        \begin{figure}[H]
          \centering
          \includegraphics[width=0.8\textwidth]{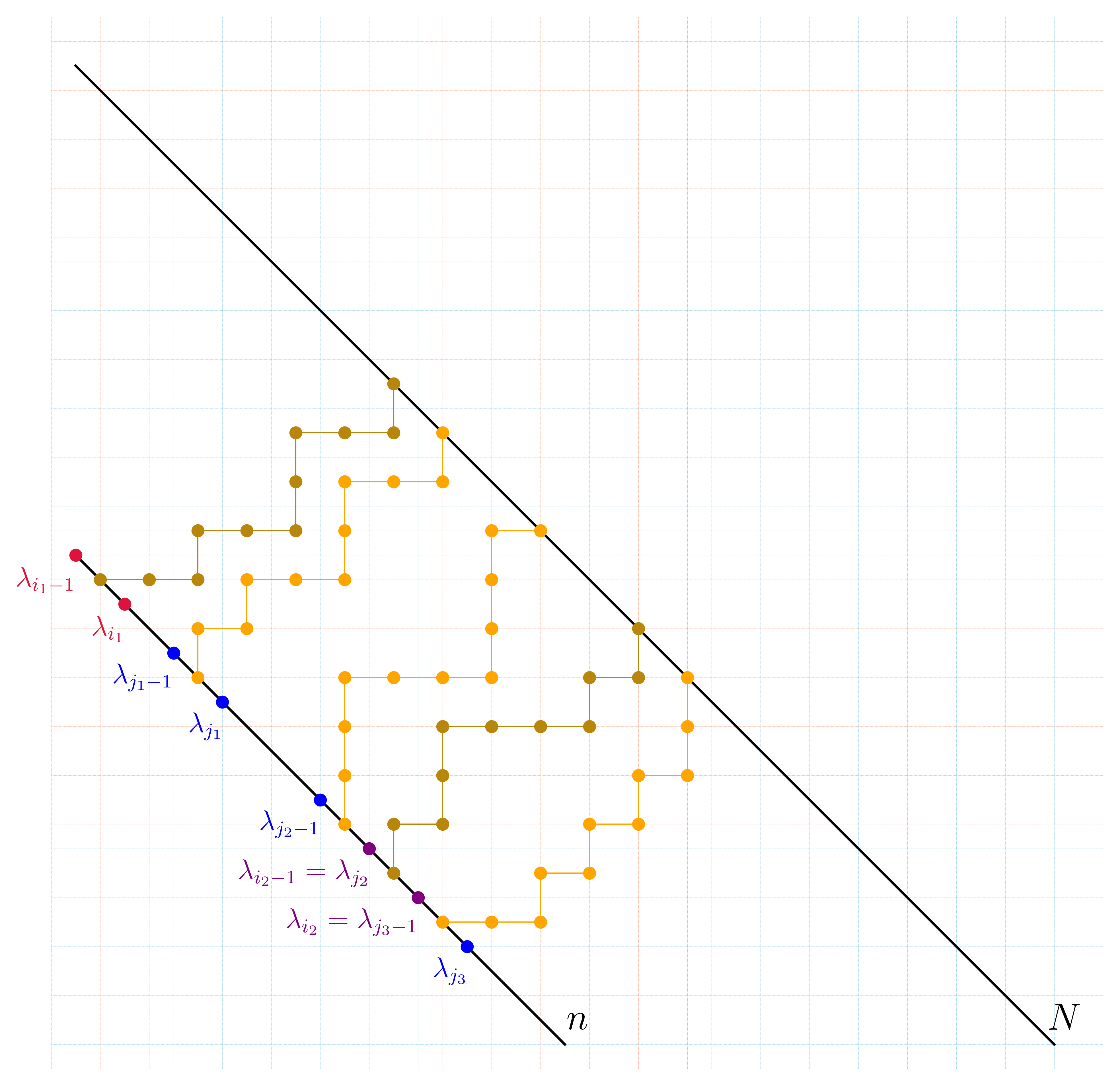}
          \caption{The points can only be separated with disjoint paths. In this case $A=\left\{j_1,j_2,j_3\right\}$ and $B=\left\{i_1,i_2\right\}$ having $i_2 = j_{3}-1$ and $j_2 = i_2-1$.}
          \label{fig:separation_02}
        \end{figure}
        However, by the definition in \cref{eq:def_of_restriction_operation} clearly
        \[
        \begin{split}
        I_A(N)\cap I_B(N)
        &\subseteq
        \bigcup_{\substack{K \in \mathcal{K}, \; L \in \mathcal{L},\\ K \cap L = \emptyset}}
        \left\{\omega \in \Omega_N : \; [\omega]_K \subseteq I_A(N), \; [\omega]_L \subseteq I_B(N)\right\}\\
        &\subseteq
        \bigcup_{\substack{K \in \Lambda, \; L \in \Lambda,\\ K \cap L = \emptyset}}
        \left\{\omega \in \Omega_N : \; [\omega]_K \subseteq I_A(N), \; [\omega]_L \subseteq I_B(N)\right\}\\
        &= I_A(N) \square I_B(N).
        \end{split}
        \]
        The other inclusion holds by definition (\cref{eq:basic_box_and_intersection}).
    \end{proof}
    Now, we want to apply this equality to increasing events of those defined in \cref{eq:def_of_the_indicators_for_taus}.
    First, we need recall the definition of increasing events. This is not related to the monotonicity induced by the partial order on $\Omega$ discussed before.
    For this (and for the following propositions) we use the following partial order on $\R^d$
    \begin{equation} \label{eq:def_of_partial_order_in_R^d}
    (x_1,\dots,x_d) \geq (y_1,\dots,y_d), \quad \text{if $\quad x_j \geq y_j$ \; for all $j=1,\dots,d$}.
    \end{equation}
    Then the event $A$ is increasing event of the events $A_1, \dots,A_n$, if its indicator $\mathds{1}_A$ is and increasing function of the indicators of $\mathds{1}_{A_1},\dots,\mathds{1}_{A_n}$. That is
    \[
    \mathds{1}_{A} = f(\mathds{1}_{A_1}, \dots, \mathds{1}_{A_n}),
    \]
    where $f : \{0,1\}^n \to \{0,1\}$ in an increasing function with respect to the partial order defined in \cref{eq:def_of_partial_order_in_R^d}.

    Now if $\mathcal{A}$ is an increasing event of $I_j(N) \; (j \in A)$ for some $A \subseteq \{1,\dots,n\}$, then it can be written as
    \[
    \mathcal{A} = \bigcup_{k=1}^{a}I_{A_k}(N),
    \]
    where $a$ is some integer and $A_k \subseteq A$ for all $k=1,\dots,a$.
        This follows from the fact that there is a one to one association between the increasing events of the events $I_j(N)$ and the elements of (the distributive lattice)
        \[
        \mathcal{M} = \left\{K \subseteq \{1,\dots,n\}: \text{if $K \cap L=\emptyset$, then $K \nsubseteq L \nsubseteq K$}\right\}.
        \]
        Then if
        \[
        \mathcal{M}_A = \left\{K \in \mathcal{M}: I_K(N) \subseteq A, \text{if $L \subsetneqq K$, then $I_L(N)\nsubseteq A$} \right\},
        \]
        we have that
        \[
        \mathcal{A} = \bigcup_{K\in\mathcal{M}_A}I_K(N).
        \]
    \begin{prp} \label{prp:square_operation_and_intersection}
        For any disjoint sets $A,B \subseteq \{1,\dots,n\}$, if $\mathcal{A}$ and $\mathcal{B}$ are increasing events of $I_j(N) \; (j \in A)$ and $I_i(N) \; (i \in B)$ respectively, then
        \[
        \mathcal{A} \cap \mathcal{B} = \mathcal{A} \square \mathcal{B}.
        \]
    \end{prp}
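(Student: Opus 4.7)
The inclusion $\mathcal{A} \square \mathcal{B} \subseteq \mathcal{A} \cap \mathcal{B}$ is \cref{eq:basic_box_and_intersection}, so the only thing to prove is the reverse inclusion $\mathcal{A} \cap \mathcal{B} \subseteq \mathcal{A} \square \mathcal{B}$. The plan is to reduce this to \Cref{lmm:important_lemma} via the structural description of increasing events given in the paragraph preceding the proposition.

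First, I will invoke the representation
\[
\mathcal{A} = \bigcup_{k=1}^{a} I_{A_k}(N), \qquad \mathcal{B} = \bigcup_{l=1}^{b} I_{B_l}(N),
\]
with $A_k \subseteq A$ and $B_l \subseteq B$ for all $k,l$. Distributing the intersection over the unions gives
\[
\mathcal{A} \cap \mathcal{B} = \bigcup_{k,l} \bigl( I_{A_k}(N) \cap I_{B_l}(N) \bigr).
\]
Since $A \cap B = \emptyset$, each pair $(A_k, B_l)$ is a pair of disjoint subsets of $\{1,\dots,n\}$, so \Cref{lmm:important_lemma} applies termwise and yields $I_{A_k}(N) \cap I_{B_l}(N) = I_{A_k}(N) \square I_{B_l}(N)$.

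Second, I will use a simple monotonicity property of $\square$: if $E_i \subseteq E_i'$ for $i=1,2$, then $E_1 \square E_2 \subseteq E_1' \square E_2'$. This is immediate from the definition, since any disjoint pair $(K,L) \subseteq \Lambda_N$ witnessing $\omega \in E_1 \square E_2$ also witnesses $\omega \in E_1' \square E_2'$. Applying it with $I_{A_k}(N) \subseteq \mathcal{A}$ and $I_{B_l}(N) \subseteq \mathcal{B}$ shows that every set in the union above lies in $\mathcal{A} \square \mathcal{B}$, and therefore so does the union itself. There is no real obstacle here: the proposition is essentially a bookkeeping reduction to \Cref{lmm:important_lemma}, and the only subtlety worth writing down is the elementary monotonicity of $\square$, which has not been recorded explicitly earlier in the text.
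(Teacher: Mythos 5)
Your proof is correct and follows essentially the same route as the paper: decompose $\mathcal{A}$ and $\mathcal{B}$ into unions of $I_{A_k}(N)$ and $I_{B_l}(N)$, apply \Cref{lmm:important_lemma} termwise, and reassemble. The only cosmetic difference is that you prove the monotonicity of $\square$ directly where the paper cites the inclusion \cref{eq:box_and_union}, which is the same elementary fact in a slightly different packaging.
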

    \begin{proof}
    Since the events are both increasing, they can be written as
    \[
    \mathcal{A} = \bigcup_{k=1}^{a}I_{A_k}(N)
    \qquad \text{and} \qquad
    \mathcal{B} = \bigcup_{l=1}^{b}I_{B_l}(N).
    \]
    Therefore,
    \[
    \mathcal{A} \cap \mathcal{B} 
    = \bigcup_{k=1}^{a} \bigcup_{l=1}^{b} I_{A_k}(N) \cap I_{B_l}(N).
    \]
    Notice, since $A \cap B = \emptyset$, for all $1 \leq k \leq a$ and $1 \leq l \leq b$, we have $A_k \cap B_l = \emptyset$.
    Therefore, by \Cref{lmm:important_lemma} it follows
    \[
    \mathcal{A} \cap \mathcal{B} =\bigcup_{k=1}^{a} \bigcup_{l=1}^{b} I_{A_k}(N) \square I_{B_l}(N).
    \]
    Now using the elementary property of the $\square$ operation in \cref{eq:box_and_union}
    \[
    \mathcal{A} \cap \mathcal{B} \subseteq \left(\bigcup_{k=1}^{a}I_{A_k}(N) \right) \square \left(\bigcup_{l=1}^{b}I_{B_l}(N)\right)
    = \mathcal{A} \square \mathcal{B}.
    \]
    The other inclusion holds again by definition (\cref{eq:basic_box_and_intersection}).
    \end{proof}
    We can apply the van den Berg-Kesten-Reimer inequality.
    \begin{crl} \label{crl:bound_on_the_intersection}
    For the events in \Cref{prp:square_operation_and_intersection}
    \[
        \mathbb{P}_N(\mathcal{A} \cap \mathcal{B}) \leq
        \mathbb{P}_N(\mathcal{A}) \cdot \mathbb{P}_N(\mathcal{B}).
    \]
    \end{crl}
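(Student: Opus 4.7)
The plan is essentially to chain the two results that have just been established. By \Cref{prp:square_operation_and_intersection} applied to the disjoint index sets $A, B \subseteq \{1,\dots,n\}$ and the increasing events $\mathcal{A}$, $\mathcal{B}$, I first rewrite the intersection as a $\square$-event:
\[
\mathcal{A} \cap \mathcal{B} = \mathcal{A} \square \mathcal{B}.
\]
Then I invoke the van den Berg--Kesten--Reimer inequality (\Cref{thm:van_den_berg_kesten}) on $\Omega_N$ to bound the probability of the right-hand side by the product $\mathbb{P}_N(\mathcal{A}) \cdot \mathbb{P}_N(\mathcal{B})$. Combining the equality and the inequality yields the claim.

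There is really no obstacle here: the genuine work has already been done in \Cref{lmm:important_lemma} (translating non-coalescence events into the existence of separating dual paths, which live on disjoint coordinates) and in its extension to arbitrary monotone combinations via \Cref{prp:square_operation_and_intersection}. The only minor point worth being explicit about is that the van den Berg--Kesten--Reimer inequality is stated for events in the finite product space $\Omega_N$, which is precisely the setting in which $\mathcal{A}$ and $\mathcal{B}$ are defined (they depend only on the coordinates $\omega_\lambda$ with $\norm{\lambda} \leq N$), so the hypotheses of \Cref{thm:van_den_berg_kesten} are satisfied without any additional reduction. The resulting two-line proof is what I would write.
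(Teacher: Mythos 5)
Your proof is correct and coincides with the paper's own argument: apply \Cref{prp:square_operation_and_intersection} to replace $\mathcal{A} \cap \mathcal{B}$ by $\mathcal{A} \square \mathcal{B}$, then apply \Cref{thm:van_den_berg_kesten}. Nothing further is needed.
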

    \begin{proof}
        Applying first \Cref{prp:square_operation_and_intersection} and then \Cref{thm:van_den_berg_kesten}
        \[
        \mathbb{P}_N(\mathcal{A} \cap \mathcal{B}) =
        \mathbb{P}_N(\mathcal{A} \square \mathcal{B})
        \leq
        \mathbb{P}_N(\mathcal{A}) \cdot \mathbb{P}_N(\mathcal{B}).
        \]
    \end{proof}
    
    \subsection{Negative association and the limit}

    First we need to recall the definition of negatively associated random variables.
    \begin{dfn}[Negative association] \label{dfn:NA}
        The random variables $X_1, \dots, X_n$ are negatively associated if for any disjoint subsets $A,B \subseteq \left\{1,\dots,n\right\}$ and functions $f:\R^{|A|} \to \R$ and $ g:\R^{|B|} \to \R$ both monotone increasing or decreasing
        \[
        \varhato{f\left(X_j : \; j \in A\right) \cdot g\left(X_i : \; i \in B\right)} \leq
        \varhato{f\left(X_j : \; j \in A\right)} \cdot \varhato{g\left(X_i : \; i \in B\right)}
        \]
    \end{dfn}

    Negative association is a very strong property (see \cite{wajc:2017}, \cite{dubhashi-ranjan:1998} and \cite{shao:200}).
    It is almost as good as if the random variables were independent.
    For example the bound
    \begin{equation} \label{eq:bound_on_the_marginals}
        \varhato{\prod_{j \in A}X_j} \leq \prod_{j\in A}\varhato{X_j}
    \end{equation}
    follow for any $A \subseteq \{1,\dots,n\}$.
    Moreover, it provides many large deviation bounds.
    Since we are investigating the sum of indicators, the following large deviation theorem (see \cite{wajc:2017} or \cite{dubhashi-ranjan:1998}) is that one which will come handy.
    \begin{prp} \label{thm:large_deviation_for_the_sum}
        Let $X_1,\dots,X_n$ be negatively associated random variables taking values in $\left\{0,1\right\}$ and denote $S_n = X_1 + X_2 + \dots + X_n$.
        Then for any $0 < \delta < 1$ the following two inequality hold.
        \[
        \valseg{S_n \geq (1+\delta)\varhato{S_n}} \leq
        \left( \frac{e^{\delta}}{(1+\delta)^{1+\delta}}\right)^{\varhato{S_n}}
        \]
        and
        \[
        \valseg{S_n \leq (1-\delta)\varhato{S_n}} \leq
        \left( \frac{e^{-\delta}}{(1-\delta)^{1-\delta}}\right)^{\varhato{S_n}}.
        \]
    \end{prp}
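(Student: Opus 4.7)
The plan is to follow the classical Chernoff argument, with the independence of the $X_j$ replaced by the negative association hypothesis. First, apply Markov's inequality to the exponential moment: for any $\lambda > 0$,
\[
\valseg{S_n \geq (1+\delta)\varhato{S_n}} \leq e^{-\lambda(1+\delta)\varhato{S_n}} \cdot \varhato{e^{\lambda S_n}}.
\]

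Next, factor the moment generating function under inequality via negative association. Since $x \mapsto e^{\lambda x}$ is monotone increasing for $\lambda > 0$, applying \Cref{dfn:NA} with $A=\{1\}$ and $B=\{2,\dots,n\}$ (the functions on both sides being products of exponentials, hence increasing) and inducting on $n$, one obtains
\[
\varhato{e^{\lambda S_n}} = \varhato{\prod_{j=1}^{n} e^{\lambda X_j}} \leq \prod_{j=1}^{n} \varhato{e^{\lambda X_j}},
\]
which is just \cref{eq:bound_on_the_marginals} applied to the nonnegative variables $e^{\lambda X_j}$. Since each $X_j$ is $\{0,1\}$-valued with $p_j \defeq \varhato{X_j}$, the pointwise identity $\varhato{e^{\lambda X_j}} = 1 + p_j(e^\lambda - 1) \leq \exp\left(p_j(e^\lambda - 1)\right)$ combined with the above factorization gives
\[
\varhato{e^{\lambda S_n}} \leq \exp\left(\varhato{S_n}(e^\lambda - 1)\right).
\]

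Finally, optimize in $\lambda$. The choice $\lambda = \log(1+\delta) > 0$ yields the upper-tail bound. For the lower tail, repeat the argument with $\lambda < 0$: then $x \mapsto e^{\lambda x}$ is monotone \emph{decreasing}, and since \Cref{dfn:NA} delivers the key product inequality as long as the families of test functions are \emph{of the same} monotonicity, the same induction produces $\varhato{e^{\lambda S_n}} \leq \prod_j \varhato{e^{\lambda X_j}}$. Optimizing at $\lambda = \log(1-\delta) < 0$ gives the claimed lower-tail inequality.

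The only nonroutine point is the passage from the pairwise negative association of \Cref{dfn:NA} to the full multiplicative inequality for all $n$ factors. This is not really an obstacle — the induction goes through because the product of finitely many nonnegative monotone-increasing (resp.\ decreasing) functions on disjoint coordinate blocks is again monotone of the same type, so one may peel off one variable at a time — but it is the step where one must verify that the negative association structure genuinely suffices in place of independence.
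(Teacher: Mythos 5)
Your proof is correct. The paper itself does not prove this proposition --- it states it with a citation to the literature (Wajc; Dubhashi--Ranjan) --- and your argument is precisely the standard Chernoff-bound proof given there: Markov's inequality applied to $e^{\lambda S_n}$, factorization of the moment generating function via negative association (peeling off one coordinate at a time, using that products of nonnegative coordinatewise-monotone functions on disjoint blocks remain monotone of the same type, which is exactly what \Cref{dfn:NA} and \cref{eq:bound_on_the_marginals} require), the bound $1+p_j(e^{\lambda}-1)\leq e^{p_j(e^{\lambda}-1)}$, and optimization at $\lambda=\log(1\pm\delta)$. You also correctly handle the sign flip for the lower tail, where $\lambda<0$ makes the test functions decreasing but of the \emph{same} monotonicity, which is all the definition of negative association demands.
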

    The corollary follows.
    \begin{crl} \label{crl:almost_sure_convergence_for_NA}
        Let $X_1,\dots,X_n$ be negatively associated random variables and denote $S_n = X_1 + \dots + X_n$.
        If for some $C,c > 0$
        \[
        \varhato{S_n} \geq C \cdot n^c(1+o(1)),
        \]
        then
        \[
        \lim_{n \to \infty}\frac{S_n}{\varhato{S_n}} = 1 \qquad \text{almost surely}.
        \]
    \end{crl}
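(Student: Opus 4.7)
The plan is to deduce almost sure convergence from the two-sided exponential tail bound in \Cref{thm:large_deviation_for_the_sum} via Borel--Cantelli, exploiting the fact that $\exp(-\kappa\, n^c)$ is summable for any $\kappa,c>0$. Implicit here is that $X_1,\dots,X_n$ take values in $\{0,1\}$, since we intend to invoke the preceding large deviation proposition; if that assumption is not already built in, I would state it explicitly.

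First, for a fixed $\delta \in (0,1)$ set
\[
\kappa^{+}(\delta) \defeq (1+\delta)\log(1+\delta) - \delta
\qquad \text{and} \qquad
\kappa^{-}(\delta) \defeq (1-\delta)\log(1-\delta) + \delta,
\]
both of which are strictly positive. The two inequalities in \Cref{thm:large_deviation_for_the_sum} can then be rewritten as
\[
\valseg{\absz{S_n - \varhato{S_n}} \geq \delta \varhato{S_n}} \leq
2\exp\bigl(-\kappa(\delta)\,\varhato{S_n}\bigr),
\]
where $\kappa(\delta) \defeq \min\{\kappa^{+}(\delta),\kappa^{-}(\delta)\} > 0$.

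Next I would use the hypothesis $\varhato{S_n} \geq C n^c(1+o(1))$ to conclude that for some $n_0$ and some $C' > 0$ one has $\varhato{S_n} \geq C' n^c$ for all $n \geq n_0$. Hence for each fixed $\delta$,
\[
\sum_{n=n_0}^{\infty}\valseg{\absz{S_n - \varhato{S_n}} \geq \delta \varhato{S_n}}
\leq \sum_{n=n_0}^{\infty} 2\exp(-\kappa(\delta) C' n^c) < \infty.
\]
Borel--Cantelli then gives a $\mathbb{P}$-null set $\mathcal{N}_\delta$ outside of which $\absz{S_n/\varhato{S_n} - 1} < \delta$ for all sufficiently large $n$.

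Finally I would take the countable union $\mathcal{N} = \bigcup_{k \geq 1} \mathcal{N}_{1/k}$, which is still null. Outside $\mathcal{N}$, for every $k$ one has $\limsup_n \absz{S_n/\varhato{S_n} - 1} \leq 1/k$, so the limsup is $0$ and the claimed almost sure convergence follows. There is no real obstacle here: the only point to watch is that the large deviation input is exponential in $\varhato{S_n}$ while the lower bound on $\varhato{S_n}$ is merely polynomial in $n$, which is precisely what makes the tail summable and the Borel--Cantelli step effective.
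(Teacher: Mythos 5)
Your proof is correct and follows essentially the same route as the paper: the two-sided Chernoff-type bound from \Cref{thm:large_deviation_for_the_sum}, summability of $\exp(-\kappa(\delta)\,C'n^c)$, Borel--Cantelli for each fixed $\delta$, and a countable intersection over $\delta = 1/k$ (the paper compresses the last two steps into ``by the first Borel--Cantelli lemma the statement follows''). Your observation that the hypothesis $X_i \in \{0,1\}$ must be carried over from \Cref{thm:large_deviation_for_the_sum}, though omitted from the corollary's statement, is a fair catch and is indeed how the corollary is applied later to indicator variables.
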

    \begin{proof}
        By \Cref{thm:large_deviation_for_the_sum} for any $0 < \delta < 1$
        \[
        \valseg{\frac{S_n}{\varhato{S_n}} \geq 1+\delta}\leq
        \left( \frac{e^{\delta}}{(1+\delta)^{1+\delta}}\right)^{\varhato{S_n}}
        \; \text{and} \quad
        \valseg{\frac{S_n}{\varhato{S_n}} \leq 1-\delta} \leq
        \left( \frac{e^{-\delta}}{(1-\delta)^{1-\delta}}\right)^{\varhato{S_n}}.
        \]
        However, it is easy to check that for $0 < \delta < 1$
        \[
        \frac{e^{\delta}}{(1+\delta)^{1+\delta}}, \frac{e^{-\delta}}{(1-\delta)^{1-\delta}} \in (0,1).
        \]
        Thus if we denote any of the two by $q \in (0,1)$, then we have  by the assumption in the statement of the theorem
        \[
        \sum_{n=1}^{\infty}q^{\varhato{S_n}} \leq \sum_{n=1}^{\infty}q^{C \cdot n^c(1+o(1))} < \infty.
        \]
        Therefore, by the first Borel-Cantelli lemma the statement follows.
    \end{proof}
    Now, we want to apply this result to the number of components.
    First we need to show that the appropriate random variables are negatively associated.
    For this we will use the result of the previous subsection.
    \begin{prp} \label{prp:indicators_are_negatively_associated}
        The indicators of the events defined $\mathds{1}_{I_j(N)} \; (j=1,\dots,n)$ in \cref{eq:def_of_the_indicators_for_taus} are negatively associated.
    \end{prp}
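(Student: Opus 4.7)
My plan is to reduce negative association to the single-event bound of \Cref{crl:bound_on_the_intersection}. That corollary already covers indicators of increasing events supported on disjoint coordinate sets, so what is left is the standard passage from indicators to arbitrary monotone test functions via level-set decomposition.

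First I would fix disjoint $A,B\subseteq\{1,\dots,n\}$ and monotone $f,g$ as in \Cref{dfn:NA}. The case of two decreasing functions I would dispatch immediately by replacing $(f,g)$ with $(-f,-g)$: this leaves the product invariant and turns both functions into increasing ones, so it suffices to treat the increasing case. Next I would use that $f$ and $g$ attain only finitely many values $v_0<v_1<\dots<v_a$ and $w_0<w_1<\dots<w_b$ to write
\[
f = v_0 + \sum_{k=1}^{a}(v_k-v_{k-1})\mathds{1}_{\mathcal{A}_k},
\qquad
g = w_0 + \sum_{l=1}^{b}(w_l-w_{l-1})\mathds{1}_{\mathcal{B}_l},
\]
where $\mathcal{A}_k = \{f \geq v_k\}$ and $\mathcal{B}_l = \{g \geq w_l\}$. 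Since $\mathds{1}_{\mathcal{A}_k}$ is a $\{0,1\}$-valued monotone increasing function of the vector $(\mathds{1}_{I_j(N)} : j\in A)$, the event $\mathcal{A}_k$ is an increasing event of $I_j(N), j \in A$ in the sense of \Cref{prp:square_operation_and_intersection}, and symmetrically for $\mathcal{B}_l$. All coefficients $v_k-v_{k-1}$ and $w_l-w_{l-1}$ are strictly positive.

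Then I would expand the product $fg$, take the expectation, and apply \Cref{crl:bound_on_the_intersection} to each mixed term $\mathbb{P}_N(\mathcal{A}_k\cap\mathcal{B}_l)$. Because the coefficients are non-negative, the term-by-term bound $\mathbb{P}_N(\mathcal{A}_k\cap\mathcal{B}_l)\leq \mathbb{P}_N(\mathcal{A}_k)\mathbb{P}_N(\mathcal{B}_l)$ survives summation and the right-hand side factorises as $\varhato{f}\cdot\varhato{g}$, which is exactly the inequality in \Cref{dfn:NA}.

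I do not anticipate a real obstacle: all the combinatorial content has already been paid for in the BK-Reimer application leading to \Cref{crl:bound_on_the_intersection}. The only point worth double-checking is that the level-set events $\mathcal{A}_k$ and $\mathcal{B}_l$ really qualify as increasing events of the $I_j(N)$'s in the formal sense of \Cref{prp:square_operation_and_intersection}, but this reduces to the trivial observation that the indicator of a super-level set of an increasing function is itself increasing in the same variables.
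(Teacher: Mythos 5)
Your proposal is correct and follows essentially the same route as the paper: reduce to the increasing case by negation, reduce monotone test functions to indicators of increasing events, and apply \Cref{crl:bound_on_the_intersection}. Your explicit level-set decomposition $f = v_0 + \sum_k (v_k - v_{k-1})\mathds{1}_{\mathcal{A}_k}$ is just a spelled-out version of the paper's remark that monotone functions are non-negative linear combinations of increasing $\{0,1\}$-valued functions (and the possibly negative constants $v_0, w_0$ are harmless since they contribute identically to both sides).
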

    \begin{proof}
    Let $A,B \subseteq \{1,\dots,n\}$ be disjoint sets.
    Then we need to show that \Cref{dfn:NA} holds.
    Notice that it is enough to show for monotone increasing functions $f:\{0,1\}^{|A|} \to \R$ and $g:\{0,1\}^{|B|} \to \R$ since taking the negative sign the inequality does not change but the functions become monotone decreasing.
    Moreover, it is enough to show that for $f:\{0,1\}^{|A|} \to \{0,1\}$ and $g:\{0,1\}^{|B|} \to \{0,1\}$ monotone increasing since all monotone functions are linear combinations of such with non-negative coefficients.
    That is, it is enough to show for $\mathcal{A}$ and $\mathcal{B}$ both increasing events of $I_j(N) \; (j\in A)$ and $I_i(N) \; (i\in B)$ respectively that
    \[
    \valseg{\mathcal{A} \cap \mathcal{B}} \leq \valseg{\mathcal{A}} \cdot \valseg{\mathcal{B}}.
    \]
    However, since $\mathbb{P}_N$ was just the restrictions of $\mathbb{P}$ on $\Omega_N$, we have by \Cref{crl:bound_on_the_intersection}
    \[
     \valseg{\mathcal{A} \cap \mathcal{B}} =
     \mathbb{P}_N(\mathcal{A} \cap \mathcal{B}) \leq
     \mathbb{P}_N(\mathcal{A}) \cdot \mathbb{P}_N(\mathcal{B}) =
     \valseg{\mathcal{A}} \cdot \valseg{\mathcal{B}}.
    \]
    \end{proof}
    Now notice that for any $j=1,\dots,n$ trivially, if
    \[
    I_j \defeq \left\{\tau_{\lambda_{j},\lambda_{j-1}} = \infty\right\},
    \]
    then
    \begin{equation} \label{eq:def_of_the_limit_of_the_indicators}
    \lim_{N \to \infty}\mathds{1}_{I_j(N)} = \mathds{1}_{ I_j} \qquad \text{almost surely}.
    \end{equation}
    Then the following corollary of the previous \Cref{prp:indicators_are_negatively_associated}
    follows.
    \begin{crl} \label{crl:limit_indicators_are_negatively_associated}
        The indicators $\mathds{1}_{I_j} \; (j=1,\dots,n)$ of the events defined in \cref{eq:def_of_the_limit_of_the_indicators} are negatively associated.
    \end{crl}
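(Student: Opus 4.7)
The plan is to obtain \Cref{crl:limit_indicators_are_negatively_associated} from \Cref{prp:indicators_are_negatively_associated} by a routine passage to the limit $N \to \infty$. By the same reduction used in the proof of \Cref{prp:indicators_are_negatively_associated}, it suffices to verify the negative association inequality for $\{0,1\}$-valued monotone increasing functions $f \colon \{0,1\}^{|A|} \to \{0,1\}$ and $g \colon \{0,1\}^{|B|} \to \{0,1\}$, where $A, B \subseteq \{1,\dots,n\}$ are disjoint. That is, I would show that
\[
\varhato{f\bigl(\mathds{1}_{I_j} : j \in A\bigr) \cdot g\bigl(\mathds{1}_{I_i} : i \in B\bigr)} \leq \varhato{f\bigl(\mathds{1}_{I_j} : j \in A\bigr)} \cdot \varhato{g\bigl(\mathds{1}_{I_i} : i \in B\bigr)}.
\]

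By \Cref{prp:indicators_are_negatively_associated}, the same inequality holds with every $\mathds{1}_{I_j}$ replaced by $\mathds{1}_{I_j(N)}$, for each finite $N \geq n$. The convergence \cref{eq:def_of_the_limit_of_the_indicators} is almost sure (in fact pointwise and monotone, since the events $I_j(N)$ are decreasing in $N$ with $I_j = \bigcap_N I_j(N)$), so
\[
f\bigl(\mathds{1}_{I_j(N)} : j \in A\bigr) \xrightarrow{N \to \infty} f\bigl(\mathds{1}_{I_j} : j \in A\bigr) \quad \text{almost surely},
\]
and analogously for $g$, because $f$ and $g$ are continuous on the discrete space $\{0,1\}^{|A|}$ respectively $\{0,1\}^{|B|}$. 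All random variables in sight are bounded by $1$, so the dominated convergence theorem lets me pass the inequality
\[
\varhato{f\bigl(\mathds{1}_{I_j(N)} : j \in A\bigr) \cdot g\bigl(\mathds{1}_{I_i(N)} : i \in B\bigr)} \leq \varhato{f\bigl(\mathds{1}_{I_j(N)} : j \in A\bigr)} \cdot \varhato{g\bigl(\mathds{1}_{I_i(N)} : i \in B\bigr)}
\]
to the limit, yielding the desired bound for the indicators of $I_j$.

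There is no real obstacle here: the only thing to be careful about is the initial reduction to $\{0,1\}$-valued monotone functions, which is precisely the reduction already carried out in the proof of \Cref{prp:indicators_are_negatively_associated}, and the fact that the almost-sure convergence passes through $f$ and $g$ because their domain is discrete (so continuity is automatic). With these observations the corollary follows immediately by dominated convergence, and no further property of the web is needed.
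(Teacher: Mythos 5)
Your proposal is correct and follows essentially the same route as the paper: apply \Cref{prp:indicators_are_negatively_associated} at each finite level $N$, use the almost sure convergence in \cref{eq:def_of_the_limit_of_the_indicators} (which passes through $f$ and $g$ since their domains are finite), and conclude by dominated convergence. The only cosmetic difference is that you first reduce to $\{0,1\}$-valued increasing functions before taking the limit, whereas the paper passes to the limit directly for general bounded monotone $f$ and $g$; both orderings are valid.
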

    \begin{proof}
        Let $A,B \subseteq \{1,\dots,n\}$ be disjoints sets and $f:\{0,1\}^{|A|} \to \R$ and $g:\{0,1\}^{|B|} \to \R$ both monotone increasing or decreasing.
        Then notice
        \begin{align*}
        \lim_{N \to \infty} f\left(\mathds{1}_{I_j(N)}: \; j \in A\right) &=
        f\left(\mathds{1}_{I_j}: \; j \in A\right) \qquad \text{almost surely}\\
        \lim_{N \to \infty} g\left(\mathds{1}_{I_i(N)}: \; i \in B\right) &=
        g\big(\mathds{1}_{I_i} :\; i \in B\big) \qquad \text{almost surely}.
        \end{align*}
        Moreover, clearly $f$ and $g$ are bounded functions. Then by dominated convergence and \Cref{prp:indicators_are_negatively_associated}
        \[
        \begin{split}
        \varhato{ f\left(\mathds{1}_{I_j}: \; j \in A\right) \cdot g\big(\mathds{1}_{I_i} :\; i \in B\big)} &=
        \lim_{N\to\infty} \varhato{f\left(\mathds{1}_{I_j(N)}: \; j \in A\right) \cdot g\left(\mathds{1}_{I_i(N)} :\; i \in B\right)} \\ &\leq
        \lim_{N \to \infty} \varhato{f\left(\mathds{1}_{I_j(N)}: \; j \in A\right)} \cdot \varhato{g\left(\mathds{1}_{I_i(N)} :\; i \in B\right)}\\ &=
        \varhato{f\left(\mathds{1}_{I_j}: \; j \in A\right)} \cdot \varhato{g\big(\mathds{1}_{I_i} :\; i \in B\big)}.
        \end{split}
        \]
    \end{proof}
    We want to emphasize that this is clearly a geometric property of the web.
    The property does not depend on the underlying Bernoulli measures.
    From this theorem the main theorem of this section follows.
    \begin{thm} \label{thm:convergence_of_the_number_of_components}
        Suppose for $\varhato{C_n}$ we have that for some $C,c > 0$
        \[
        \varhato{C_n} \geq C \cdot n^c(1+o(1)).
        \]
        Then
        \[
        \lim_{n \to \infty}\frac{C_n}{\varhato{C_n}} = 1 \quad \text{almost surely}.
        \]
    \end{thm}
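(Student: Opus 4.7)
The plan is to reduce the statement to a direct application of \Cref{crl:almost_sure_convergence_for_NA}. Writing $S_n \defeq \sum_{j=1}^{n}\mathds{1}_{I_j}$ for the sum appearing in \cref{eq:number_of_components}, we have $C_n = 1 + S_n$. By \Cref{crl:limit_indicators_are_negatively_associated}, the variables $\mathds{1}_{I_1},\dots,\mathds{1}_{I_n}$ are negatively associated and take values in $\{0,1\}$, so the structural hypothesis of \Cref{crl:almost_sure_convergence_for_NA} is satisfied. All that remains is to verify the growth condition on $\varhato{S_n}$ and then transfer the conclusion from $S_n$ back to $C_n$.

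For the growth condition, we use $\varhato{S_n} = \varhato{C_n} - 1$ together with the assumption $\varhato{C_n} \geq C \cdot n^c(1+o(1))$ with $c > 0$ to conclude
\[
\varhato{S_n} \geq C \cdot n^c(1+o(1)) - 1 = C \cdot n^c(1+o(1)),
\]
where the subtracted constant is absorbed into the $o(1)$ term because $n^c \to \infty$. Hence \Cref{crl:almost_sure_convergence_for_NA} applies and gives $S_n/\varhato{S_n} \to 1$ almost surely.

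To finish, write
\[
\frac{C_n}{\varhato{C_n}}
= \frac{1+S_n}{1+\varhato{S_n}}
= \frac{\varhato{S_n}}{1+\varhato{S_n}}\cdot \frac{S_n}{\varhato{S_n}} + \frac{1}{1+\varhato{S_n}}.
\]
Since $\varhato{S_n}\to \infty$, the first prefactor tends to $1$ and the last term tends to $0$ deterministically, while $S_n/\varhato{S_n}\to 1$ almost surely by the previous step. Combining these yields $C_n/\varhato{C_n}\to 1$ almost surely, as claimed. There is essentially no hidden obstacle here: all the probabilistic substance sits in \Cref{crl:limit_indicators_are_negatively_associated} (the negative association coming from the BKR inequality via \Cref{lmm:important_lemma}) and in \Cref{crl:almost_sure_convergence_for_NA} (the Borel--Cantelli argument on the exponential deviation bounds), and the only minor bookkeeping is handling the harmless additive constant distinguishing $C_n$ from $S_n$.
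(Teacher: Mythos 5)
Your proposal is correct and follows essentially the same route as the paper's own proof: reduce $C_n = 1+S_n$ to the negatively associated sum $S_n$, invoke \Cref{crl:limit_indicators_are_negatively_associated} and \Cref{crl:almost_sure_convergence_for_NA}, and absorb the additive constant in the final limit. The only difference is cosmetic bookkeeping in the last algebraic step.
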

    \begin{proof}
        Recall by \cref{eq:number_of_components} and using the notation in \cref{eq:def_of_the_limit_of_the_indicators}
        \[
        C_n = 1 + \sum_{k=1}^{n} \mathds{1}_{I_k}.
        \]
        Moreover, by \Cref{crl:limit_indicators_are_negatively_associated} we know that the variables $\mathds{1}_{I_k}$ are negatively associated.
        Therefore, if we denote their sum by $S_n$, then $C_n = 1 + S_n$.
        Since for some $C,c > 0$ we have $\varhato{C_n} \geq C \cdot n^c(1+o(1))$, clearly
        $\varhato{S_n} \geq C \cdot n^c(1+o(1))$.
        Therefore, by \Cref{crl:almost_sure_convergence_for_NA}
        \[
        \lim_{n\to\infty}\frac{S_n}{\varhato{S_n}} = 1 \qquad \text{almost surely}.
        \]
        Since $\lim_{n \to \infty}\varhato{S_n} = \infty$, it follows that
        \[
        \lim_{n \to \infty}\frac{C_n}{\varhato{C_n}}
        = \lim_{n \to \infty} \frac{1+S_n}{\varhato{S_n}} \cdot \frac{\varhato{S_n}}{1+\varhato{S_n}}
        = 1 \qquad \text{almost surely}.
        \]
    \end{proof}
    We will give exact calculations for $\varhato{C_n}$ in \Cref{section_5} when discussing the Pólya Web.

    \newpage

    \section{The Pólya Web} \label{section_4}
    \thispagestyle{plain}
    From now on, we focus our attention only on the Pólya Web.
    As mentioned before, it is a special case of the previous web with the following Bernoulli measures.
    For $\lambda = (a,b) \in \Lambda$ using the shorthand notation
    \begin{equation} \label{eq:def_of_psi}
    \psi(\lambda) \defeq \frac{a}{a+b}
    \end{equation}
    for the ratio of the first coordinate to the sum of the coordinates we have
    \[
    \mathbb{P}_\lambda\left((1,0)\right) = 1 - \mathbb{P}_\lambda\left((0,1)\right) = \psi(\lambda).
    \]
    We will call the coalescing random walks $S_\lambda(n)$ coalescing Pólya Walks.
    As the title of this section suggests, we will focus on local properties of the Pólya Web.
    It means in this section we only focus on finite number of coalescing Pólya Walks.
    
    Let us introduce the following notations. We denote by $X_\lambda(n)$ and $Y_\lambda(n)$ the $x$ and $y$ coordinates of a Pólya Walk started from $\lambda \in \Lambda$ at time $n \geq \norm{\lambda}$. That is
    \[
    X_\lambda(n) \defeq \kek{S_\lambda(n)} \qquad \text{and} \qquad
    Y_\lambda(n) \defeq \piros{S_\lambda(n)}.
    \]
    Notice by the usage of universal time $X_\lambda(n) + Y_\lambda(n) = n$.
    Finally the ratio of the first coordinate to the sum of the coordinates of the Pólya Walk at time $n$ by $Z_\lambda(n)$. Therefore,
    \begin{equation} \label{eq:def_of_the_ration}
    Z_\lambda(n) \defeq \psi(S_\lambda(n)) = \frac{X_\lambda(n)}{X_\lambda(n)+Y_\lambda(n)} = \frac{X_\lambda(n)}{n}.
    \end{equation}
    An immediate consequence of the properties of the Pólya Urn discussed in the introduction
    if that for $\lambda = (a,b) \in \Lambda$
    \begin{equation} \label{eq:beta_convergence}
    \lim_{n \to \infty}Z_\lambda(n) \eqdef Z_\lambda \sim \text{Beta}(a,b)
    \qquad \text{exists almost surely}.
    \end{equation}
    Moreover, let us denote the probability density function of the distribution of $Z_\lambda$ by
    \[
    f_\lambda(x) \defeq \frac{\text{d} \,\mathbb{P}_*Z_\lambda}{\text{d}\,\mathbf{Leb}}(x)
    = \frac{1}{B(a,b)}x^{a-1}(1-x)^{b-1} \cdot \mathds{1}_{[0,1]}(x),
    \]
    where $B(a,b)$ is the normalizing factor, i.e.
    \[
    B(a,b) = \int_{0}^{1}x^{a-1}(1-x)^{b-1}\,dx = \frac{\Gamma(a)\Gamma(b)}{\Gamma(a+b)},
    \]
    and its cumulative distribution function by
    \[
    F_\lambda(x) \defeq \int_{-\infty}^{x}f_\lambda(u) \, du.
    \]

    \subsection{Limiting variables and the trajectories}
    In this subsection we are investigating the relationship of the trajectories of the coalescing Pólya Walks and the random variables $Z_\lambda$ obtained by the limit in \cref{eq:beta_convergence}. It will turn out that these random variables describe well the behaviour of the coalescing Pólya Walks. The key is that the function $\psi$ defined in \cref{eq:def_of_psi} turns out to be a good choice. It works well with the partial order of the web defined in \Cref{dfn:partial_order_on_Lambda}.
    \begin{prp}\label{prp:ratio_and_order}
        For any $(a_1,b_1) = \lambda_1 \succ \lambda_2 = (a_2,b_2)$
        \[
        \psi(\lambda_1) < \psi(\lambda_2).
        \]
    \end{prp}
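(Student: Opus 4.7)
The statement reduces to a direct algebraic verification, so there is no deep structure to unwind. Writing $\lambda_i = (a_i, b_i)$, the claim $\psi(\lambda_1) < \psi(\lambda_2)$ spells out as $\frac{a_1}{a_1+b_1} < \frac{a_2}{a_2+b_2}$. Since both denominators are positive for any point of $\Lambda$ on which a Pólya walk lives, I would clear denominators, obtaining $a_1(a_2 + b_2) < a_2(a_1 + b_1)$, and cancel the symmetric $a_1 a_2$ term on both sides. This reduces the proposition to the equivalent inequality
\[
a_1 b_2 < a_2 b_1.
\]

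From here the plan is a two-step sandwich, $a_1 b_2 \leq a_2 b_2 \leq a_2 b_1$, where the first step uses the hypothesis $a_1 \leq a_2$ together with $b_2 \geq 0$, and the second step uses $b_2 \leq b_1$ together with $a_2 \geq 0$. To upgrade $\leq$ to $<$, I would split cases on which component of $\lambda_1 \succ \lambda_2$ is the strict one: if $a_1 < a_2$ the first step is strict (provided $b_2 > 0$), while if $b_1 > b_2$ the second step is strict (provided $a_2 > 0$). The only potential obstacle is the degenerate case in which $\lambda_1$ and $\lambda_2$ both lie on the same coordinate axis, so that $a_1 = a_2 = 0$ or $b_1 = b_2 = 0$; in that situation the $\psi$ values coincide deterministically at $0$ or $1$. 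This case is implicitly outside the scope of the proposition, since $\psi$ is invoked here to govern genuinely random Pólya increments. Modulo this caveat, the argument is mechanical and I foresee no real difficulty.
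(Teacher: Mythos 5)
Your proof is essentially the paper's: both clear denominators and reduce the claim to $a_1 b_2 < a_2 b_1$ (the paper writes this as the numerator $a_2b_1-a_1b_2>0$ of $\psi(\lambda_2)-\psi(\lambda_1)$), with your case split on which inequality is strict supplying the same content. Your caveat about axis points is a genuine one that the paper's proof silently skips: for instance $\lambda_1=(0,2)\succ(0,1)=\lambda_2$ gives $\psi(\lambda_1)=\psi(\lambda_2)=0$, so the strict inequality really does require both points to lie off the coordinate axes.
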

    \begin{proof}
        By definition
            \[
            a_1 < a_2, \quad b_1 \geq b_2 \qquad \text{or} \qquad a_1 \leq a_2, \quad b_1 > b_2.
            \]
            Then
            \[
            \psi(\lambda_2) - \psi(\lambda_1) = \frac{a_2}{a_2+b_2} - \frac{a_1}{a_1+b_1}
            = \frac{a_2b_1-a_1b_2}{(a_1+b_1)(a_2+b_2)} > 0.
            \]
    \end{proof}
    Therefore, the trajectories inherit this monotone property.
    \begin{prp} \label{prp:witness_monoton}
        For any $\lambda_1 \succeq \lambda_2$ and for any $n \geq \max\left\{ \norm{\lambda_1}, \norm{\lambda_2}\right\}$
        \[
        Z_{\lambda_1}(n) \leq Z_{\lambda_2}(n).
        \]
        Moreover, equality holds if and only if $S_{\lambda_1}(n) = S_{\lambda_2}(n)$.
    \end{prp}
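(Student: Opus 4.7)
The plan is to chain the two preceding propositions directly: \Cref{prp:partial_order_and_trajectories} tells us that the partial order on starting points is preserved along trajectories, and \Cref{prp:ratio_and_order} tells us that $\psi$ is strictly order-reversing on $\succ$. Since $Z_\lambda(n)=\psi(S_\lambda(n))$ by the definition in \cref{eq:def_of_the_ration}, composing these two facts should give everything we need almost for free.

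More precisely, first I would fix $\lambda_1 \succeq \lambda_2$ and $n \geq \max\{\norm{\lambda_1},\norm{\lambda_2}\}$, and apply \Cref{prp:partial_order_and_trajectories} to obtain $S_{\lambda_1}(n) \succeq S_{\lambda_2}(n)$. There are then two mutually exclusive cases. If $S_{\lambda_1}(n) = S_{\lambda_2}(n)$, then clearly $Z_{\lambda_1}(n) = \psi(S_{\lambda_1}(n)) = \psi(S_{\lambda_2}(n)) = Z_{\lambda_2}(n)$, so equality holds. If instead $S_{\lambda_1}(n) \succ S_{\lambda_2}(n)$, then \Cref{prp:ratio_and_order} applies to the pair $S_{\lambda_1}(n), S_{\lambda_2}(n) \in \Lambda$ and yields the strict inequality $\psi(S_{\lambda_1}(n)) < \psi(S_{\lambda_2}(n))$, i.e. $Z_{\lambda_1}(n) < Z_{\lambda_2}(n)$.

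Together these two cases establish $Z_{\lambda_1}(n) \leq Z_{\lambda_2}(n)$, and they also give the characterization of equality: equality in the second case is impossible because \Cref{prp:ratio_and_order} is strict, so $Z_{\lambda_1}(n)=Z_{\lambda_2}(n)$ forces us to be in the first case, i.e. $S_{\lambda_1}(n)=S_{\lambda_2}(n)$. The converse direction of the equivalence is immediate from the definition of $Z_\lambda(n)$.

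I do not anticipate any real obstacle here: the proposition is essentially a composition of two lemmas already in hand, and the only thing one must be careful about is making sure that the pair $(S_{\lambda_1}(n),S_{\lambda_2}(n))$ stays inside $\Lambda$ (so that \Cref{prp:ratio_and_order} is applicable), which is automatic since all trajectories live in $\Lambda = \N\times\N$. The whole proof should fit in a few lines.
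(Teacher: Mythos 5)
Your proposal is correct and follows exactly the paper's own argument: apply \Cref{prp:partial_order_and_trajectories} to get $S_{\lambda_1}(n) \succeq S_{\lambda_2}(n)$ and then use the strict order-reversal of $\psi$ from \Cref{prp:ratio_and_order} to obtain both the inequality and the characterization of equality. Your explicit case split just spells out what the paper leaves as ``clearly''.
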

    That is, for Pólya Walks started from two different ordered points the ratio of the first coordinate will stay monotone for the whole trajectory, and they become equal if and only if the two walks stay in the same point of the plane.
    \begin{proof}
        By \Cref{prp:partial_order_and_trajectories} for any $n \geq \max\left\{ \norm{\lambda_1}, \norm{\lambda_2}\right\}$
        \[
        S_{\lambda_1}(n) \succeq S_{\lambda_2}(n)
        \]
        then by the first property of $\psi$ in \Cref{prp:ratio_and_order}
        \[
        Z_{\lambda_1}(n) = \psi(S_{\lambda_1}(n)) \leq
        \psi(S_{\lambda_2}(n)) = Z_{\lambda_2}(n) .
        \]
        Clearly equality holds if and only if $S_{\lambda_1}(n) = S_{\lambda_2}(n)$.
    \end{proof}
    However, we will show that this property will survive if we take the limit and consider the variables $Z_\lambda$.
    \begin{prp} \label{prp:limiting_monoton}
        For any $\lambda_1 \succeq \lambda_2$
        \[
        Z_{\lambda_1} \leq Z_{\lambda_2} \quad \text{almost surely}.
        \]
    \end{prp}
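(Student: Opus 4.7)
The plan is to simply pass to the limit in the pathwise inequality given by \Cref{prp:witness_monoton}. That proposition establishes the finite-time monotonicity $Z_{\lambda_1}(n) \leq Z_{\lambda_2}(n)$ for every $n \geq \max\{\norm{\lambda_1}, \norm{\lambda_2}\}$, and this is a deterministic (pathwise) inequality on $\Omega$, not just an almost sure one. Meanwhile, \cref{eq:beta_convergence} guarantees that $Z_{\lambda_i}(n) \to Z_{\lambda_i}$ almost surely for $i=1,2$.

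Concretely, I would let $\Omega_i \subseteq \Omega$ be the event of probability one on which $Z_{\lambda_i}(n) \to Z_{\lambda_i}$, and work on $\Omega_1 \cap \Omega_2$, which still has probability one. On this intersection both limits exist and the inequality $Z_{\lambda_1}(n) \leq Z_{\lambda_2}(n)$ holds for every sufficiently large $n$, so passing to the limit yields $Z_{\lambda_1} \leq Z_{\lambda_2}$. That is all there is to it.

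There is no real obstacle. The only point worth flagging is that the strict-equality clause of \Cref{prp:witness_monoton} does \emph{not} survive the limit: two walks that never meet can still produce the same limiting ratio, so we can only assert the weak inequality $Z_{\lambda_1} \leq Z_{\lambda_2}$ here. The question of whether strict inequality holds (equivalently, whether $Z_{\lambda_1} = Z_{\lambda_2}$ can occur with positive probability without the walks coalescing) is a separate matter and not what is claimed.
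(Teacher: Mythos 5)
Your argument is correct and is essentially the same as the paper's: both pass to the limit in the pathwise inequality from \Cref{prp:witness_monoton} on the full-measure event where both limits in \cref{eq:beta_convergence} exist (the paper phrases the limit passage via $\limsup$, which is equivalent). Your remark that only the weak inequality survives the limit is accurate and consistent with how the paper handles the equality case later in \Cref{lmm:witness_functions_and_stopping_times}.
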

    \begin{proof}
        By the previous \Cref{prp:witness_monoton}, for any $n \geq \max\left\{ \norm{\lambda_1}, \norm{\lambda_2}\right\}$
        \[
        Z_{\lambda_1}(n) \leq Z_{\lambda_2}(n).
        \]
        Therefore,
        \[
        \limsup_{n \to \infty}Z_{\lambda_1}(n) \leq \limsup_{n \to \infty}Z_{\lambda_2}(n).
        \]
        However, by \cref{eq:beta_convergence} both $Z_{\lambda_1}(n)$ and $Z_{\lambda_2}(n)$ converge almost surely to $Z_{\lambda_1}$ and $Z_{\lambda_1}$ respectively.
        Therefore, the statement holds.
    \end{proof}
    Now we connect the stopping times defined in the beginning of \Cref{section_3} and the limiting random variables $Z_\lambda$.
    That is for a pair $\lambda_1 \succeq \lambda_2$ the events that $Z_{\lambda_1}$ and $Z_{\lambda_2}$ are equal is almost surely the same as the event that the two coalescing Pólya Walks started from $\lambda_1$ and $\lambda_2$ meet in finite time.
    Formalizing this results in the following lemma.
    \begin{lmm} \label{lmm:witness_functions_and_stopping_times}
        For any $\lambda_1 \succeq \lambda_2$
        \[
        \left\{\tau_{\lambda_1,\lambda_2} < \infty\right\} = \left\{Z_{\lambda_1} = Z_{\lambda_2}\right\} \qquad \text{almost surely}.
        \]
        Moreover, as a consequence
        \[
        \left\{\tau_{\lambda_1,\lambda_2} = \infty\right\} = \left\{Z_{\lambda_1} < Z_{\lambda_2}\right\} \qquad \text{almost surely}.
        \]
    \end{lmm}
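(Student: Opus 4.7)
The strategy is to prove the two inclusions separately. The easy direction $\{\tau_{\lambda_1,\lambda_2} < \infty\} \subseteq \{Z_{\lambda_1} = Z_{\lambda_2}\}$ is immediate from coalescence: on this event $S_{\lambda_1}(n) = S_{\lambda_2}(n)$ for every $n \geq \tau_{\lambda_1,\lambda_2}$, hence $Z_{\lambda_1}(n) = Z_{\lambda_2}(n)$ eventually, and \cref{eq:beta_convergence} passes to the limit.

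For the reverse inclusion I would instead prove the stronger quantitative statement $\valseg{\tau_{\lambda_1,\lambda_2} = \infty,\; Z_{\lambda_1} = Z_{\lambda_2}} = 0$ by a coupling. The structural observation driving the argument is that before their first meeting, the two walks sit at different points of each level set $\{x+y=n\}$, so they query disjoint components of the product environment $\omega$. Consequently the joint law of $(S_{\lambda_1}, S_{\lambda_2})$ in the web is indistinguishable from the following construction on an enlarged space: take independent environments $\omega^{(1)}, \omega^{(2)}$, let $\tilde S_{\lambda_i}$ be the Pólya walk from $\lambda_i$ driven by $\omega^{(i)}$, denote their first meeting time by $\tilde\tau$, and set $S_{\lambda_1} := \tilde S_{\lambda_1}$ together with $S_{\lambda_2}(n) := \tilde S_{\lambda_2}(n)$ for $n \leq \tilde\tau$ and $S_{\lambda_2}(n) := \tilde S_{\lambda_1}(n)$ for $n > \tilde\tau$. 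A step-by-step comparison of the one-step transition kernels (different current positions produce conditionally independent steps; equal current positions produce equal next steps) confirms that the two constructions induce the same law on $(S_{\lambda_1}, S_{\lambda_2})$.

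Under the coupling $\{\tau_{\lambda_1,\lambda_2} = \infty\} = \{\tilde\tau = \infty\}$, and on this event $Z_{\lambda_i} = \tilde Z_{\lambda_i}$ for $i = 1, 2$. Since each $\tilde Z_{\lambda_i}$ is a tail-measurable function of $\omega^{(i)}$ alone, the pair $(\tilde Z_{\lambda_1}, \tilde Z_{\lambda_2})$ is independent, and by \cref{eq:beta_convergence} each marginal has a continuous Beta distribution. Independence together with continuity then gives
\[
\valseg{\tau_{\lambda_1,\lambda_2} = \infty,\; Z_{\lambda_1} = Z_{\lambda_2}} \leq \valseg{\tilde Z_{\lambda_1} = \tilde Z_{\lambda_2}} = 0
\]
by a one-line Fubini argument, which combined with the forward inclusion yields the a.s.\ equality of events. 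The \emph{moreover} statement then follows from \Cref{prp:limiting_monoton}: the a.s.\ inequality $Z_{\lambda_1} \leq Z_{\lambda_2}$ upgrades $\{Z_{\lambda_1} \neq Z_{\lambda_2}\}$ to $\{Z_{\lambda_1} < Z_{\lambda_2}\}$ outside a null set. The only delicate step I anticipate is writing the coupling carefully enough that the equality of joint laws is truly convincing rather than merely plausible; once that is in place, the remainder reduces to the elementary fact that two independent variables with continuous marginals are almost surely unequal.
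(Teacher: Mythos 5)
Your proposal is correct and follows essentially the same route as the paper: the easy inclusion from coalescence, then the coupling with two independent environments glued together at the independent meeting time $\tilde\tau$, equality of joint laws, and the observation that on $\{\tilde\tau=\infty\}$ the limits coincide with the independent Beta-distributed variables $\tilde Z_{\lambda_1},\tilde Z_{\lambda_2}$, which are almost surely distinct. The step you flag as delicate (verifying the equality of joint laws) is precisely the point the paper also treats somewhat informally, so your instinct to spell out the one-step transition comparison is a sensible refinement rather than a divergence.
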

    \begin{proof}
        Notice that if for some finite $N$ we have
        $
        S_{\lambda_1}(N) = S_{\lambda_2}(N)
        $,
        then for all $n \geq N$ we have
        $
        Z_{\lambda_1}(n) = Z_{\lambda_2}(n).
        $
        Consequently $Z_{\lambda_1} = Z_{\lambda_2}$. It means
        \[
        \left\{\tau_{\lambda_1,\lambda_2} < \infty \right\} \subseteq \left\{Z_{\lambda_1} = Z_{\lambda_2}\right\}.
        \]
        Therefore, it is enough to prove
        \[
        \valseg{\tau_{\lambda_1,\lambda_2} = \infty, \; Z_{\lambda_1} = Z_{\lambda_2}} = 0.
        \]
        Consider two \textit{independent} Pólya Walks on the web started from $\lambda_1$ and $\lambda_2$ and denote them by $\tilde{S}_{\lambda_1}$ and $\tilde{S}_{\lambda_2}$.
        Then consider the following stopping time
        \[
        \tilde{\tau}_{\lambda_1,\lambda_2} \defeq
        \inf\left\{k \geq \max\left\{\| \lambda_1 \|, \|\lambda_2\|\right\} : \; \tilde{S}_{\lambda_1}(k) = \tilde{S}_{\lambda_2}(k)\right\}.
        \]
        Now consider the following construction (\Cref{fig:construction_from_independent})
        \begin{equation} \label{eq:construction_from_independent}
        \hat{S}_{\lambda_1}(n) \defeq \tilde{S}_{\lambda_1}(n) \qquad \text{and} \qquad
        \hat{S}_{\lambda_2}(n) \defeq 
        \begin{cases}
        \tilde{S}_{\lambda_2}(n) & \hbox{if $m < \tilde{\tau}_{\lambda_1,\lambda_2}$} \\
        \tilde{S}_{\lambda_1}(n) & \hbox{if $m \geq \tilde{\tau}_{\lambda_1,\lambda_2}$}
        \end{cases}.
    \end{equation}
    Then for the following stopping time
        \[
        \hat{\tau}_{\lambda_1,\lambda_2} \defeq
        \inf\left\{k \geq \max\left\{\| \lambda_1 \|, \|\lambda_2\|\right\} : \; \hat{S}_{\lambda_1}(k) = \hat{S}_{\lambda_2}(k)\right\}
        \]
    we have $\tilde{\tau}_{\lambda_1,\lambda_2} = \hat{\tau}_{\lambda_1,\lambda_2}$.
    \begin{figure}[h!]
      \centering
      \begin{subfigure}[b]{0.33\textwidth}
        \centering
        \includegraphics[width=\textwidth]{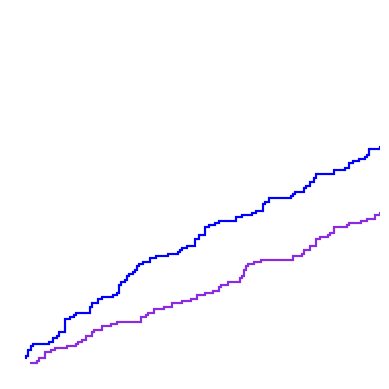}
      \end{subfigure}
      \begin{subfigure}[b]{0.33\textwidth}
        \centering
        \includegraphics[width=\textwidth]{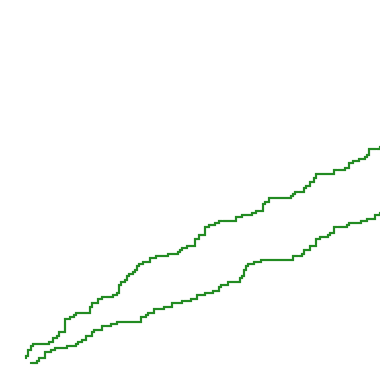}
      \end{subfigure}
    
      \vskip\baselineskip
      \begin{subfigure}[b]{0.33\textwidth}
        \centering
        \includegraphics[width=\textwidth]{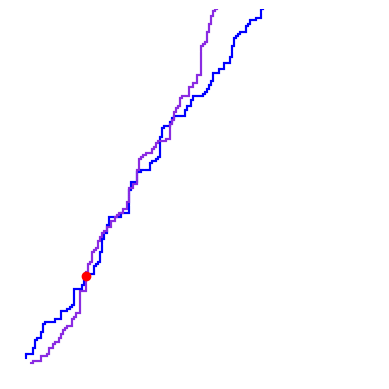}
      \end{subfigure}
      \begin{subfigure}[b]{0.33\textwidth}
        \centering
        \includegraphics[width=\textwidth]{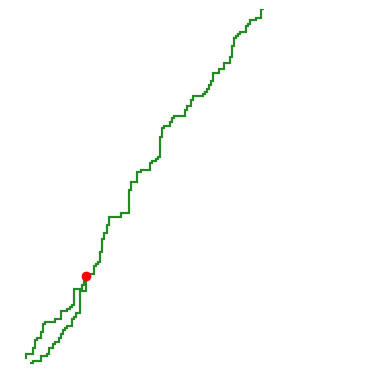}
      \end{subfigure}
    
      \caption{The construction in \cref{eq:construction_from_independent}.}
      \label{fig:construction_from_independent}
    \end{figure}
    
    Notice, the random walks $S_{\lambda_1}(n)$ and $S_{\lambda_2}(n)$ in the original web follow and independent trajectories until the point they meet. From that point they follow the same trajectory. Therefore, their joint distribution with the recently defined new variables are the same
    \[
    (S_{\lambda_1}(n), S_{\lambda_2}(n), \tau_{\lambda_1,\lambda_2}) \overset{d}{=}
    (\hat{S}_{\lambda_1}(n), \hat{S}_{\lambda_2}(n), \hat{\tau}_{\lambda_1,\lambda_2}),
    \]
    and in particular $\tau_{\lambda_1,\lambda_2} \overset{d}{=} \hat{\tau}_{\lambda_1,\lambda_2}$.

    Thus after applying $\psi$ to the original and the new random walks
    \[
    \valseg{\tau_{\lambda_1,\lambda_2} = \infty, \; Z_{\lambda_1} = Z_{\lambda_2}} =
    \valseg{\hat{\tau}_{\lambda_1,\lambda_2} = \infty, \; \hat{Z}_{\lambda_1} = \hat{Z}_{\lambda_2}}.
    \]
    Moreover, notice that by construction in \cref{eq:construction_from_independent} the event of the right hand side
    \[
    \left\{\hat{\tau}_{\lambda_1,\lambda_2} = \infty, \; \hat{Z}_{\lambda_1} = \hat{Z}_{\lambda_2}\right\}
    =
    \left\{\tilde{\tau}_{\lambda_1,\lambda_2} = \infty, \; \hat{Z}_{\lambda_1} = \hat{Z}_{\lambda_2}\right\}
    \subseteq
    \left\{\tilde{Z}_{\lambda_1} = \tilde{Z}_{\lambda_2}\right\}.
    \]
    However, since $\tilde{Z}_{\lambda_1}$ and $\tilde{Z}_{\lambda_2}$ are two independent random variables with absolutely continuous distributions, we have
    \[
    \valseg{\tilde{Z}_{\lambda_1} = \tilde{Z}_{\lambda_2}} = 0.
    \]
    \end{proof}
    The strength of \Cref{lmm:witness_functions_and_stopping_times} is that if we want to find the probabilities of the finiteness of the stopping times it is enough to find the joint distributions of the limiting random variables $Z_\lambda$.

    \subsection{The joint density of the limiting variables}
    In this subsection, we derive a formula for the joint density of $Z_{\lambda_1}, Z_{\lambda_2}, \dots, Z_{\lambda_n}$ for $\lambda_1 \succeq \lambda_2 \succeq \dots \succeq \lambda_n$.
    By \Cref{prp:limiting_monoton}, we know that $Z_{\lambda_1} \leq Z_{\lambda_2} \leq \dots \leq Z_{\lambda_n}$.
    The monotonicity implies that the joint density has different mappings on $2^{n-1}$ different linear subspaces of $\R^n$ since from the $n-1$ inequalities we need to choose which ones are the strict ones.
    The total number of possibilities is $2^{n-1}$.

    Moreover, notice that we do not need to calculate all the $2^{n-1}$ densities.
    Since if there are more than one consecutive equalities, then we can reduce it to a lower dimensional density in $\R^{n-1}$. It is true since
    \[
    \left\{Z_{\lambda_{i-1}} = Z_{\lambda_{i}} =Z_{\lambda_{i+1}}\right\}
    = \left\{Z_{\lambda_{i-1}} =Z_{\lambda_{i+1}}\right\}.
    \]
    As a consequence, we only need to focus on those $Z_{\lambda_i}$, which have a strict inequality in front of them or after them (\Cref{fig:density_and_trajectories}).
    
    Therefore, consider $l_1 \succeq r_1 \succ l_2 \succeq r_2 \succ \dots \succ l_n \succeq r_n$.
    Then its enough to determine the density for the case
    \[
    Z_{l_1} = Z_{r_1} < Z_{l_2} = Z_{r_2} < \dots < Z_{l_n} = Z_{r_n}.
    \]
    Notice that $l_i$ and $r_i$ can be the same.
    \begin{figure}[H]
      \centering
      \includegraphics[width=0.6\textwidth]{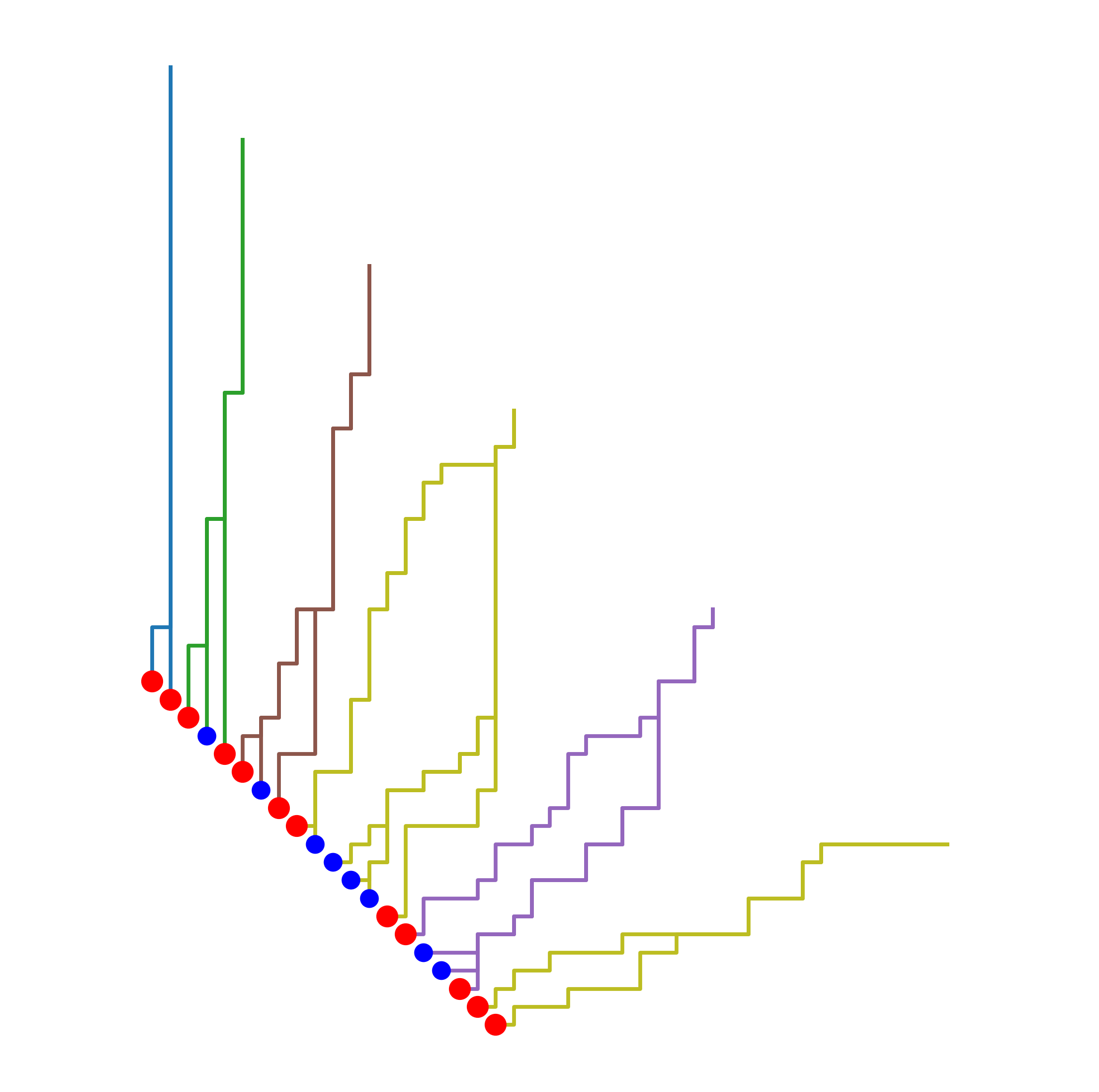}
      \caption{It is enough to consider the red points for such an event.}
      \label{fig:density_and_trajectories}
    \end{figure}
    
    In this case let us denote the density by
    \begin{equation} \label{eq:density_definition}
    \valseg{\bigcap_{k=1}^{n}\halm{Z_{l_k} = Z_{r_k} \in I_k}}
    =
    \int_{I_1\times I_2 \times \dots \times I_n}
    \begin{pmatrix}
        l_1 & r_1 & l_2 & r_2 &\dots & l_n & r_n\\
        x_1 & x_1 & x_2 & x_2 &\dots & x_n & x_n\\
    \end{pmatrix}\prod_{k=1}^n dx_k,
    \end{equation}
    where $0 < a_1 < b_1 < \dots < a_n < b_n < 1$ and $I_k = [a_k,b_k) \; (k=1,\dots,n)$.
    We have an explicit compact formula for the density in \cref{eq:density_definition}.
    It can be expressed as a determinant of a matrix with entries (depending on $x_1, \dots,x_n$) expressed with the probability density and cumulative distribution functions of $Z_{l_1}, Z_{r_1}, \dots,Z_{l_n},Z_{r_n}$.
    Recall that we denoted by $ f_{l_j}, f_{r_j}$ and $F_{l_j}, F_{r_j}$.
    We give this formula in the case when for some $k \in \N$
    \[
    \norm{l_j} = \norm{r_j} = k \qquad \text{for all $j=1,\dots n$}. 
    \]
    That is, when the coalescing Pólya Walks are started at the same universal time.
    It is not a significant restriction since for most of the applications this formula is useful enough. In this case the following theorem holds.
    \begin{thm} \label{thm:joint_density}
    For any $n \in \N^+$, $l_1 \succeq r_1 \succ l_2 \succeq r_2 \succ \dots \succ l_n \succeq r_n$ having $\norm{l_1} = \norm{r_1} = \dots = \norm{l_n} = \norm{r_n}$ and for any real numbers $0 < x_1 < x_2 < \dots x_n < 1$
    \[
    \begin{pmatrix}
        l_1 & r_1 & l_2 & r_2 &\dots & l_n & r_n\\
        x_1 & x_1 & x_2 & x_2 &\dots & x_n & x_n\\
    \end{pmatrix}
    \]
    \[
    =
    \begin{vNiceMatrix}[c,margin]
        1-F_{l_1}(x_1) & f_{l_1}(x_1) & 1-F_{l_1}(x_2) & f_{l_1}(x_2) & \dots &
        1-F_{l_1}(x_n) & f_{l_1}(x_n)\\
        -F_{r_1}(x_1) & f_{r_1}(x_1) & 1-F_{r_1}(x_2) & f_{r_1}(x_2) & \dots &
        1-F_{r_1}(x_n) & f_{r_1}(x_n)\\
        -F_{l_2}(x_1) & f_{l_2}(x_1) & 1-F_{l_2}(x_2) & f_{l_2}(x_2) & \dots &
        1-F_{l_2}(x_n) & f_{l_2}(x_n)\\
        -F_{r_2}(x_1) & f_{r_2}(x_1) & -F_{r_2}(x_2) & f_{r_2}(x_2) & \dots &
        1-F_{r_2}(x_n) & f_{r_2}(x_n)\\
        \vdots & \vdots & \vdots & \vdots & \ddots & \vdots &\vdots\\
        -F_{l_n}(x_1) & f_{l_n}(x_1) & -F_{l_n}(x_2) & f_{l_n}(x_2) & \dots &
        1-F_{l_n}(x_n) & f_{l_n}(x_n)\\
        -F_{r_n}(x_1) & f_{r_n}(x_1) & -F_{r_n}(x_2) & f_{r_n}(x_2) & \dots &
        -F_{r_n}(x_n) & f_{r_n}(x_n)\\
    \end{vNiceMatrix}_{2n \times 2n}.
    \]
    \end{thm}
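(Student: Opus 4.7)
The plan is to combine a coupling with $2n$ independent Pólya walks, analogous to the construction in the proof of \Cref{lmm:witness_functions_and_stopping_times}, with a Karlin--McGregor / Lindström--Gessel--Viennot-style sign-reversing argument.

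As a warm-up I would first settle the case $n=1$. Let $\tilde S_{l_1},\tilde S_{r_1}$ be two independent Pólya walks with limits $\tilde Z_{l_1},\tilde Z_{r_1}$ (independent Beta variables). The switching coupling of \Cref{lmm:witness_functions_and_stopping_times} gives $(S_{l_1},S_{r_1})\overset{d}{=}(\hat S_{l_1},\hat S_{r_1})$ with $\hat Z_{r_1}=\tilde Z_{l_1}$ on $\{\tilde\tau<\infty\}$, hence $\mathbb{P}(Z_{l_1}=Z_{r_1}\leq x)=\mathbb{P}(\tilde\tau<\infty,\,\tilde Z_{l_1}\leq x)$. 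Two facts then finish the computation: (i) \Cref{prp:partial_order_and_trajectories} applied in the independent web implies that the strict partial order between the two walks can only be broken at a meeting, so $\{\tilde Z_{l_1}>\tilde Z_{r_1}\}\subseteq\{\tilde\tau<\infty\}$ almost surely; (ii) conditional on meeting at any point $\mu$, the two walks from $\mu$ onward are independent Pólya walks from $\mu$, so $(\tilde Z_{l_1},\tilde Z_{r_1})$ is exchangeable on $\{\tilde\tau<\infty\}$. Splitting according to the sign of $\tilde Z_{l_1}-\tilde Z_{r_1}$ and using (i)--(ii) give
\[
\mathbb{P}(Z_{l_1}=Z_{r_1}\leq x)=\int_0^x\bigl[(1-F_{l_1}(u))f_{r_1}(u)+F_{r_1}(u)f_{l_1}(u)\bigr]\,du,
\]
which is exactly the $2\times 2$ determinant of the theorem.

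For general $n$, I would extend the coupling to $2n$ independent Pólya walks $\tilde S_\lambda$ for $\lambda\in\{l_1,r_1,\ldots,l_n,r_n\}$, whose limits have joint density $\prod_j f_{l_j}(y_j)f_{r_j}(y_j')$. The event $\{Z_{l_k}=Z_{r_k}\in I_k,\ k=1,\ldots,n\}$ translates into "every pair meets in finite time, and the coalesced limits land in $I_1,\ldots,I_n$ in the prescribed order $x_1<\cdots<x_n$". Applying the facts (i)--(ii) pair by pair and combining with the order-preservation between distinct pairs from \Cref{prp:limiting_monoton}, the probability expands as a signed sum of products of $F$'s and $f$'s indexed by orderings of the $2n$ independent limits relative to $x_1<\cdots<x_n$. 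A Lindström--Gessel--Viennot sign-reversing involution on "crossed" orderings then collapses the sum to the claimed $(2n)\times(2n)$ determinant; the three entry types $1-F_\lambda(x_i)$, $-F_\lambda(x_i)$, $f_\lambda(x_i)$ appear exactly as the signed contributions of the events $\tilde Z_\lambda>x_i$, $\tilde Z_\lambda\leq x_i$, and $\tilde Z_\lambda\in dx_i$ respectively.

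The hardest step will be the bookkeeping of the sign-reversing involution in the general-$n$ case: I would need to construct an explicit involution on the set of crossed orderings of the $2n$ independent limits and verify that it cancels their signed contributions in pairs. A secondary subtlety is that $(Z_{l_j},Z_{r_j})_{j=1}^n$ is singular with respect to Lebesgue measure on $\mathbb{R}^{2n}$, being concentrated on the $n$-dimensional diagonal $\{Z_{l_j}=Z_{r_j}\}$, so one must consistently interpret the density as in~\eqref{eq:density_definition} and control limits as the intervals $I_k$ shrink; but this is purely technical.
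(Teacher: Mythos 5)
Your $n=1$ computation is correct and is a genuinely different (and rather elegant) route to the $2\times 2$ case: instead of the paper's combination of the Karlin--McGregor determinant for distinct endpoints with the inclusion--exclusion identity
\[
\begin{pmatrix} l_1 & r_1\\ x & x\end{pmatrix}
=\begin{pmatrix} r_1 & r_1\\ x & x\end{pmatrix}
-\int_{-\infty}^{x}\begin{pmatrix} l_1 & l_1 & r_1 & r_1\\ u & u & x & x\end{pmatrix}du,
\]
you use the switching coupling of \Cref{lmm:witness_functions_and_stopping_times} together with the observations that independent walks cannot cross without meeting and that the pair of limits is exchangeable conditionally on meeting; this reproduces $(1-F_{l_1})f_{r_1}+F_{r_1}f_{l_1}$ correctly.

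For general $n$, however, there is a genuine gap: the decisive step of your argument --- the explicit sign-reversing involution on ``crossed'' orderings of the $2n$ independent limits, and the verification that the surviving signed contributions assemble into exactly the claimed pattern of $1-F_\lambda(x_i)$ above the diagonal blocks and $-F_\lambda(x_i)$ below --- is only named, not constructed. This is precisely where the content of the theorem lies, and it is not a routine extension of the $n=1$ case for two reasons. First, the coupling of $2n$ coalescing walks with $2n$ independent ones must handle chains of successive mergers (a coalesced pair can later absorb a third walk), which your two-walk switching construction does not automatically provide. Second, the exchangeability argument (ii) breaks down in the presence of the other walks: conditioning on $\tilde\tau_{l_j,r_j}<\infty$ no longer makes $(\tilde Z_{l_j},\tilde Z_{r_j})$ exchangeable once you also impose the ordering events involving $\tilde Z_{r_{j-1}}$ and $\tilde Z_{l_{j+1}}$, which are not symmetric in the pair; so the term-by-term identification of $F$ versus $1-F$ cannot be done ``pair by pair'' as you suggest. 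The paper circumvents all of this by first establishing the all-distinct density as a determinant of densities via \Cref{prp:karlin_mcgregor_for_coalescing_polya} and \Cref{lmm:density_of_different_points} (where the Karlin--McGregor formula already performs the cancellation you would need your involution for), and then running an induction on the number of genuinely separated pairs, converting one pair at a time with the inclusion--exclusion identity above (justified by \Cref{prp:limiting_monoton} and \Cref{lmm:witness_functions_and_stopping_times}) and finishing with elementary row and column operations on the determinant. To complete your approach you would either have to carry out the involution in full, or fall back on an inductive reduction of this kind.
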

    Before we prove the theorem we need to recall the Karlin-McGregor formula \cite{karlin_mcgregor:1959} and apply it in a special case.
    For that first recall the definition of birth and death processes.
    A discrete time process $(X_n)_{n=0}^{\infty}$ whose state space is the set of integers is a birth and death process, if
    \begin{align*}
        \valseg{X_{n+1} =j+1 | \, X_n =j} &= p_{n,j},\\
        \valseg{X_{n+1} =j-1 | \, X_n =j} &= q_{n,j},\\
        \valseg{X_{n+1} =j | \, X_n =j} &= 1-(p_{n,j}+q_{n,j}).
    \end{align*}
    Then the next theorem holds.
    \begin{thm}[Karlin-McGregor formula] \label{thm:karlin_mcgregor}
    Consider a birth and death Markov process whose state space is the set of integers.
    Let us denote its transition probability function with $P_{i,j}(t)$.
    Then for $i_1 < i_2 < \dots i_n$ and $j_1 < j_2 < \dots j_n$ point in the state space
    the determinant
    \[
    \begin{vNiceMatrix}[c, margin]
        P_{i_1,j_1}(t) & P_{i_1,j_2}(t) & \dots & P_{i_1,j_n}(t)\\
        P_{i_2,j_1}(t) & P_{i_2,j_2}(t) & \dots & P_{i_2,j_n}(t)\\
        \vdots & \vdots & \ddots & \vdots \\
        P_{i_n,j_1}(t) & P_{i_n,j_2}(t) & \dots & P_{i_n,j_n}(t)\\
    \end{vNiceMatrix}
    \]
    is the probability that $n$ independent processes started form $i_1, i_2, \dots, i_n$
    end up in states $j_1, j_2, \dots, j_n$ until time $t$ without any two of them being in the same state during the given time.
    \end{thm}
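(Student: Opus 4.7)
The plan is to carry out the classical Karlin--McGregor / Lindström--Gessel--Viennot sign-reversing involution argument. Expanding the determinant,
\[
\det\bigl(P_{i_k,j_l}(t)\bigr)_{k,l=1}^n
= \sum_{\sigma \in S_n}\mathrm{sgn}(\sigma)\prod_{k=1}^n P_{i_k,j_{\sigma(k)}}(t),
\]
and by the independence of the $n$ processes, $\prod_{k=1}^n P_{i_k,j_{\sigma(k)}}(t)$ is exactly the probability that independent trajectories $\omega_1,\dots,\omega_n$ started at $i_1,\dots,i_n$ end at $j_{\sigma(1)},\dots,j_{\sigma(n)}$ at time $t$. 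So the determinant becomes a signed sum over pairs $(\sigma,\omega)$ of $\mathrm{sgn}(\sigma)\cdot\mathbb{P}(\omega)$.

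Next I would define a sign-reversing involution $\Phi$ on the set of pairs $(\sigma,\omega)$ in which at least one \emph{collision} occurs, meaning some $\omega_a(s) = \omega_b(s)$ for indices $a \neq b$ and some time $s \leq t$. Among such configurations, choose the smallest $s$ and then the lexicographically smallest colliding pair $(a,b)$, and swap the post-$s$ tails of $\omega_a$ and $\omega_b$. By the Markov property and the independence of the processes, $\Phi$ preserves $\mathbb{P}(\omega)$, while it transposes the endpoints of the $a$-th and $b$-th trajectories, so the associated permutation changes by the transposition $(a,b)$ and the sign flips. Since $\Phi^2 = \mathrm{id}$, all colliding configurations cancel in pairs.

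What survives is the contribution of collision-free configurations. Here I would invoke the geometric feature of birth and death processes: since each step changes the state by at most one, two integer-valued trajectories starting in the order $i_a < i_b$ and ending in the reversed order $j_{\sigma(a)} > j_{\sigma(b)}$ must coincide at some common space-time point. Hence every collision-free configuration lies in the $\sigma = \mathrm{id}$ term and corresponds to an ordered family of non-colliding trajectories from $i_k$ to $j_k$. The remaining sum therefore equals the probability of $n$ independent non-colliding paths travelling from $(i_k)$ to $(j_k)$ in time $t$, which is precisely the claim.

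The main obstacle is making the collision and the involution precise in discrete time where a step may be $+1$, $-1$, or $0$: two trajectories can attempt to swap their order in a single step by jumping past each other without sharing an integer state. Either one records this swap as a half-integer crossing and treats it as a collision, or one does a careful case analysis of such swap moves so that the map $(\sigma,\omega) \mapsto \Phi(\sigma,\omega)$ remains a probability-preserving involution and so that the geometric lemma above genuinely forces a shared state. Once this accounting is performed cleanly, the rest of the proof is purely bookkeeping.
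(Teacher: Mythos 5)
The paper does not prove this theorem: it is recalled from Karlin and McGregor's 1959 paper with a citation and used as a black box, so there is no in-paper argument to compare yours against. Your Lindstr\"om--Gessel--Viennot tail-swapping involution is the standard proof, and its skeleton --- the determinant expansion over permutations, the swap of post-collision tails at the first coincidence of the lexicographically first colliding pair, the cancellation of all colliding configurations, and the order-preservation argument forcing $\sigma=\mathrm{id}$ on the survivors --- is the right one.

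However, the difficulty you defer to your final paragraph is not bookkeeping; it is exactly where the theorem, as literally stated for the discrete-time chain defined just above it (independent copies taking simultaneous steps in $\left\{-1,0,+1\right\}$), fails. Take two particles at $i_1=0$, $i_2=1$, one time step, $j_1=0$, $j_2=1$. The configuration in which particle $1$ jumps $0\to 1$ while particle $2$ jumps $1\to 0$ never puts the two particles in the same state, so your involution has nothing to cancel it against, yet it carries the transposition and contributes $-P_{0,1}(1)\,P_{1,0}(1)$ to the determinant. The determinant thus equals $P_{0,0}(1)P_{1,1}(1)-P_{0,1}(1)P_{1,0}(1)$, strictly smaller than the non-collision probability $P_{0,0}(1)P_{1,1}(1)$ whenever $P_{0,1}(1)P_{1,0}(1)>0$. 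Neither of your two proposed repairs rescues the statement as written: declaring half-integer crossings to be collisions changes the event whose probability is being computed, and no case analysis makes the swap move measure-preserving, since $p_{n,x}\,q_{n,x+1}$ need not equal the product of the two holding probabilities. The honest fix is an added hypothesis: either pass to continuous time, where almost surely no two independent copies jump at the same instant, so an order reversal forces a coincidence (this is Karlin and McGregor's original setting), or restrict to chains for which the difference of two independent copies changes by at most one per step, so that a sign change of the difference forces it through zero. The latter is what actually holds in the paper's only application of the formula, \Cref{prp:karlin_mcgregor_for_coalescing_polya}, where the embedded P\'olya walks have increments in $\left\{0,1\right\}$. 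With such a hypothesis stated explicitly, your geometric lemma is genuinely true and the proof closes.
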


    Now we apply the Karlin-McGergor formula for the case of coalescing Pólya Walks started at the same universal time.
    \begin{prp} \label{prp:karlin_mcgregor_for_coalescing_polya}
        For any $\lambda_1 \succ \dots \succ \lambda_n$ such that $\norm{\lambda_1} = \dots = \norm{\lambda_n}$ and for any integers $0 \leq k_1 < \dots < k_n$ we have for any $N \geq \norm{\lambda_1}$
        \[
        \valseg{\bigcap_{j=1}^{n}\left\{X_{\lambda_j}(N) = k_j\right\}}=
            \begin{vNiceMatrix}[c,margin]
                \valseg{X_{\lambda_1}(N)=k_1} &\dots & \valseg{X_{\lambda_1}(N) = k_n}\\
                \vdots  &\ddots & \vdots\\
                \valseg{X_{\lambda_n}(N)=k_1} & \dots & \valseg{X_{\lambda_n}(N) = k_n}\\
            \end{vNiceMatrix}
        \]
    \end{prp}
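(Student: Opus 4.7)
The plan is to reduce the probability for \emph{coalescing} walks to a non-intersection probability for \emph{independent} walks, and then invoke \Cref{thm:karlin_mcgregor}. The first observation is that the $x$-coordinate $X_\lambda(n)$ is itself a time-inhomogeneous Markov chain on $\N$ whose transition kernel depends only on the current position and current time, not on the starting point: given $X_\lambda(n)=x$, one has $X_\lambda(n+1)=x+1$ with probability $x/n$ and $X_\lambda(n+1)=x$ otherwise. In particular it is pure-birth, so two independent copies can swap their order only by passing through a common state.

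Next I would couple the coalescing walks to independent ones exactly as in the proof of \Cref{lmm:witness_functions_and_stopping_times}. Let $\tilde{S}_{\lambda_1},\dots,\tilde{S}_{\lambda_n}$ be independent Pólya walks from $\lambda_1,\dots,\lambda_n$, and build coalescing walks $\hat{S}_{\lambda_j}$ by letting $\hat{S}_{\lambda_j}$ follow $\tilde{S}_{\lambda_j}$ until the first time it hits some $\hat{S}_{\lambda_i}$ with $i<j$, after which it is glued to that trajectory. Then $(\hat{S}_{\lambda_j})_{j=1}^{n}\overset{d}{=}(S_{\lambda_j})_{j=1}^{n}$.

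The key step is the following event identity. On $\bigcap_{j}\{X_{\lambda_j}(N)=k_j\}$ with $k_1<\dots<k_n$ the coalescing walks end at strictly distinct positions; by \Cref{prp:partial_order_and_trajectories} they stay ordered at every intermediate time, and strict ordering at $N$ forces strict ordering on the whole interval $[\norm{\lambda_1},N]$, so no coalescence can have occurred. Under the coupling this forces $\hat{S}_{\lambda_j}(t)=\tilde{S}_{\lambda_j}(t)$ for all $j$ and all $t\le N$, and hence
\[
\valseg{\bigcap_{j=1}^{n}\{X_{\lambda_j}(N)=k_j\}}=\valseg{\bigcap_{j=1}^{n}\{\tilde{X}_{\lambda_j}(N)=k_j\}\cap\bigcap_{i<j,\,t\le N}\{\tilde{X}_{\lambda_i}(t)\neq\tilde{X}_{\lambda_j}(t)\}},
\]
where $\tilde{X}_{\lambda_j}\defeq\kek{\tilde{S}_{\lambda_j}}$.

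The right-hand side is exactly the non-intersection probability for $n$ independent copies of the chain $X$ started from the ordered values $\kek{\lambda_1}<\dots<\kek{\lambda_n}$, which by \Cref{thm:karlin_mcgregor} equals the claimed determinant (using that the marginal transition law does not depend on the starting point). The chief conceptual step is the event identity above; the only technical caveat worth flagging is that \Cref{thm:karlin_mcgregor} is phrased for a time-homogeneous birth-and-death chain, whereas our $X_\lambda$ is time-inhomogeneous and pure-birth. The standard reflection-swap proof of Karlin-McGregor nevertheless carries over unchanged in this setting, since two independent walks that coincide at some time can be freely exchanged from that moment on without altering the joint law, and this is the only structural input required.
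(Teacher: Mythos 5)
Your proof is correct and takes essentially the same route as the paper: embed the $x$-coordinates as a (pure-birth) birth-and-death chain started at a common universal time and apply the Karlin--McGregor formula. The paper's own proof is only two sentences and leaves the reduction from coalescing to independent walks implicit; your coupling and the event identity equating the two probabilities supply exactly the detail it omits, and your caveat about time-inhomogeneity is harmless since the reflection argument (and indeed the paper's own statement of the theorem, with rates $p_{n,j},q_{n,j}$) already accommodates it.
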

    \begin{proof}
    Clearly we can embed the Pólya Walks into $\Z$ as a birth and death process.
    Moreover, since the walks are started at the same universal time and they run until the same universal time, the formula in \Cref{thm:karlin_mcgregor} is applicable.
    \end{proof}

    With the help of \Cref{prp:karlin_mcgregor_for_coalescing_polya} we can find the density for a special case, when none of the coalescing Pólya Walks meet in finite time.
    Then the density is stated in the following lemma.
    \begin{lmm} \label{lmm:density_of_different_points}
    For any $\lambda_1 \succ \lambda_2 \succ \dots \succ \lambda_n$ having $\norm{\lambda_1} = \norm{\lambda_2} = \dots = \norm{\lambda_n}$ the joint density at $0 < x_1 < x_2 < \dots < x_n < 1$ is
    \[
    \begin{pmatrix}
        \lambda_1 & \lambda_1 & \lambda_2 & \lambda_2 &\dots & \lambda_n & \lambda_n\\
        x_1 & x_1 & x_2 & x_2 &\dots & x_n & x_n\\
    \end{pmatrix}
    =
    \begin{vNiceMatrix}[c, margin]
        f_{\lambda_1}(x_1) & f_{\lambda_1}(x_2) & \dots & f_{\lambda_1}(x_n)\\
        f_{\lambda_2}(x_1) & f_{\lambda_2}(x_2) & \dots & f_{\lambda_2}(x_n)\\
        \vdots & \vdots & \ddots & \vdots \\
        f_{\lambda_n}(x_1) & f_{\lambda_n}(x_2) & \dots & f_{\lambda_n}(x_n)\\
    \end{vNiceMatrix}.
    \]
    \end{lmm}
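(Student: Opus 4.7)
The plan is to recover the density by passing to the continuum limit in the Karlin-McGregor formula of \Cref{prp:karlin_mcgregor_for_coalescing_polya}, expressing the joint distribution of the limiting variables $Z_{\lambda_j}$ through the discrete joint distributions of the positions $X_{\lambda_j}(N)$ at large times $N$.

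Fix real numbers $0 < x_1 < \dots < x_n < 1$ and pick small $\varepsilon_j > 0$ so that the intervals $I_j \defeq [x_j, x_j + \varepsilon_j)$ are pairwise disjoint and contained in $(0,1)$. By the almost sure convergence $Z_{\lambda_j}(N) \to Z_{\lambda_j}$ from \cref{eq:beta_convergence}, together with the fact that the Beta distribution places no mass on the interval endpoints,
\[
\valseg{\bigcap_{j=1}^n \halm{Z_{\lambda_j} \in I_j}}
= \lim_{N\to\infty}\,\sum_{\mathbf{k}}
\valseg{\bigcap_{j=1}^n \halm{X_{\lambda_j}(N) = k_j}},
\]
where $\mathbf{k} = (k_1,\dots,k_n)$ ranges over integer tuples with $k_j/N \in I_j$. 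Disjointness of the $I_j$ forces $k_1 < \dots < k_n$ once $N$ is large enough, so \Cref{prp:karlin_mcgregor_for_coalescing_polya} applies and each summand equals $\det\bigl[\valseg{X_{\lambda_i}(N) = k_j}\bigr]_{i,j=1}^n$.

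Next I would invoke a local central limit theorem for the Pólya walk, of the form $N \cdot \valseg{X_\lambda(N) = \lfloor N x\rfloor} \to f_\lambda(x)$ locally uniformly in $x \in (0,1)$. Since $X_\lambda(N)$ is an affine shift of a Beta-Binomial, its pmf has a closed form in terms of Gamma functions, and the asymptotic follows from Stirling's formula with error uniform on compact subsets of $(0,1)$. Multiplying every entry of the $n\times n$ determinant by $N$ and dividing the whole sum by $N^n$ turns the right-hand side above into a Riemann sum of mesh $1/N$ converging to
\[
\int_{I_1\times\dots\times I_n}\det\bigl[f_{\lambda_i}(x_j)\bigr]_{i,j=1}^n\,dx_1\cdots dx_n.
\]
Comparing with \cref{eq:density_definition} in the specialisation $l_k = r_k = \lambda_k$ shows that $\det[f_{\lambda_i}(x_j)]$ and the joint density integrate to the same value over every box $\prod_j I_j$ inside the simplex $\{x_1 < \dots < x_n\} \cap (0,1)^n$, so they agree almost everywhere.

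The main obstacle is controlling the local limit theorem with enough uniformity in $x$ to interchange the $N \to \infty$ limit with the summation inside the determinant expansion. Once Stirling is pushed through with a uniform error on each compact interval $[x_j, x_j + \varepsilon_j] \subset (0,1)$, the interchange is routine: the determinant expansion is a finite sum of products of one-dimensional pmfs, each of which admits a dominated convergence argument.
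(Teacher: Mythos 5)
Your argument is correct, but it diverges from the paper's proof at the decisive step, and the difference is worth noting. Both proofs start identically: disjoint intervals, almost sure convergence of $Z_{\lambda_j}(N)$ to $Z_{\lambda_j}$, partitioning over lattice values $k_1 < \dots < k_n$, and the Karlin--McGregor determinant from \Cref{prp:karlin_mcgregor_for_coalescing_polya}. The paper then exploits multilinearity of the determinant to resum over the $k_j$ \emph{before} letting $N \to \infty$: since each column depends on a single $k_j$, the sum of determinants collapses to a single determinant whose entries are the interval probabilities $\valseg{Z_{\lambda_i}(N) \in I_j}$, and these converge by weak convergence of the marginals alone. No pointwise asymptotics for the pmf are ever needed. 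You instead take the limit entry-wise via a local limit theorem $N\cdot\valseg{X_\lambda(N) = \lfloor Nx\rfloor} \to f_\lambda(x)$ and read the resulting expression as a Riemann sum. This works --- the pmf is Beta--Binomial, the closed form $\valseg{X_\lambda(N)=k} = \binom{N-a-b}{k-a}B(k,N-k)/B(a,b)$ does yield the stated asymptotic locally uniformly on compacts of $(0,1)$ by Stirling, and uniform convergence of the entries gives uniform convergence of the $n\times n$ determinant, so the Riemann sum converges to $\int_{\prod_j I_j}\det[f_{\lambda_i}(x_j)]\,dx$. What your route costs is precisely this extra analytic input (a uniform local CLT that you sketch but do not prove), which the paper's softer resummation argument makes unnecessary; what it buys is a direct pointwise identification of the density at $(x_1,\dots,x_n)$ rather than an identification almost everywhere via integrals over boxes. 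Your closing remark about dominated convergence in the determinant expansion is, in effect, a rediscovery of the paper's multilinearity trick, so you were one observation away from the shorter proof.
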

    \begin{proof}
        Let $0 < a_1 < b_1 < a_2 < b_2 < \dots <a_n < b_n < 1$. Then let us denote
        \[
        I_j \defeq \left[a_j,b_j \right) \qquad \text{for $j=1,2,\dots,n$}.
        \]
        Then by almost sure convergence and the definition of $Z_{\lambda_j}$
        \begin{equation} \label{eq:def_of_density_for_n}
        \valseg{\bigcap_{j=1}^{n}\left\{Z_{\lambda_j} \in I_j\right\}}=
        \lim_{N \to \infty} \valseg{\bigcap_{j=1}^{n}\left\{Z_{\lambda_j}(N) \in I_j\right\}} =
        \lim_{N \to \infty} \valseg{\bigcap_{j=1}^{n}\left\{\frac{X_{\lambda_j}(N)}{N} \in I_j\right\}}.
        \end{equation}
        Notice that by the definition of $I_j$ for $j=1,2,\dots,n$
        \begin{equation} \label{eq:intervals_for_the_density}
        \valseg{\bigcap_{j=1}^{n}\left\{X_{\lambda_j}(N) \in N \cdot I_j\right\}} =
        \valseg{\bigcap_{j=1}^{n}\left\{X_{\lambda_j}(N) \in [N a_j, \; N b_j)\right\}}.
        \end{equation}
        Then by just partitioning the event and using that $X_{\lambda_j}(N)$ is discrete by \cref{eq:def_of_density_for_n} and \cref{eq:intervals_for_the_density}
        \begin{equation} \label{eq:before_karlin_mcgregor}
        \valseg{\bigcap_{j=1}^{n}\left\{Z_{\lambda_j} \in I_j\right\}} =
        \lim_{N \to \infty} \sum_{j=1}^{n} \;\sum_{k_j=\floor*{N a_j}}^{\floor*{Nb_j}}
        \valseg{X_{\lambda_1}(N) = k_1, \dots, X_{\lambda_n}(N) = k_n}.
        \end{equation}
        However, after large enough $N \geq 1$ since $a_1 < b_1 < \dots < a_n < b_n$ the integers $k_j$ are disjoint for any $j=1,2,\dots,n$.
        Therefore, we can apply the Karlin-McGregor formula in our special case in \Cref{prp:karlin_mcgregor_for_coalescing_polya}.
        It follows that for large enough $N \geq 1$
        \begin{equation} \label{eq:after_karlin_mcgregor}
            \valseg{X_{\lambda_1}(N) = k_1, \dots, X_{\lambda_n}(N) = k_n}=
            \begin{vNiceMatrix}[c,margin]
                \valseg{X_{\lambda_1}(N)=k_1} &\dots & \valseg{X_{\lambda_1}(N) = k_n}\\
                \vdots  &\ddots & \vdots\\
                \valseg{X_{\lambda_n}(N)=k_1} & \dots & \valseg{X_{\lambda_n}(N) = k_n}\\
            \end{vNiceMatrix}
        \end{equation}
        Moreover, using the elementary property of the determinant (namely the fact that if two matrices only differ in one column the sum of their determinants equals to the determinant of their sum)
        \begin{equation} \label{eq:elementary_determinant}
        \setlength{\jot}{12pt}
        \begin{split}
            &\sum_{j=1}^{n} \;\sum_{k_j=\floor*{N a_j}}^{\floor*{Nb_j}}
            \begin{vNiceMatrix}[c,margin]
                \valseg{X_{\lambda_1}(N)=k_1} &\dots & \valseg{X_{\lambda_1}(N) = k_n}\\
                \vdots &\ddots & \vdots\\
                \valseg{X_{\lambda_n}(N)=k_1} &\dots & \valseg{X_{\lambda_n}(N) = k_n}\\
            \end{vNiceMatrix}\\&=
            \begin{vNiceMatrix}[c,margin]
                \sum_{k_1=\floor*{Na_1}}^{\floor*{Nb_1}}\valseg{X_{\lambda_1}(N)=k_1}&\dots & \sum_{k_n=\floor*{Na_n}}^{\floor*{Nb_n}}\valseg{X_{\lambda_1}(N) = k_n}\\
                \vdots &\ddots & \vdots\\
                \sum_{k_1=\floor*{Na_1}}^{\floor*{Nb_1}}\valseg{X_{\lambda_n}(N)=k_1} &\dots & \sum_{k_n=\floor*{Na_n}}^{\floor*{Nb_n}}\valseg{X_{\lambda_n}(N) = k_n}\\
            \end{vNiceMatrix}\\
            &=
            \begin{vNiceMatrix}[c,margin]
                \valseg{X_{\lambda_1}(N) \in N \cdot I_1} & \dots & \valseg{X_{\lambda_1}(N) \in N \cdot I_n}\\
                \vdots & \ddots & \vdots\\
                \valseg{X_{\lambda_n}(N) \in N \cdot I_1} & \dots & \valseg{X_{\lambda_n}(N) \in N \cdot I_n}\\
            \end{vNiceMatrix}\\
            &=
            \begin{vNiceMatrix}[c,margin]
                \valseg{Z_{\lambda_1}(N) \in I_1} & \dots & \valseg{Z_{\lambda_1}(N) \in I_n}\\
                \vdots & \ddots & \vdots\\
                \valseg{Z_{\lambda_n}(N) \in I_1} & \dots & \valseg{Z_{\lambda_n}(N) \in I_n}\\
            \end{vNiceMatrix}
        \end{split}
        \end{equation}
        Therefore, after comparing \cref{eq:before_karlin_mcgregor}, \cref{eq:after_karlin_mcgregor} and \cref{eq:elementary_determinant} it follows from the almost sure convergence and the fact that we only integrate every column with respect to different variables
        \begin{equation*}
        \begin{split}
        \valseg{\bigcap_{j=1}^{n}\left\{Z_j \in I_j\right\}} &= \lim_{N \to \infty}
        \begin{vNiceMatrix}[c,margin]
                \valseg{Z_{\lambda_1}(N) \in I_1} & \dots & \valseg{Z_{\lambda_1}(N) \in I_n}\\
                \vdots & \ddots & \vdots\\
                \valseg{Z_{\lambda_n}(N) \in I_1} & \dots & \valseg{Z_{\lambda_n}(N) \in I_n}\\
        \end{vNiceMatrix}\\
        &=
        \begin{vNiceMatrix}[c,margin]
            \int_{I_1}f_{\lambda_1}(x_1) \, dx_1 & \dots & \int_{I_n}f_{\lambda_1}(x_n) \, dx_n\\
            \vdots & \ddots & \vdots\\
            \int_{I_1}f_{\lambda_n}(x_1) \, dx_1 & \dots & \int_{I_n}f_{\lambda_n}(x_n) \, dx_n\\
        \end{vNiceMatrix}\\
        &=
        \int_{I_1 \times \dots \times I_n}
        \begin{vNiceMatrix}
            f_{\lambda_1}(x_1) & \dots & f_{\lambda_1}(x_n)\\
            \vdots & \ddots & \vdots\\
            f_{\lambda_n}(x_1) & \dots & f_{\lambda_n}(x_n)\\
        \end{vNiceMatrix}
        \prod_{j=1}^{n}dx_j.
        \end{split}
        \end{equation*}
        Therefore, the density is indeed the one stated in the lemma.
    \end{proof}

    Now with all of the needed tools in our hand we can prove the main theorem of this section.
        
    \begin{proof}[Proof of \Cref{thm:joint_density}]
     We show by induction on $k$ that for any $1 \leq k$ and any $n \geq k$ the joint density for the points $l_1 \succeq r_1 \succ \dots \succ l_k \succeq r_k \succ l_{k+1} = r_{k+1} \succ \dots \succ l_n = r_n$ is
        \[
        \begin{pmatrix}
        l_1 & r_1 & l_2 & r_2 &\dots & l_n & r_n\\
        x_1 & x_1 & x_2 & x_2 &\dots & x_n & x_n\\
        \end{pmatrix}
        \]
        \[
        =
        \begin{vNiceMatrix}[c,margin]
            \CodeBefore
            \rectanglecolor{blue!20}{1-1}{5-5}
            \rectanglecolor{orange!20}{6-1}{8-5}
            \rectanglecolor{red!20}{1-6}{8-8}
            \Body
            1-F_{l_1}(x_1) & f_{l_1}(x_1) & \dots  & 1-F_{l_1}(x_k) & f_{l_1}(x_k) & f_{l_1}(x_{k+1})&\dots &
            f_{l_1}(x_n)\\
            -F_{r_1}(x_1) & f_{r_1}(x_1) & \dots & 1-F_{r_1}(x_k) & f_{r_1}(x_k) & f_{r_1}(x_{k+1})& \dots &
            f_{r_1}(x_n)\\
            \vdots & \vdots & \ddots & \vdots & \vdots & \vdots &\ddots & \vdots\\
            -F_{l_k}(x_1) & f_{l_k}(x_1) & \dots & 1-F_{l_k}(x_k) & f_{l_k}(x_k) & f_{l_k}(x_{k+1})&\dots &
            f_{l_k}(x_n)\\
            -F_{r_k}(x_1) & f_{r_k}(x_1) & \dots & -F_{r_k}(x_k) & f_{r_k}(x_k) & f_{r_k}(x_{k+1})&\dots &
            f_{r_k}(x_n)\\
            -F_{l_{k+1}}(x_1) & f_{l_{k+1}}(x_1) & \dots & -F_{l_{k+1}}(x_k) & f_{l_{k+1}}(x_k) & f_{l_{k+1}}(x_{k+1})& \dots & f_{l_{k+1}}(x_n)\\
            \vdots & \vdots & \ddots & \vdots & \vdots & \vdots &\ddots &\vdots\\
            -F_{l_n}(x_1) & f_{l_n}(x_1) & \dots & -F_{l_n}(x_2) & f_{l_n}(x_2) & f_{l_n}(x_{k+1})&\dots &
            f_{l_n}(x_n)\\
        \end{vNiceMatrix}
        \]
        Notice the \textcolor{blue}{upper left $2k\times 2k$ sub-matrix} is the same as in the one in the stated theorem.
        The \textcolor{orange}{lower left $\zarojel{n-k}\times 2k$ sub-matrix} is similar, but it contains only every second row.
        Moreover, the \textcolor{red}{right $\zarojel{n+k}\times\zarojel{n-k}$ sub-matrix} is similar, but it does not contain the cumulative distribution functions just the densities.
        Therefore, applying it for $k=n$ gives the formula stated in the theorem.
        
        First for $k=1$. Suppose $l_1 = r_1$. Then adding \textcolor{blue}{one row} to \textcolor{orange}{another} we get by \Cref{lmm:density_of_different_points}
        \begin{small}
        \[
        \begin{vNiceMatrix}[c,margin]
            \CodeBefore
            \rowcolor{orange!20}{1}
            \rowcolor{blue!20}{2}
            \Body
            1-F_{l_1}(x_1) & f_{l_1}(x_1) & f_{l_1}(x_2) & \dots & f_{l_1}(x_n)\\
            \rowcolor{blue!20}
            -F_{l_1}(x_1) & f_{l_1}(x_1) & f_{l_1}(x_2) & \dots & f_{l_1}(x_n)\\
            -F_{l_2}(x_1) & f_{l_2}(x_1) & f_{l_2}(x_2) & \dots & f_{l_2}(x_n)\\
            \vdots & \vdots & \vdots & \ddots & \vdots\\
            -F_{l_n}(x_1) & f_{l_n}(x_1) & f_{l_n}(x_2) & \dots & f_{l_n}(x_n)\\
        \end{vNiceMatrix}
        =
        \begin{vNiceMatrix}[c,margin]
            \CodeBefore
            \rowcolor{yellow!20}{1}
            \columncolor{yellow!20}{1}
            \Body
            1 & 0 & 0 & \dots & 0\\
            -F_{l_1}(x_1) & f_{l_1}(x_1) & f_{l_1}(x_2) & \dots & f_{l_1}(x_n)\\
            -F_{l_2}(x_1) & f_{l_2}(x_1) & f_{l_2}(x_2) & \dots & f_{l_2}(x_n)\\
            \vdots & \vdots & \vdots & \ddots & \vdots\\
            -F_{l_n}(x_1) & f_{l_n}(x_1) & f_{l_n}(x_2) & \dots & f_{l_n}(x_n)\\
        \end{vNiceMatrix}
        \]
        \[
        =
        \begin{vmatrix}
            f_{l_1}(x_1) & f_{l_1}(x_2) & \dots & f_{l_1}(x_n)\\
            f_{l_2}(x_1) & f_{l_2}(x_2) & \dots & f_{l_2}(x_n)\\
            \vdots & \vdots & \ddots & \vdots\\
            f_{l_n}(x_1) & f_{l_n}(x_2) & \dots & f_{l_n}(x_n)\\
        \end{vmatrix}
        =
        \begin{pmatrix}
        l_1 & l_1 & l_2 & l_2 &\dots & l_n & l_n\\
        x_1 & x_1 & x_2 & x_2 &\dots & x_n & x_n\\
        \end{pmatrix}.
        \]
        \end{small}
        Now suppose $l_1 \succ r_1$. Then using \Cref{lmm:density_of_different_points} for $n$ and $n+1$
        \[
        \begin{pmatrix}
        l_1 & r_1 & l_2 & l_2 &\dots & l_n & l_n\\
        x_1 & x_1 & x_2 & x_2 &\dots & x_n & x_n\\
        \end{pmatrix}
        \]
        \[
        =
        \begin{pmatrix}
        r_1 & r_1 & l_2 & l_2 &\dots & l_n & l_n\\
        x_1 & x_1 & x_2 & x_2 &\dots & x_n & x_n\\
        \end{pmatrix}
        -
        \int_{-\infty}^{x_1}
        \begin{pmatrix}
        l_1 &  l_1 & r_1 & r_1 &  l_2 & l_2 &\dots & l_n & l_n\\
        u &  u & x_1 & x_1 & x_2 & x_2 &\dots & x_n & x_n\\
        \end{pmatrix} \, du.
        \]
        The right hand side equals to
        \[
         \begin{vmatrix}
            1 & 0 & 0 & \dots & 0\\
            0 & f_{r_1}(x_1) & f_{r_1}(x_2) & \dots & f_{r_1}(x_n)\\
            0 & f_{l_2}(x_1) & f_{l_2}(x_2) & \dots & f_{l_2}(x_n)\\
            \vdots & \vdots & \vdots & \ddots & \vdots\\
            0 & f_{l_n}(x_1) & f_{l_n}(x_2) & \dots & f_{l_n}(x_n)\\
        \end{vmatrix}
        -
        \begin{vmatrix}
            F_{l_1}(x_1) & f_{l_1}(x_1) & f_{l_1}(x_2) & \dots & f_{l_1}(x_n)\\
            F_{r_1}(x_1) & f_{r_1}(x_1) & f_{r_1}(x_2) & \dots & f_{r_1}(x_n)\\
            F_{l_2}(x_1) & f_{l_2}(x_1) & f_{l_2}(x_2) & \dots & f_{l_2}(x_n)\\
            \vdots & \vdots & \vdots & \ddots & \vdots\\
            F_{l_n}(x_1) & f_{l_n}(x_1) & f_{l_n}(x_2) & \dots & f_{l_n}(x_n)\\
        \end{vmatrix}
        \]
        \[
        =
        \begin{vmatrix}
            1-F_{l_1}(x_1) & f_{l_1}(x_1) & f_{l_1}(x_2) & \dots & f_{l_1}(x_n)\\
            -F_{r_1}(x_1) & f_{r_1}(x_1) & f_{r_1}(x_2) & \dots & f_{r_1}(x_n)\\
            -F_{l_2}(x_1) & f_{l_2}(x_1) & f_{l_2}(x_2) & \dots & f_{l_2}(x_n)\\
            \vdots & \vdots & \vdots & \ddots & \vdots\\
            -F_{l_n}(x_1) & f_{l_n}(x_1) & f_{l_n}(x_2) & \dots & f_{l_n}(x_n)\\
        \end{vmatrix}
        \]
        Therefore, the formula is true for $k=1$. Now suppose it holds for $k$. Suppose
        $l_{k+1}=r_{k+1}$. Then by manipulating the determinant again by adding \textcolor{blue}{one row} to \textcolor{orange}{another} we get
        \begin{footnotesize}
        \[
        \begin{vNiceMatrix}[r,margin]
            \CodeBefore
            \rowcolor{orange!20}{6}
            \rowcolor{blue!20}{7}
            \Body
            1-F_{l_1}(x_1) & \dots  & 1-F_{l_1}(x_k) & f_{l_1}(x_k) & 1-F_{l_1}(x_{k+1}) & f_{l_1}(x_{k+1}) &\dots &
            f_{l_1}(x_n)\\
            -F_{r_1}(x_1) & \dots & 1-F_{r_1}(x_k) & f_{r_1}(x_k)  & 1-F_{r_1}(x_{k+1}) & f_{r_1}(x_{k+1}) & \dots &
            f_{r_1}(x_n)\\
            \vdots  & \ddots & \vdots & \vdots & \vdots &\vdots & \ddots & \vdots\\
            -F_{l_k}(x_1) & \dots & 1-F_{l_k}(x_k) & f_{l_k}(x_k) & 1-F_{l_k}(x_{k+1}) & f_{l_k}(x_{k+1}) &\dots &
            f_{l_k}(x_n)\\
            -F_{r_k}(x_1) & \dots & -F_{r_k}(x_k) & f_{r_k}(x_k) & 1-F_{r_k}(x_{k+1}) & f_{r_k}(x_{k+1}) &\dots &
            f_{r_k}(x_n)\\
            -F_{l_{k+1}}(x_1) & \dots & -F_{l_{k+1}}(x_k) & f_{l_{k+1}}(x_k) &  1-F_{l_{k+1}}(x_{k+1}) & f_{l_{k+1}}(x_{k+1}) & \dots & f_{l_{k+1}}(x_n)\\
            -F_{l_{k+1}}(x_1)  & \dots & -F_{l_{k+1}}(x_k) & f_{l_{k+1}}(x_k) &  -F_{l_{k+1}}(x_{k+1}) & f_{l_{k+1}}(x_{k+1}) & \dots & f_{l_{k+1}}(x_n)\\
            \vdots & \ddots & \vdots & \vdots & \vdots &\vdots &\ddots & \vdots\\
            -F_{l_n}(x_1) & \dots & -F_{l_n}(x_k) & f_{l_n}(x_k) & -F_{l_n}(x_{k+1}) & f_{l_n}(x_{k+1})&\dots &
            f_{l_n}(x_n)\\
        \end{vNiceMatrix}
       \]
       \[
        =
        \begin{vNiceMatrix}[c,margin]
            \CodeBefore
            \rowcolor{yellow!20}{6}
            \columncolor{yellow!20}{5}
            \Body
            1-F_{l_1}(x_1) & \dots  & 1-F_{l_1}(x_k) & f_{l_1}(x_k) & 1-F_{l_1}(x_{k+1}) & f_{l_1}(x_{k+1}) &\dots &
            f_{l_1}(x_n)\\
            -F_{r_1}(x_1) & \dots & 1-F_{r_1}(x_k) & f_{r_1}(x_k)  & 1-F_{r_1}(x_{k+1}) & f_{r_1}(x_{k+1}) & \dots &
            f_{r_1}(x_n)\\
            \vdots & \ddots & \vdots & \vdots & \vdots &\vdots & \ddots & \vdots\\
            -F_{l_k}(x_1)  & \dots & 1-F_{l_k}(x_k) & f_{l_k}(x_k) & 1-F_{l_k}(x_{k+1}) & f_{l_k}(x_{k+1}) &\dots &
            f_{l_k}(x_n)\\
            -F_{r_k}(x_1)  & \dots & -F_{r_k}(x_k) & f_{r_k}(x_k) & 1-F_{r_k}(x_{k+1}) & f_{r_k}(x_{k+1}) & \dots &
            f_{r_k}(x_n)\\
            0 & \dots & 0 &0 &1 &0 &\dots & 0\\
            -F_{l_{k+1}}(x_1)  & \dots & -F_{l_{k+1}}(x_k) & f_{l_{k+1}}(x_k) &  -F_{l_{k+1}}(x_{k+1}) & f_{l_{k+1}}(x_{k+1}) &  \dots & f_{l_{k+1}}(x_n)\\
            \vdots & \ddots & \vdots & \vdots & \vdots &\vdots &\ddots & \vdots\\
            -F_{l_n}(x_1)  & \dots & -F_{l_n}(x_k) & f_{l_n}(x_k) & -F_{l_n}(x_{k+1}) & f_{l_n}(x_{k+1}) &\dots &
            f_{l_n}(x_n)\\
        \end{vNiceMatrix}
        \]
        \[
        =
        \begin{vmatrix}
            1-F_{l_1}(x_1)  & \dots  & 1-F_{l_1}(x_k) & f_{l_1}(x_k) & f_{l_1}(x_{k+1})&\dots &
            f_{l_1}(x_n)\\
            -F_{r_1}(x_1) & \dots & 1-F_{r_1}(x_k) & f_{r_1}(x_k) & f_{r_1}(x_{k+1})& \dots &
            f_{r_1}(x_n)\\
            \vdots & \ddots & \vdots & \vdots & \vdots &\ddots & \vdots\\
            -F_{l_k}(x_1) & \dots & 1-F_{l_k}(x_k) & f_{l_k}(x_k) & f_{l_k}(x_{k+1})&\dots &
            f_{l_k}(x_n)\\
            -F_{r_k}(x_1)  & \dots & -F_{r_k}(x_k) & f_{r_k}(x_k) & f_{r_k}(x_{k+1})&\dots &
            f_{r_k}(x_n)\\
            -F_{l_{k+1}}(x_1)  & \dots & -F_{l_{k+1}}(x_k) & f_{l_{k+1}}(x_k) & f_{l_{k+1}}(x_{k+1})& \dots & f_{l_{k+1}}(x_n)\\
            \vdots  & \ddots & \vdots & \vdots & \vdots &\ddots &\vdots\\
            -F_{l_n}(x_1) & \dots & -F_{l_n}(x_2) & f_{l_n}(x_2) & f_{l_n}(x_{k+1})&\dots &
            f_{l_n}(x_n)\\
        \end{vmatrix}
        \]
        \end{footnotesize}
        which is exactly the formula for $k$.
        Now suppose $l_{k+1} \succ r_{k+1}$. First notice
        \begin{footnotesize}
        \[
        \begin{pmatrix}
            l_1 & r_1 & \dots & l_k & r_k & l_{k+1} & r_{k+1} & \dots & l_n & l_n\\
            x_1 & x_1 & \dots & x_k & x_k & x_{k+1} & x_{k+1} & \dots & x_n & x_n\\
        \end{pmatrix}
        \]
        \[
        =
        \begin{pmatrix}
            l_1 & r_1 & \dots & l_k & r_k & r_{k+1} & r_{k+1} & \dots & l_n & l_n\\
            x_1 & x_1 & \dots & x_k & x_k & x_{k+1} & x_{k+1} & \dots & x_n & x_n\\
        \end{pmatrix}
        \]
        \[
        - \int_{x_k}^{x_{k+1}}
        \begin{pmatrix}
            l_1 & r_1 & \dots & l_k & r_k & l_{k+1} & l_{k+1} &r_{k+1} & r_{k+1} & \dots & l_n & l_n\\
            x_1 & x_1 & \dots & x_k & x_k & u & u & x_{k+1} & x_{k+1} & \dots & x_n & x_n\\
        \end{pmatrix} \, du
        \]
        \[
        -
         \begin{pmatrix}
            l_1 & r_1 & \dots & l_k  & l_{k+1} &r_{k+1} & r_{k+1} & \dots & l_n & l_n\\
            x_1 & x_1 & \dots & x_k & x_k & x_{k+1} & x_{k+1} & \dots & x_n & x_n\\
        \end{pmatrix}.
        \]
        \end{footnotesize}
        Then using the formula for $k$ in the case of $n$ and $n+1$ the RHS equals to
        \begin{footnotesize}
        \[
        \begin{vmatrix}
            1-F_{l_1}(x_1) & \dots  & 1-F_{l_1}(x_k) & f_{l_1}(x_k) & 0 & f_{l_1}(x_{k+1}) &\dots &
            f_{l_1}(x_n)\\
            -F_{r_1}(x_1)  & \dots & 1-F_{r_1}(x_k) & f_{r_1}(x_k)  &  0 & f_{r_1}(x_{k+1}) & \dots &
            f_{r_1}(x_n)\\
            \vdots & \ddots & \vdots & \vdots & \vdots &\vdots & \ddots & \vdots\\
            -F_{l_k}(x_1) & \dots & 1-F_{l_k}(x_k) & f_{l_k}(x_k) & 0 & f_{l_k}(x_{k+1}) & \dots &
            f_{l_k}(x_n)\\
            -F_{r_k}(x_1) & \dots & -F_{r_k}(x_k) & f_{r_k}(x_k) & 0 & f_{r_k}(x_{k+1}) & \dots &
            f_{r_k}(x_n)\\
            0  & \dots & 0 & 0 &  1 & 0 & \dots & 0\\
            -F_{r_{k+1}}(x_1)  & \dots & -F_{r_{k+1}}(x_k) & f_{r_{k+1}}(x_k) &  0 & f_{r_{k+1}}(x_{k+1}) & \dots & f_{r_{k+1}}(x_n)\\
            \vdots  & \ddots & \vdots & \vdots & \vdots &\vdots & \ddots & \vdots\\
            -F_{l_n}(x_1) & \dots & -F_{l_n}(x_k) & f_{l_n}(x_k) & 0 & f_{l_n}(x_{k+1}) & \dots &
            f_{l_n}(x_n)\\
        \end{vmatrix}
       \]
       \[
       -
        \begin{vNiceMatrix}[c,margin]
            \CodeBefore
            \columncolor{orange!20}{3}
            \columncolor{blue!20}{5}
            \Body
            1-F_{l_1}(x_1) & \dots  & 1-F_{l_1}(x_k) & f_{l_1}(x_k) & F_{l_1}(x_{k+1})-F_{l_1}(x_k) & f_{l_1}(x_{k+1}) & \dots &
            f_{l_1}(x_n)\\
            -F_{r_1}(x_1)  & \dots & 1-F_{r_1}(x_k) & f_{r_1}(x_k)  &  F_{r_1}(x_{k+1})-F_{r_1}(x_k) & f_{r_1}(x_{k+1}) &  \dots &
            f_{r_1}(x_n)\\
            \vdots  & \ddots & \vdots & \vdots & \vdots &\vdots & \ddots & \vdots\\
            -F_{l_k}(x_1)  & \dots & 1-F_{l_k}(x_k) & f_{l_k}(x_k) & F_{l_k}(x_{k+1})-F_{l_k}(x_k) & f_{l_k}(x_{k+1}) & \dots &
            f_{l_k}(x_n)\\
            -F_{r_k}(x_1)  & \dots & -F_{r_k}(x_k) & f_{r_k}(x_k) & F_{r_k}(x_{k+1})-F_{r_k}(x_k) & f_{r_k}(x_{k+1}) &\dots &
            f_{r_k}(x_n)\\
            -F_{l_{k+1}}(x_1) & \dots & -F_{l_{k+1}}(x_k) & f_{l_{k+1}}(x_k) &  F_{l_{k+1}}(x_{k+1})-F_{l_{k+1}}(x_k) & f_{l_{k+1}}(x_{k+1}) & \dots & f_{l_{k+1}}(x_n)\\
            -F_{r_{k+1}}(x_1)  & \dots & -F_{r_{k+1}}(x_k) & f_{r_{k+1}}(x_k) &  F_{r_{k+1}}(x_{k+1})-F_{r_{k+1}}(x_k)  & f_{r_{k+1}}(x_{k+1}) & \dots & f_{r_{k+1}}(x_n)\\
            \vdots &  \ddots & \vdots & \vdots & \vdots &\vdots &\ddots & \vdots\\
            -F_{l_n}(x_1)  & \dots & -F_{l_n}(x_k) & f_{l_n}(x_k) & F_{l_n}(x_{k+1})-F_{l_n}(x_k)  & f_{l_n}(x_{k+1}) &\dots &
            f_{l_n}(x_n)\\
        \end{vNiceMatrix}
       \]
       \[
       -
        \begin{vNiceMatrix}[c,margin]
        \CodeBefore
            \rowcolor{red!20}{5}
            \rowcolor{red!20}{6}
            \Body
            1-F_{l_1}(x_1) & \dots  & 1-F_{l_1}(x_k) & f_{l_1}(x_k) & 0 & f_{l_1}(x_{k+1}) &\dots &
            f_{l_1}(x_n)\\
            -F_{r_1}(x_1) & \dots & 1-F_{r_1}(x_k) & f_{r_1}(x_k)  &  0 & f_{r_1}(x_{k+1}) & \dots &
            f_{r_1}(x_n)\\
            \vdots  & \ddots & \vdots & \vdots & \vdots &\vdots & \ddots & \vdots\\
            -F_{l_k}(x_1)  & \dots & 1-F_{l_k}(x_k) & f_{l_k}(x_k) & 0 & f_{l_k}(x_{k+1}) & \dots &
            f_{l_k}(x_n)\\
            -F_{l_{k+1}}(x_1)  & \dots & -F_{l_{k+1}}(x_k) & f_{l_{k+1}}(x_k) & 0 & f_{l_{k+1}}(x_{k+1}) & \dots &
            f_{l_{k+1}}(x_n)\\
            0  & \dots & 0 & 0 &  1 & 0  & \dots & 0\\
            -F_{r_{k+1}}(x_1)  & \dots & -F_{r_{k+1}}(x_k) & f_{r_{k+1}}(x_k) &  0 & f_{r_{k+1}}(x_{k+1}) &  \dots & f_{r_{k+1}}(x_n)\\
            \vdots  & \ddots & \vdots & \vdots & \vdots &\vdots &\ddots & \vdots\\
            -F_{l_n}(x_1) & \dots & -F_{l_n}(x_k) & f_{l_n}(x_k) & 0 & f_{l_n}(x_{k+1}) &\dots &
            f_{l_n}(x_n)\\
        \end{vNiceMatrix}.
       \]
       \end{footnotesize}
       First, we multiply \textcolor{blue}{one column} of the second one with minus one and add \textcolor{orange}{one column} to that one.
       Then we change \textcolor{red}{two rows} of the third one.
       After the expression equals to
       \begin{footnotesize}
       \[
        \begin{vNiceMatrix}[c,margin]
            \CodeBefore
            \rectanglecolor{blue!20}{1-5}{5-5}
            \rectanglecolor{blue!20}{7-5}{9-5}
            \rectanglecolor{red!20}{6-5}{6-5}
            \rectanglecolor{orange!20}{6-1}{6-4}
            \rectanglecolor{orange!20}{6-6}{6-9}
            \Body
            1-F_{l_1}(x_1) & \dots  & 1-F_{l_1}(x_k) & f_{l_1}(x_k) & 0 & f_{l_1}(x_{k+1})&\dots &
            f_{l_1}(x_n)\\
            -F_{r_1}(x_1)  & \dots & 1-F_{r_1}(x_k) & f_{r_1}(x_k)  &  0 & f_{r_1}(x_{k+1}) &  \dots &
            f_{r_1}(x_n)\\
            \vdots  & \ddots & \vdots & \vdots & \vdots &\vdots & \ddots & \vdots\\
            -F_{l_k}(x_1)  & \dots & 1-F_{l_k}(x_k) & f_{l_k}(x_k) & 0 & f_{l_k}(x_{k+1}) & \dots &
            f_{l_k}(x_n)\\
            -F_{r_k}(x_1)  & \dots & -F_{r_k}(x_k) & f_{r_k}(x_k) & 0 & f_{r_k}(x_{k+1}) & \dots &
            f_{r_k}(x_n)\\
            0 &  \dots & 0 & 0 &  1 & 0 & \dots & 0\\
            -F_{r_{k+1}}(x_1)  & \dots & -F_{r_{k+1}}(x_k) & f_{r_{k+1}}(x_k) &  0 & f_{r_{k+1}}(x_{k+1}) & \dots & f_{r_{k+1}}(x_n)\\
            \vdots  & \ddots & \vdots & \vdots & \vdots &\vdots &\ddots & \vdots\\
            -F_{l_n}(x_1)  & \dots & -F_{l_n}(x_k) & f_{l_n}(x_k) & 0 & f_{l_n}(x_{k+1}) &\dots &
            f_{l_n}(x_n)\\
        \end{vNiceMatrix}
       \]
       \[
        +
        \begin{vNiceMatrix}[c,margin]
            \CodeBefore
            \rectanglecolor{blue!20}{1-5}{4-5}
            \rectanglecolor{blue!20}{7-5}{9-5}
            \rectanglecolor{red!20}{6-5}{6-5}
            \rectanglecolor{orange!20}{6-1}{6-4}
            \rectanglecolor{orange!20}{6-6}{6-9}
            \rectanglecolor{green!20}{5-5}{5-5}
            \rectanglecolor{yellow!20}{5-1}{5-4}
            \rectanglecolor{yellow!20}{5-6}{5-9}
            \Body
            1-F_{l_1}(x_1)  & \dots  & 1-F_{l_1}(x_k) & f_{l_1}(x_k) & 1-F_{l_1}(x_{k+1}) & f_{l_1}(x_{k+1}) &\dots &
            f_{l_1}(x_n)\\
            -F_{r_1}(x_1)  & \dots & 1-F_{r_1}(x_k) & f_{r_1}(x_k)  &  1-F_{r_1}(x_{k+1})& f_{r_1}(x_{k+1}) &  \dots &
            f_{r_1}(x_n)\\
            \vdots  & \ddots & \vdots & \vdots & \vdots &\vdots & \ddots & \vdots\\
            -F_{l_k}(x_1)  & \dots & 1-F_{l_k}(x_k) & f_{l_k}(x_k) & 1-F_{l_k}(x_{k+1})& f_{l_k}(x_{k+1}) & \dots &
            f_{l_k}(x_n)\\
            -F_{r_k}(x_1)  & \dots & -F_{r_k}(x_k) & f_{r_k}(x_k) & -F_{r_k}(x_{k+1}) & f_{r_k}(x_{k+1}) & \dots &
            f_{r_k}(x_n)\\
            -F_{l_{k+1}}(x_1)  & \dots & -F_{l_{k+1}}(x_k) & f_{l_{k+1}}(x_k) &  -F_{l_{k+1}}(x_{k+1})& f_{l_{k+1}}(x_{k+1}) &  \dots & f_{l_{k+1}}(x_n)\\
            -F_{r_{k+1}}(x_1)  & \dots & -F_{r_{k+1}}(x_k) & f_{r_{k+1}}(x_k) &  -F_{r_{k+1}}(x_{k+1})  & f_{r_{k+1}}(x_{k+1}) &  \dots & f_{r_{k+1}}(x_n)\\
            \vdots  & \ddots & \vdots & \vdots & \vdots &\vdots &\ddots & \vdots\\
            -F_{l_n}(x_1)  & \dots & -F_{l_n}(x_k) & f_{l_n}(x_k) & -F_{l_n}(x_{k+1})& f_{l_n}(x_{k+1}) & \dots &
            f_{l_n}(x_n)\\
        \end{vNiceMatrix}
        \]
        \[
       +
        \begin{vNiceMatrix}[c,margin]
            \CodeBefore
            \rectanglecolor{blue!20}{1-5}{4-5}
            \rectanglecolor{blue!20}{6-5}{9-5}
            \rectanglecolor{green!20}{5-5}{5-5}
            \rectanglecolor{yellow!20}{5-1}{5-4}
            \rectanglecolor{yellow!20}{5-6}{5-9}
            \Body
            1-F_{l_1}(x_1)  & \dots  & 1-F_{l_1}(x_k) & f_{l_1}(x_k) & 0 & f_{l_1}(x_{k+1}) &\dots &
            f_{l_1}(x_n)\\
            -F_{r_1}(x_1)  & \dots & 1-F_{r_1}(x_k) & f_{r_1}(x_k)  &  0 & f_{r_1}(x_{k+1}) &  \dots &
            f_{r_1}(x_n)\\
            \vdots  & \ddots & \vdots & \vdots & \vdots &\vdots & \ddots & \vdots\\
            -F_{l_k}(x_1)  & \dots & 1-F_{l_k}(x_k) & f_{l_k}(x_k) & 0 & f_{l_k}(x_{k+1}) & \dots &
            f_{l_k}(x_n)\\
            0 & \dots & 0 & 0 &  1 & 0 & \dots & 0\\
            -F_{l_{k+1}}(x_1)  & \dots & -F_{l_{k+1}}(x_k) & f_{l_{k+1}}(x_k) & 0 & f_{l_{k+1}}(x_{k+1}) & \dots &
            f_{l_{k+1}}(x_n)\\
            -F_{r_{k+1}}(x_1)  & \dots & -F_{r_{k+1}}(x_k) & f_{r_{k+1}}(x_k) &  0 & f_{r_{k+1}}(x_{k+1}) & \dots & f_{r_{k+1}}(x_n)\\
            \vdots & \ddots & \vdots & \vdots & \vdots &\vdots &\ddots & \vdots\\
            -F_{l_n}(x_1) & \dots & -F_{l_n}(x_k) & f_{l_n}(x_k) & 0 & f_{l_n}(x_{k+1}) & \dots &
            f_{l_n}(x_n)\\
        \end{vNiceMatrix}.
       \]
       \end{footnotesize}
       Then since the first and the third determinants have the same elements as the second one except one row and column with all zeroes except their intersection, the above sum equals to
       \begin{footnotesize}
           \[
           \begin{vNiceMatrix}[c,margin]
            \CodeBefore
            \rectanglecolor{blue!20}{1-5}{4-5}
            \rectanglecolor{blue!20}{7-5}{9-5}
            \rectanglecolor{red!20}{6-5}{6-5}
            \rectanglecolor{orange!20}{6-1}{6-4}
            \rectanglecolor{orange!20}{6-6}{6-9}
            \rectanglecolor{green!20}{5-5}{5-5}
            \rectanglecolor{yellow!20}{5-1}{5-4}
            \rectanglecolor{yellow!20}{5-6}{5-9}
            \Body
            1-F_{l_1}(x_1)  & \dots  & 1-F_{l_1}(x_k) & f_{l_1}(x_k) & 1-F_{l_1}(x_{k+1}) & f_{l_1}(x_{k+1})&\dots &
            f_{l_1}(x_n)\\
            -F_{r_1}(x_1)  & \dots & 1-F_{r_1}(x_k) & f_{r_1}(x_k)  &  1-F_{r_1}(x_{k+1})& f_{r_1}(x_{k+1}) & \dots &
            f_{r_1}(x_n)\\
            \vdots & \ddots & \vdots & \vdots & \vdots &\vdots & \ddots & \vdots\\
            -F_{l_k}(x_1) & \dots & 1-F_{l_k}(x_k) & f_{l_k}(x_k) & 1-F_{l_k}(x_{k+1})& f_{l_k}(x_{k+1}) & \dots &
            f_{l_k}(x_n)\\
            -F_{r_k}(x_1) & \dots & -F_{r_k}(x_k) & f_{r_k}(x_k) & 1-F_{r_k}(x_{k+1}) & f_{r_k}(x_{k+1}) & \dots &
            f_{r_k}(x_n)\\
            -F_{l_{k+1}}(x_1) & \dots & -F_{l_{k+1}}(x_k) & f_{l_{k+1}}(x_k) &  1-F_{l_{k+1}}(x_{k+1})& f_{l_{k+1}}(x_{k+1}) &  \dots & f_{l_{k+1}}(x_n)\\
            -F_{r_{k+1}}(x_1)  & \dots & -F_{r_{k+1}}(x_k) & f_{r_{k+1}}(x_k) &  -F_{r_{k+1}}(x_{k+1})  & f_{r_{k+1}}(x_{k+1}) &  \dots & f_{r_{k+1}}(x_n)\\
            \vdots & \ddots & \vdots & \vdots & \vdots &\vdots &\ddots & \vdots\\
            -F_{l_n}(x_1) & \dots & -F_{l_n}(x_k) & f_{l_n}(x_k) & -F_{l_n}(x_{k+1})& f_{l_n}(x_{k+1}) &\dots &
            f_{l_n}(x_n)\\
        \end{vNiceMatrix}
           \]
       \end{footnotesize}
       Therefore, the formula is true for $k+1$.
    \end{proof}

    With the help of the joint densities we can express the probability for any possible scenario of finite number of coalescing Pólya Walks started from the same time by integrating it
        \[
         \int_{0 < x_1 < x_2 < \dots < x_n <1} 
        \begin{pmatrix}
        l_1 & r_1 & l_2 & r_2 &\dots & l_n & r_n\\
        x_1 & x_1 & x_2 & x_2 &\dots & x_n & x_n\\
        \end{pmatrix}
        \prod_{k=1}^{n}dx_k.
        \]
    However, it is a closed formula for large $n$ it is hard to calculate.
    For small $n$ we will give some exact calculations in the following section.

    \subsection{The number of components}
    We now investigate the number of components in the case of the Pólya Web (\Cref{fig:components_polya}).
    We defined it in \Cref{section_3} and proved a limit theorem in the general case.
    Now we just need to do some calculations to obtain the a formula for the Pólya Web.
    Recall from \cref{eq:number_of_components} that the number of components of the random graph which we get if we start coalescing walks from all point up to level $n \in \N$ is
    \[
    C_n = 1 + \sum_{k=1}^{n} \mathds{1}\left[\tau_{\lambda_{k-1},\lambda_{k}} = \infty \right],
    \qquad \text{where} \; \lambda_{k} = (k,n-k).
    \]
    To calculate the expectation, we need the probabilities
    \[
    \valseg{\tau_{\lambda_{k-1},\lambda_{k}} = \infty} \qquad \text{where} \; \lambda_{k} = (k,n-k)
    \]
    for all $1 \leq k \leq n$. The formula for them is stated in the following proposition.
    \begin{prp} \label{prp:hypergeometric}
        For all $0 \leq k \leq n-1$ if $\lambda_k = (k,n-k)$, then
        \[
        \valseg{\tau_{\lambda_{k-1},\lambda_{k}} = \infty} = {n-1 \choose k}^2{2(n-1) \choose 2k}^{-1}.
        \]
    \end{prp}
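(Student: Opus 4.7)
The plan is to reduce the event $\{\tau_{\lambda_{k-1},\lambda_k}=\infty\}$ to a comparison of the limiting Beta variables via \Cref{lmm:witness_functions_and_stopping_times}, evaluate the resulting probability using the determinantal joint density from \Cref{lmm:density_of_different_points}, and finally simplify the answer through a Vandermonde convolution combined with a symmetry argument on the binomial summands.

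First, by \Cref{lmm:witness_functions_and_stopping_times} we have $\valseg{\tau_{\lambda_{k-1},\lambda_k}=\infty} = \valseg{Z_{\lambda_{k-1}} < Z_{\lambda_k}}$, using that $\lambda_{k-1} \succ \lambda_k$. Since $\norm{\lambda_{k-1}}=\norm{\lambda_k}=n$, \Cref{lmm:density_of_different_points} applied with two points gives the joint density of $(Z_{\lambda_{k-1}}, Z_{\lambda_k})$ on $\{0<x_1<x_2<1\}$ as
\[
f_{\lambda_{k-1}}(x_1) f_{\lambda_k}(x_2) - f_{\lambda_{k-1}}(x_2) f_{\lambda_k}(x_1),
\]
where the marginals are the $\text{Beta}(k-1, n-k+1)$ and $\text{Beta}(k, n-k)$ densities. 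Integrating the two summands separately and swapping $x_1 \leftrightarrow x_2$ in the second yields
\[
\valseg{Z_{\lambda_{k-1}} < Z_{\lambda_k}} = 2\,\valseg{\tilde Z_{\lambda_{k-1}} < \tilde Z_{\lambda_k}} - 1,
\]
where the tilde-variables are now \emph{independent} with the same marginals.

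Second, I would evaluate the independent comparison via the order-statistic representation: $\text{Beta}(a, n-a)$ is the law of the $a$-th order statistic of $n-1$ i.i.d.\ $\text{Uniform}(0,1)$ variables. Realising $\tilde Z_{\lambda_{k-1}}$ and $\tilde Z_{\lambda_k}$ as the $(k-1)$-th and $k$-th order statistics of two independent uniform samples of size $n-1$ and merging the $2(n-1)$ points in increasing order, the type labels form a uniformly random binary word with $n-1$ symbols of each kind. A short case analysis identifies $\{\tilde Z_{\lambda_{k-1}} < \tilde Z_{\lambda_k}\}$ with the event that the prefix of length $2(k-1)$ contains at least $k-1$ symbols of the first type, giving the hypergeometric expression
\[
\valseg{\tilde Z_{\lambda_{k-1}} < \tilde Z_{\lambda_k}} = \binom{2(n-1)}{2(k-1)}^{-1} \sum_{m\geq k-1} \binom{n-1}{m}\binom{n-1}{2(k-1)-m}.
\]

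Finally, the Vandermonde identity $\sum_m \binom{n-1}{m}\binom{n-1}{2(k-1)-m} = \binom{2(n-1)}{2(k-1)}$ together with the symmetry of the summand under $m\mapsto 2(k-1)-m$ lets us rewrite the above tail as half of the full sum plus half of the central term $\binom{n-1}{k-1}^2$. Substituting back into the identity $2\valseg{\cdot} - 1$ cancels the constant and the total, leaving the claimed quotient of binomial coefficients (up to a re-indexing that the reader can verify by hand). I expect the main obstacle to be the case analysis identifying $\{\tilde Z_{\lambda_{k-1}} < \tilde Z_{\lambda_k}\}$ with the hypergeometric event on the merged word: the ranks of the order statistics and the length of the prefix are easy to get off by one. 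A direct evaluation of the Beta integrals produced by \Cref{lmm:density_of_different_points} is an available alternative, but it is noticeably more tedious.
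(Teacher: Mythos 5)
Your proposal is correct, and it shares the paper's starting point: both reduce $\valseg{\tau_{\lambda_{k-1},\lambda_k}=\infty}$ via \Cref{lmm:witness_functions_and_stopping_times} and \Cref{lmm:density_of_different_points} to the integral of the $2\times 2$ determinant $f_{\lambda_{k-1}}(x)f_{\lambda_k}(y)-f_{\lambda_{k-1}}(y)f_{\lambda_k}(x)$ over $\{0<x<y<1\}$. Where you diverge is in evaluating that integral: the paper dismisses this as ``a small calculus exercise'' (i.e.\ direct manipulation of Beta integrals), whereas you antisymmetrize to get $2\,\valseg{\tilde Z_{\lambda_{k-1}}<\tilde Z_{\lambda_k}}-1$ for \emph{independent} copies and then compute the independent comparison combinatorially, realising $\mathrm{Beta}(a,n-a)$ as the $a$-th order statistic of $n-1$ uniforms, merging the two samples, and reading off a hypergeometric tail which Vandermonde plus the symmetry $m\mapsto 2(k-1)-m$ collapses to $\binom{n-1}{k-1}^2\binom{2(n-1)}{2(k-1)}^{-1}$. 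All the individual steps check out, including the prefix criterion (the $i$-th $U$ precedes the $j$-th $V$ iff at least $i$ of the first $i+j-1$ merged points are $U$'s, which with $i=k-1$, $j=k$ gives your length-$2(k-1)$ prefix), and your route has the advantage of being fully explicit where the paper is not. The ``re-indexing'' you flag is real and is in fact a discrepancy in the paper's own statement: your answer $\binom{n-1}{k-1}^2\binom{2(n-1)}{2(k-1)}^{-1}$ is the correct value for the pair $(\lambda_{k-1},\lambda_k)$ (e.g.\ for $n=3$, $k=2$ it gives $2/3$, which one can confirm by hand, while the displayed formula would give $1$), and the stated formula $\binom{n-1}{k}^2\binom{2(n-1)}{2k}^{-1}$ with range $0\le k\le n-1$ matches the pair $(\lambda_k,\lambda_{k+1})$ instead; this is consistent with how the formula is later summed in \Cref{prp:expectation_and_variance_bounds}.
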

    \begin{figure}[H]
      \centering
      \includegraphics[width=0.6\textwidth]{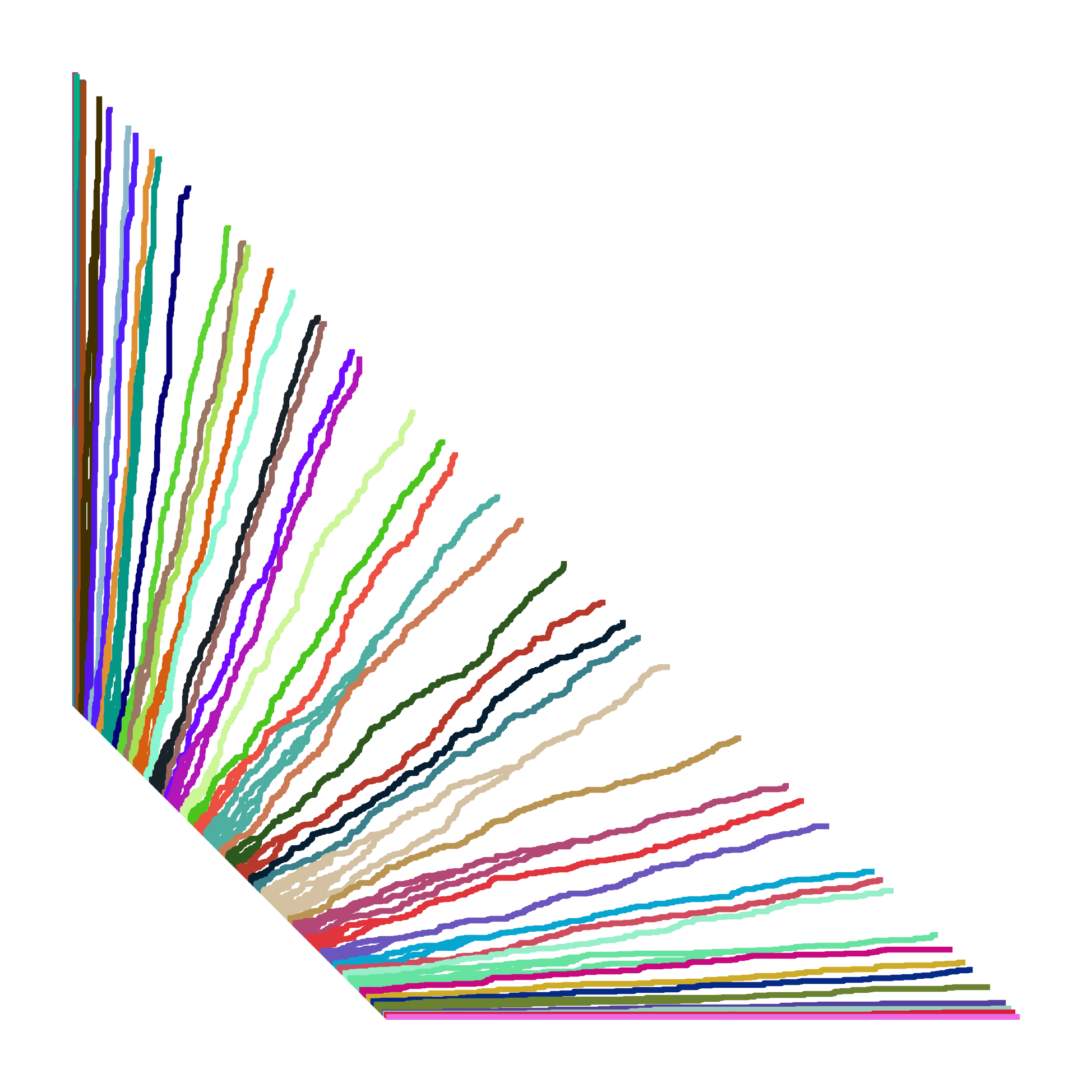}
      \caption{Coalescing Pólya Walks started at the same universal time. Different colours denote different components.}
      \label{fig:components_polya}
    \end{figure}
    \begin{proof}
    By \Cref{thm:joint_density} and \Cref{lmm:density_of_different_points}
    \[
    \valseg{\tau_{\lambda_{k-1},\lambda_{k}} = \infty} = \int_{0}^{1} \int_{0}^{y}
    \begin{vNiceMatrix}[c,margin]
        f_{\lambda_{k-1}}(x) & f_{\lambda_{k-1}}(y)\\
        f_{\lambda_{k}}(x) & f_{\lambda_{k}}(y)
    \end{vNiceMatrix}
    \,dx\,dy.
    \]
    Then a small calculus exercise gives us the desired result.
    \end{proof}
    Now we just need the asymptotics of the sum of those probabilities and the asymptotics of the expectation will follow immediately.
    \begin{prp} \label{prp:hpyergeometric_sum_limit}
    Let $\varepsilon > 0$,
      \[
      n_1 \defeq \ceil*{\varepsilon \sqrt{n}} \qquad \mathrm{and} \qquad n_2 \defeq n - \ceil*{\varepsilon \sqrt{n}}.
      \]
      Then
      \[
      \lim_{n \to \infty} \frac{1}{\sqrt{n}}\sum_{k=n_1}^{n_2} {n-1 \choose k}^{2} {2(n-1) \choose 2k}^{-1} = \sqrt{\pi}.
      \]
    \end{prp}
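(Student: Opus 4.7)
The plan is to reduce the ratio $\binom{n-1}{k}^{2}\binom{2(n-1)}{2k}^{-1}$ to a quotient of central binomial coefficients and then apply Stirling's formula uniformly. Expanding factorials yields the identity
\[
\binom{n-1}{k}^{2}\binom{2(n-1)}{2k}^{-1} = \frac{\binom{2k}{k}\binom{2(n-1-k)}{n-1-k}}{\binom{2(n-1)}{n-1}},
\]
which is convenient because central binomials obey the sharp asymptotic $\binom{2j}{j} = \frac{4^j}{\sqrt{\pi j}}(1+O(1/j))$.

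Next, I would apply this Stirling approximation uniformly on the range of summation. Because $k \geq \lceil \varepsilon\sqrt n\rceil$ and $n-1-k \geq \lceil\varepsilon\sqrt n\rceil-1$, both $k$ and $n-1-k$ are of order at least $\sqrt n$, so the relative error in the Stirling asymptotic is $O(1/\sqrt n)$ uniformly in $k \in [n_1,n_2]$. Writing $m \defeq n-1$, this gives
\[
\binom{m}{k}^{2}\binom{2m}{2k}^{-1} = \frac{1}{\sqrt{\pi}}\sqrt{\frac{m}{k(m-k)}}\,(1+o(1)),
\]
with $o(1) \to 0$ uniformly in $k$. Setting $x_k = k/m$ and summing,
\[
\frac{1}{\sqrt n}\sum_{k=n_1}^{n_2}\binom{n-1}{k}^{2}\binom{2(n-1)}{2k}^{-1} = \sqrt{\frac{m}{\pi n}}\cdot\frac{1}{m}\sum_{k=n_1}^{n_2}\frac{1}{\sqrt{x_k(1-x_k)}}\,(1+o(1)).
\]

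The inner sum is a Riemann sum for $g(x) \defeq 1/\sqrt{x(1-x)}$ over $[n_1/m, n_2/m]$, with mesh $1/m$. A primitive is $2\arcsin(\sqrt x)$, so $\int_{a}^{1-a} g(x)\,dx \to \int_0^1 g(x)\,dx = \pi$ as $a \to 0$, and here $a = n_1/m \to 0$. Combined with $\sqrt{m/(\pi n)}\to 1/\sqrt{\pi}$, this produces the limit $\pi/\sqrt{\pi} = \sqrt{\pi}$.

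The main obstacle is justifying the Riemann sum convergence at the singular endpoints, where $g$ is unbounded. This I would handle by exploiting the fact that $g$ is decreasing on $[0,1/2]$ and increasing on $[1/2,1]$: on each monotone branch, each term $g(x_k)/m$ lies between $\int_{x_k}^{x_{k+1}}g$ and $\int_{x_{k-1}}^{x_k}g$. Summing these inequalities bounds the Riemann sum from above and below by integrals of $g$ over intervals that converge to $[0,1]$ (the single term near $k \approx m/2$ contributes negligibly since $g$ is bounded there). Both bounds tend to $\pi$, and together with the uniform Stirling error this establishes the asymptotic equivalence and hence the claimed limit.
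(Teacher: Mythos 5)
Your proposal is correct and follows essentially the same route as the paper: Stirling's approximation uniformly on $n_1\le k\le n_2$ reduces the summand to $\frac{1}{\sqrt{\pi}}\sqrt{\frac{m}{k(m-k)}}$, and the resulting Riemann sum converges to $\frac{1}{\sqrt{\pi}}\int_0^1\frac{dx}{\sqrt{x(1-x)}}=\sqrt{\pi}$. You in fact supply two details the paper leaves implicit --- the identity $\binom{m}{k}^2\binom{2m}{2k}^{-1}=\binom{2k}{k}\binom{2(m-k)}{m-k}\binom{2m}{m}^{-1}$ which makes the uniform error control transparent, and the monotonicity argument justifying the Riemann sum at the singular endpoints --- so this is a valid and slightly more complete version of the paper's own proof.
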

    \begin{proof}
    By Stirling's approximation it can be shown that there are sequences $a_n, b_n$ such that
    \[
    \lim_{n \to \infty} a_n = \lim_{n \to \infty} b_n = 1
    \]
    and
    \begin{equation} \label{eq:after_stirling}
      a_n \cdot I_n
      \leq \sum_{k=n_1}^{n_2} \frac{1}{\sqrt{n}}{n-1 \choose k}^{2} {2(n-1) \choose 2k}^{-1}
      \leq b_n \cdot I_n,
    \end{equation}
    where
    \[
    I_n \defeq \frac{1}{\sqrt{\pi}}\sum_{k=n_1}^{n_2}\ \frac{1}{n-1} \frac{1}{\sqrt{\frac{k}{n-1}\left(1-\frac{k}{n-1}\right)}}.
    \]
    However,
    \[
    \lim_{n \to \infty}I_n = \frac{1}{\sqrt{\pi}}\int_{0}^{1}\frac{dx}{\sqrt{x(1-x)}} = \sqrt{\pi}.
    \]
    \end{proof}
    Finally, we can show the asymptotic behaviour of the expectation and give any asymptotic bound on the variance.
    \begin{prp} \label{prp:expectation_and_variance_bounds}
    For any $n \in \N$
    \[
    \varhato{C_n} = \sqrt{n\pi}(1+o(1)) \qquad \text{and} \qquad
    \var{C_n} \leq \sqrt{n\pi}(1+o(1)).
    \]
    \end{prp}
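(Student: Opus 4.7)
The plan is to combine the linearity of expectation, the closed-form hypergeometric probabilities from \Cref{prp:hypergeometric}, the Riemann-sum asymptotics in \Cref{prp:hpyergeometric_sum_limit}, and the negative association of the indicators established in \Cref{crl:limit_indicators_are_negatively_associated}.

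First, I would start from the representation $C_n = 1 + \sum_{k=1}^{n} \mathds{1}_{I_k}$ with $I_k = \halm{\tau_{\lambda_{k-1},\lambda_k} = \infty}$ and $\lambda_k=(k,n-k)$. Linearity of expectation together with \Cref{prp:hypergeometric} yields
\[
\varhato{C_n} = 1 + \sum_{k} \binom{n-1}{k}^{2} \binom{2(n-1)}{2k}^{-1},
\]
summed over the indices valid in \Cref{prp:hypergeometric}; the one or two endpoint terms not literally covered by the formula are probabilities bounded by one and contribute only $O(1)$, which is absorbed in an $o(\sqrt{n})$ error. Next, for $\varepsilon > 0$ I would split the sum at the thresholds $n_1 = \ceil{\varepsilon\sqrt{n}}$ and $n_2 = n-\ceil{\varepsilon\sqrt{n}}$. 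The central block supplies $\sqrt{\pi}\sqrt{n}(1+o(1))$ by \Cref{prp:hpyergeometric_sum_limit}, while the two outer blocks together contain at most $2\ceil{\varepsilon\sqrt{n}}$ terms, each a probability at most one. Dividing by $\sqrt{n}$, letting $n\to\infty$ first and then $\varepsilon\to 0$, gives $\varhato{C_n} = \sqrt{n\pi}(1+o(1))$.

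For the variance, I would write $C_n = 1 + S_n$ with $S_n = \sum_{k=1}^{n}\mathds{1}_{I_k}$, so that $\var{C_n} = \var{S_n}$, and expand
\[
\var{S_n} = \sum_{k=1}^{n}\var{\mathds{1}_{I_k}} + 2\sum_{1 \leq j < k \leq n}\cov{\mathds{1}_{I_j},\mathds{1}_{I_k}}.
\]
The key point is that by \Cref{crl:limit_indicators_are_negatively_associated} applied with the two monotone increasing functions $f(x)=x$ and $g(y)=y$, every cross covariance $\cov{\mathds{1}_{I_j},\mathds{1}_{I_k}}$ is non-positive. Together with the trivial bound $\var{\mathds{1}_{I_k}} \leq \varhato{\mathds{1}_{I_k}}$ valid for any $\{0,1\}$-valued indicator, this gives
\[
\var{C_n} \leq \sum_{k=1}^{n}\varhato{\mathds{1}_{I_k}} = \varhato{C_n} - 1 = \sqrt{n\pi}(1+o(1)),
\]
which is precisely the claimed variance bound.

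Neither half is a serious obstacle; both are short consequences of the structural machinery already in place. The one item that deserves a touch of care is the boundary book-keeping in the expectation computation, where one must verify that the sandwich between $\sqrt{\pi}$ and $\sqrt{\pi}+O(\varepsilon)$ actually pinches down when $\varepsilon \to 0$, but this is the completely standard use of \Cref{prp:hpyergeometric_sum_limit}.
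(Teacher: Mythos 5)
Your proof is correct, and the expectation half is essentially identical to the paper's: the same split at $n_1=\ceil*{\varepsilon\sqrt{n}}$, $n_2=n-\ceil*{\varepsilon\sqrt{n}}$, the central block handled by \Cref{prp:hpyergeometric_sum_limit}, the tails bounded by $2\ceil*{\varepsilon\sqrt{n}}$ crude probability-one bounds, and the $\liminf$/$\limsup$ pinch as $\varepsilon\to 0$.

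For the variance the key ingredient is the same in both arguments --- negative association (\Cref{crl:limit_indicators_are_negatively_associated}) makes every cross-covariance non-positive, so $\var{C_n}\leq\sum_k\var{\mathds{1}_{I_k}}$ --- but your finishing step is cleaner than the paper's. You simply use $\var{\mathds{1}_{I_k}}=p_k-p_k^2\leq p_k$ to conclude $\var{C_n}\leq\varhato{C_n}-1$, whereas the paper keeps the $-p_k^2$ terms and then spends a separate Stirling-based estimate showing $\sum_k p_k^2=O(\varepsilon\sqrt{n})+O(\log n)$, an estimate that is not needed for the stated one-sided bound (and whose intermediate identity in the paper contains a sign slip). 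Your shortcut loses nothing for the claim as stated; the paper's extra work would only matter if one wanted a sharper asymptotic for the variance, e.g.\ showing that $\var{C_n}$ is genuinely of order $\sqrt{n}$ minus a lower-order correction.
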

    \begin{proof}
        First, we show the equality for the expectation.
        By \cref{eq:number_of_components} and \Cref{prp:hypergeometric} if we denote $\lambda_k = (k,n-k)$
        \begin{equation} \label{eq:components_expressed_with_indicators_1}
        \varhato{C_n} = 1 + \sum_{k=0}^{n-1}\valseg{\tau_{\lambda_{k-1},,\lambda_{k}} = \infty}
        = 1 +\sum_{k=0}^{n-1}{n-1 \choose k}^2{2(n-1) \choose2k}^{-1}.
        \end{equation}
        Then for any $\varepsilon > 0$ and
        \[
        n_1 \defeq \ceil*{\varepsilon\sqrt{n}} \qquad \text{and} \qquad
        n_2 \defeq n - \ceil*{\varepsilon\sqrt{n}}
        \]
        if we denote
        \[
        J_n \defeq \sum_{k=n_1}^{n_2} \frac{1}{\sqrt{n}}{n-1 \choose k}^{2} {2(n-1) \choose 2k}^{-1}
        \]
        we have the following inequality
        \[
        \frac{1}{\sqrt{n}} + J_n \leq \frac{\varhato{C_n}}{\sqrt{n}} \leq \frac{2\ceil*{\varepsilon\sqrt{n}}}{\sqrt{n}} + J_n.
        \]
        By \Cref{prp:hpyergeometric_sum_limit}
        \[
        \sqrt{\pi} \leq \liminf_{n \to \infty} \frac{\varhato{C_n}}{\sqrt{n}}
        \leq \limsup_{n \to \infty} \frac{\varhato{C_n}}{\sqrt{n}} \leq \sqrt{\pi}+2\varepsilon,
        \]
        which exactly means
        \[
        \varhato{C_n} = \sqrt{n\pi}(1+o(1)).
        \]
        Now we show the inequality for the variance.
        Recall that the indicator variables by \Cref{crl:limit_indicators_are_negatively_associated} are negatively associated. Therefore, by \cref{eq:bound_on_the_marginals} the covariance of any two different variables is not positive.
        Then
        \begin{multline*}
        \var{C_n} \leq \sum_{k=0}^{n-1}\var{\mathds{1}\left[\tau_{\lambda_{k-1},\lambda_{k}} = \infty\right]}
        = \sum_{k=0}^{n-1} \left[\valseg{\tau_{\lambda_{k-1},\lambda_{k}} = \infty}-\valseg{\tau_{\lambda_{k-1},\lambda_{k}} = \infty}^2\right]\\
        = \varhato{C_n} + \sum_{k=0}^{n-1}\valseg{\tau_{\lambda_{k-1},\lambda_{k}} = \infty}^2.
        \end{multline*}
        Applying the Stirling approximation again with the notations of \Cref{prp:hpyergeometric_sum_limit} for any $\varepsilon > 0$ there is a $c > 0$ such that
        \begin{multline*}
        \sum_{k=0}^{n-1}\valseg{\tau_{\lambda_{k-1},\lambda_{k}} = \infty}^2 \leq
        2\ceil*{\varepsilon \cdot \sqrt{n}} + b_n^2 \cdot \sum_{k=n_1}^{n_2}\left[\frac{1}{\sqrt{n}} \frac{1}{\sqrt{\frac{k}{n}\left(1-\frac{k}{n}\right)}}\right]^2\\ \leq
        2\ceil*{\varepsilon \cdot \sqrt{n}} + b_n^2 \cdot \sum_{k=n_1}^{n_2}\left(\frac{1}{k}+\frac{1}{n-k}\right)
        \leq 2\ceil*{\varepsilon \cdot \sqrt{n}} + b_n^2 \cdot c\log(n).
        \end{multline*}
        Therefore, for any $\varepsilon > 0$
        \[
        \lim_{n\to\infty} \frac{1}{\sqrt{n}}\sum_{k=0}^{n-1}\valseg{\tau_{\lambda_{k-1},\lambda_{k}} = \infty}^2 \leq 2\varepsilon.
        \]
        Then it follows by the previous result for $\varhato{C_n}$
        \[
        \limsup_{n \to \infty} \frac{\var{C_n}}{\sqrt{n}} \leq \liminf_{n \to \infty} \frac{\varhato{C_n}}{\sqrt{n}} = \sqrt{\pi}.
        \]
    \end{proof}
    \begin{prp} \label{prp:stong_law_for_the_polya_components}
        \[
        \lim_{n \to \infty} \frac{C_n}{\sqrt{n\pi}} = 1 \quad \text{almost surely}.
        \]
    \end{prp}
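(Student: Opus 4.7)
The plan is to observe that this proposition is now a one-line splicing of two results already in hand: Theorem \ref{thm:convergence_of_the_number_of_components} (the strong law for the component count in any web whose expected number of components grows at least polynomially) and Proposition \ref{prp:expectation_and_variance_bounds} (the precise asymptotic $\varhato{C_n} = \sqrt{n\pi}(1+o(1))$ in the Pólya case). All the real work was done earlier: the geometric fact that the indicators $\mathds{1}_{I_k}$ are negatively associated (via the van den Berg-Kesten-Reimer inequality applied to increasing events on disjoint coordinate sets), the Borel-Cantelli argument packaging the negative-association large-deviation bound into almost sure convergence, and the Karlin-McGregor/Stirling computation that pins down the constant $\sqrt{\pi}$.

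First I would verify the hypothesis of Theorem \ref{thm:convergence_of_the_number_of_components}: the asymptotic $\varhato{C_n} = \sqrt{n\pi}(1+o(1))$ from Proposition \ref{prp:expectation_and_variance_bounds} immediately gives $\varhato{C_n} \geq C \cdot n^{c}(1+o(1))$ with the explicit choice $C = \sqrt{\pi}$ and $c = 1/2$. Applying Theorem \ref{thm:convergence_of_the_number_of_components} then yields
\[
\lim_{n\to\infty} \frac{C_n}{\varhato{C_n}} = 1 \qquad \text{almost surely}.
\]

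Finally I would factor
\[
\frac{C_n}{\sqrt{n\pi}} = \frac{C_n}{\varhato{C_n}} \cdot \frac{\varhato{C_n}}{\sqrt{n\pi}}
\]
and take the limit of each factor separately. The first factor tends to $1$ almost surely by the previous display, and the second factor tends to $1$ deterministically by Proposition \ref{prp:expectation_and_variance_bounds}. Multiplying the limits gives the claim.

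There is essentially no obstacle at this stage; all the substantive analytic and combinatorial work (negative association via BK-R, the Karlin-McGregor determinantal identity for the joint density of the limiting Beta variables, the hypergeometric identity of Proposition \ref{prp:hypergeometric}, and the Stirling-based Riemann-sum argument of Proposition \ref{prp:hpyergeometric_sum_limit}) has already been carried out. The only care required is the routine one of noting that the deterministic convergence of $\varhato{C_n}/\sqrt{n\pi}$ can be multiplied into the almost sure convergence of $C_n/\varhato{C_n}$ without any subtlety, which is legitimate because the former is a convergent sequence of constants.
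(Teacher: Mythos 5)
Your proposal is correct and is essentially identical to the paper's own proof: both verify the hypothesis of Theorem \ref{thm:convergence_of_the_number_of_components} via the asymptotic $\varhato{C_n}=\sqrt{n\pi}(1+o(1))$ from Proposition \ref{prp:expectation_and_variance_bounds} (with $C=\sqrt{\pi}$, $c=1/2$), then split $C_n/\sqrt{n\pi}$ into the almost surely convergent factor $C_n/\varhato{C_n}$ and the deterministic factor $\varhato{C_n}/\sqrt{n\pi}$. Nothing further is needed.
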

    From the formulas in \Cref{prp:expectation_and_variance_bounds} the convergence in probability already follows. However, by \Cref{thm:convergence_of_the_number_of_components} we know that we have almost sure convergence.
    \begin{proof}
        Since by \Cref{prp:expectation_and_variance_bounds}, we have for any $c > 1$
        \[
        \varhato{C_n} = \sqrt{n\pi}(1+o(1)) = \sqrt{\pi} \cdot n^{\frac{1}{2}}(1+o(1).
        \]
        Then applying \Cref{thm:convergence_of_the_number_of_components}
        \[
        \lim_{n \to \infty} \frac{C_n}{\varhato{C_n}} = 1 \quad \text{almost surely}.
        \]
        Moreover, using \Cref{prp:expectation_and_variance_bounds} again
        \[
        \lim_{n \to \infty} \frac{\varhato{C_n}}{\sqrt{n\pi}} = \lim_{n\to\infty}\frac{\sqrt{n\pi}(1+o(1))}{\sqrt{n\pi}} = \lim_{n\to\infty}1+o(1) = 1.
        \]
        Thus the limit holds.
    \end{proof}

    It is clear from \Cref{fig:components_polya} that the points do not contribute uniformly to the number of components.
    At the edges the probability of coalescing is lower compared to the middle.
    Therefore, we have more disjoint clusters at the edges.
    Now we investigate how different regions influence the components.

    For this fix an arbitrary ratio $0 < \alpha < 1$ and a small $0 < \varepsilon < \frac{1}{2}$.
    Let $\beta_n$ a sequence such that for some $c > 0$
    \[
    \beta_n \sim c \cdot n^{\frac{1}{2}+\varepsilon}.
    \]
    We use the standard notation for the relation $\sim$ between sequences.
    That is, $a_n \sim b_n$, if $\lim_{n \to \infty}a_n/b_n = 1$.

    Now let us consider for some $n \in \N$ the points $\lambda$ on the line $x+y=n$ in $\Lambda$ such that
    \[
    \lambda = (j,n-j) \qquad \text{such that} \qquad
    \alpha n - \beta_n \leq j \leq \alpha n + \beta_n.
    \]
    That is we consider approximately the $\beta_n$ neighbourhood of the point $(\alpha n, (1-\alpha)n)$.
    Let $C_{\alpha,n}$ denote the number of components of the infinite random graph obtained by starting coalescing Pólya Walks from these points.
    Then the following proposition holds.

    \begin{prp}
    \[
        \lim_{n \to \infty} \frac{\sqrt{n}}{\beta_n} C_{\alpha,n} = \frac{1}{\sqrt{\pi\alpha(1-\alpha)}}
        \qquad \text{almost surely}.
    \]
    \end{prp}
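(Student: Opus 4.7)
The plan is to mirror the proof of \Cref{prp:stong_law_for_the_polya_components}, restricted to the localized window around $(\alpha n, (1-\alpha)n)$. First I would express, exactly as in \cref{eq:number_of_components},
\[
C_{\alpha,n} = 1 + \sum_{k} \mathds{1}\left[\tau_{\lambda_{k-1},\lambda_{k}} = \infty\right],
\]
where $k$ runs over the integers in the interval $(\alpha n - \beta_n, \alpha n + \beta_n]$ and $\lambda_k = (k, n-k)$. The number of indicators is of order $\beta_n$.

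Next, by \Cref{prp:hypergeometric}, each term has probability $\binom{n-1}{k}^{2}\binom{2(n-1)}{2k}^{-1}$. The key step is to apply Stirling's approximation \emph{uniformly} over the window. Since $\varepsilon < 1/2$, we have $\beta_n/n = O(n^{-1/2+\varepsilon}) \to 0$, so every $k/n$ in the window stays close to $\alpha$ and is bounded away from $0$ and $1$. Copying the calculation from the proof of \Cref{prp:hpyergeometric_sum_limit}, this yields, uniformly in $k$ in the window,
\[
\binom{n-1}{k}^{2}\binom{2(n-1)}{2k}^{-1} = \frac{1}{\sqrt{\pi n\,\alpha(1-\alpha)}}\,(1+o(1)).
\]
Summing and absorbing the additive $+1$ (which is negligible, since $\beta_n/\sqrt{n} \to \infty$) delivers the first-order asymptotic of $\varhato{C_{\alpha,n}}$.

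For the almost-sure convergence, I would invoke the framework of \Cref{thm:convergence_of_the_number_of_components}. The indicators $\mathds{1}[\tau_{\lambda_{k-1},\lambda_k}=\infty]$ with $k$ in the window form a subfamily of the negatively associated family from \Cref{crl:limit_indicators_are_negatively_associated}, and negative association passes to subfamilies. The growth hypothesis $\varhato{C_{\alpha,n}} \geq C \cdot n^{c}(1+o(1))$ required by \Cref{crl:almost_sure_convergence_for_NA} holds with $c = \varepsilon > 0$, since $\beta_n/\sqrt{n} \sim c\,n^{\varepsilon}$. Hence $C_{\alpha,n}/\varhato{C_{\alpha,n}} \to 1$ almost surely, and combining with the asymptotic for the expectation closes the argument.

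The only step requiring genuine care is the uniformity of the Stirling estimate across the window, together with controlling the remainder near its endpoints. Once this is done (and it is essentially immediate from $\beta_n = o(n)$ together with $\alpha \in (0,1)$), everything else is structural and borrows directly from the machinery already assembled in \Cref{section_3} and earlier in the present section.
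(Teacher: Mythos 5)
Your proposal follows essentially the same route as the paper: write $C_{\alpha,n}$ as the sum of the indicators $\mathds{1}\left[\tau_{\lambda_{k-1},\lambda_k}=\infty\right]$ over the window, evaluate the expectation via \Cref{prp:hypergeometric} and the Stirling estimate (the paper phrases this as a Riemann-sum approximation over the shrinking interval $[\alpha-\beta_n/n,\,\alpha+\beta_n/n]$ rather than a uniform pointwise estimate, but the two are equivalent), and then invoke the negative-association machinery of \Cref{section_3} with growth exponent $c=\varepsilon$ to upgrade to almost sure convergence. Your observation that negative association passes to the subfamily of indicators indexed by the window is a point the paper leaves implicit, so if anything your write-up is slightly more careful there.
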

    \begin{proof}
    We just need to modify our previous results. Clearly modifying \cref{eq:components_expressed_with_indicators_1}
    \[
    \varhato{C_{\alpha,n}} = 1 + \sum_{k=\ceil*{\alpha n - \beta_n}}^{\floor*{\alpha n + \beta_n}}\valseg{\tau_{\lambda_{k-1},\lambda_{k}} = \infty} =
    1 + \sum_{k=\ceil*{\alpha n - \beta_n}}^{\floor*{\alpha n + \beta_n}}{n-1 \choose k}^2{2(n-1) \choose2k}^{-1}.
    \]
    Moreover, considering and modifying \cref{eq:after_stirling} as $n \to \infty$ we have
    \[
    \begin{split}
    \frac{n}{\beta_n} \cdot \frac{1}{\sqrt{n}}\sum_{k=\ceil*{\alpha n - \beta_n}}^{\floor*{\alpha n + \beta_n}}{n-1 \choose k}^2{2(n-1) \choose2k}^{-1}
    &\sim
    \frac{n}{\beta_n}\sum_{k=\ceil*{\alpha n - \beta_n}}^{\floor*{\alpha n + \beta_n}}\frac{1}{n-1} \frac{1}{\sqrt{\pi}}\frac{1}{\sqrt{\frac{k}{n-1}\left(1-\frac{k}{n-1}\right)}}\\
    &\sim
    \frac{n}{\beta_n}\int_{\alpha - \frac{\beta_n}{n}}^{\alpha+\frac{\beta_n}{n}}\frac{dx}{\sqrt{\pi x (1-x)}}.
    \end{split}
    \]
    Then, since $0 < \varepsilon < \frac{1}{2}$
    \[
    \frac{n}{\beta_n} \sim c \cdot n^{\frac{1}{2}-\varepsilon} \to \infty \qquad
    \text{as $n \to \infty$},
    \]
    we have that
    \[
    \lim_{n \to \infty}\frac{\sqrt{n}}{\beta_n}\sum_{k=\ceil*{\alpha n - \beta_n}}^{\floor*{\alpha n + \beta_n}}{n-1 \choose k}^2{2(n-1) \choose2k}^{-1}
    = \frac{1}{\sqrt{\pi \alpha (1-\alpha)}}.
    \]
    Therefore,
    \[
    \lim_{n \to \infty}\frac{\sqrt{n}}{\beta_n}\varhato{C_{\alpha,n}} = \frac{1}{\sqrt{\pi \alpha (1-\alpha)}}.
    \]
    Now clearly, for come $C > 0$
    \[
    \varhato{C_{n,\alpha}} \geq C n^{\varepsilon}.
    \]
    Therefore, considering the results so far and \Cref{thm:convergence_of_the_number_of_components} we conclude
    \[
    \lim_{n \to \infty} \frac{\sqrt{n}}{\beta_n}C_{\alpha,n} = \frac{1}{\sqrt{\pi \alpha (1-\alpha)}} \qquad \text{almost surely}.
    \]
    \end{proof}
    This result justifies our conjecture from the picture.
    Since the closer we are to the edges ($\alpha$ is closer to one or to zero), the contribution to the number of components in a small neighbourhood of points is larger. 

    \newpage

    \section{Scaling of the Pólya Web at the edges} \label{section_5}
    \thispagestyle{plain}
    In this section we focus on the scaling of the Pólya Web.
    Our goal is to find a limiting process after an appropriate scaling.
    We have to distinguish between two different regimes.
    The scaling at the edges and in the bulk.
    We will explain the meaning of these terms in the corresponding subsections.
    However, before that it is important to mention that we only managed to find the scaling on the edges.
    Therefore, this section is about the scaling at the edges.
    For the scaling in the bulk, we have a good conjecture but the technical details need to be worked out and will probably be demanding.
    
    \subsection{Poisson approximation}
    Consider the following regime.
    Fix and arbitrary $N,M \in \N$ and real numbers $0 < a < b$.
    Then let us consider the vertices in $\left([a,b] \times [N,M]\right) \cap \Lambda$.
    We are interested in the local asymptotic behaviour of the Pólya Web on the previously given domain.
    First, we show a Poisson approximation argument.
    \begin{figure}[H]
      \centering
      \includegraphics[width=0.6\textwidth]{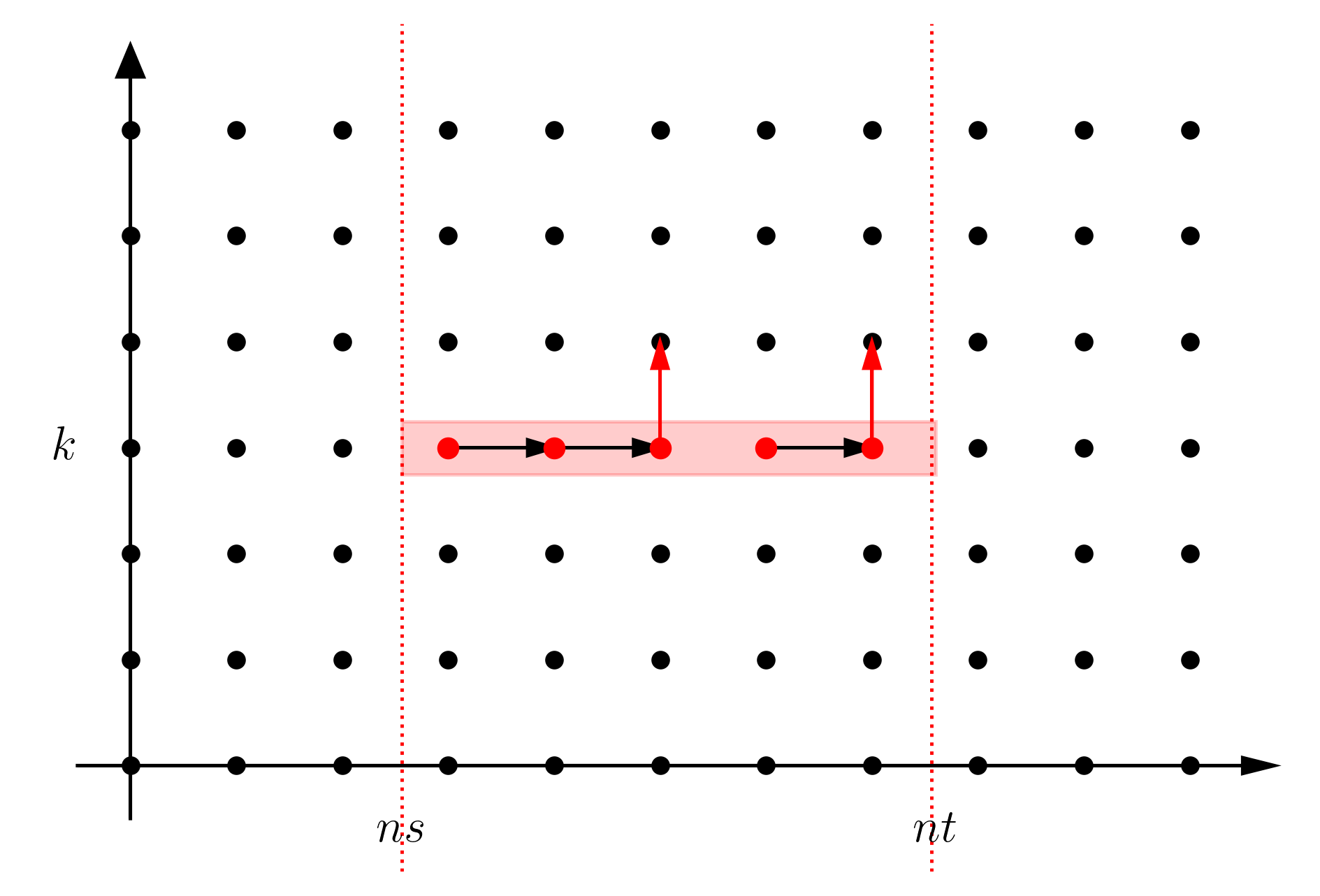}
      \caption{$J_n(k,s,t)$ defined in \cref{eq:number_of_upward_steps} is the number of steps upward in the red box. In this realization it equals to two.}
      \label{fig:poisson_box}
    \end{figure}

    For that let us define for $k \in \N$ and real numbers $0 < s \leq t$ let us denote
    \begin{equation} \label{eq:number_of_upward_steps}
        J_n(k,s,t) \defeq \# \left\{ \lambda \in  \left([ns,nt] \times \left\{ k \right\}\right) \cap \Lambda : \; \omega_\lambda = (0,1)\right\},
    \end{equation}
    that is the number of steps upward from points inside $[ns,nt] \times \{k\}$ in the Pólya Web (\Cref{fig:poisson_box}). In this case we have the following convergence. 
    
    \begin{prp} \label{prp:individual_poisson}
        \[
        J_n(k,s,t) \overset{d}{\to} \text{Poi}\left( \int_{s}^{t} \frac{k}{r}\,dr\right) \quad \text{as $n \to \infty$}.
        \]
    \end{prp}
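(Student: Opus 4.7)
The plan is to recognise $J_n(k,s,t)$ as a sum of independent Bernoulli variables with small parameters whose expectations converge to the right integral, and then apply the classical Poisson limit theorem for triangular arrays.

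First I would unfold the definitions. Since the Pólya Web assigns to each lattice point $\lambda = (j,k)$ an independent Bernoulli sample under $\mathbb{P}_{(j,k)}$ with $\mathbb{P}_{(j,k)}((0,1)) = 1 - \frac{j}{j+k} = \frac{k}{j+k}$, we may write
\[
J_n(k,s,t) = \sum_{j=\lceil ns \rceil}^{\lfloor nt \rfloor} \xi_j^{(n)},
\]
where the $\xi_j^{(n)}$ are mutually independent with $\xi_j^{(n)} \sim \mathrm{Bernoulli}\!\left(\tfrac{k}{j+k}\right)$. The independence is crucial and comes directly from the product structure of $\mathbb{P}$ on $\Omega$.

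Next I would compute the asymptotics of the mean. A direct Riemann-sum argument gives
\[
\mathbb{E}\!\left[J_n(k,s,t)\right] = \sum_{j=\lceil ns \rceil}^{\lfloor nt \rfloor} \frac{k}{j+k} = k\bigl[\ln(nt+k)-\ln(ns+k)\bigr] + O(1/n) \longrightarrow k\ln(t/s) = \int_s^t \frac{k}{r}\,dr.
\]
Simultaneously, $\max_{\lceil ns\rceil \le j \le \lfloor nt\rfloor} \frac{k}{j+k} \le \frac{k}{\lceil ns\rceil + k} \to 0$ as $n \to \infty$, so we are in the classical small-parameter regime for a triangular array of independent Bernoullis.

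Finally I would invoke the classical Poisson convergence theorem: for a triangular array of row-independent Bernoulli($p_{n,j}$) with $\sum_j p_{n,j} \to \lambda$ and $\max_j p_{n,j} \to 0$, the row sums converge in distribution to $\mathrm{Poi}(\lambda)$. (This is immediate from the Le Cam inequality $d_{TV}(\sum_j \xi_j^{(n)}, \mathrm{Poi}(\sum_j p_{n,j})) \le \sum_j p_{n,j}^2 \to 0$ together with continuity of the Poisson law in its parameter, or equivalently from a characteristic-function computation.) Applying this with $\lambda = \int_s^t \frac{k}{r}\,dr$ gives the claimed distributional limit. There is no real obstacle here; the only delicate point is the $O(1/n)$ control in the Riemann-sum approximation of the mean, which is straightforward since the integrand $r \mapsto k/(r+k/n)$ is smooth and bounded on $[s,t]$.
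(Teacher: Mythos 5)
Your proposal is correct and follows essentially the same route as the paper: both identify $J_n(k,s,t)$ as a sum of independent $\mathrm{Bernoulli}\left(\frac{k}{j+k}\right)$ variables over $\lceil ns\rceil \le j \le \lfloor nt\rfloor$ and pass to the Poisson limit via the Riemann-sum convergence of the total mean to $\int_s^t \frac{k}{r}\,dr$. The only difference is cosmetic: the paper carries out the characteristic-function computation explicitly, whereas you invoke the standard triangular-array Poisson limit theorem (Le Cam), which is a perfectly valid shortcut.
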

    \begin{proof}
        First notice by the definition of $J_n(k,s,t)$ (\Cref{fig:poisson_box})
        \begin{equation} \label{eq:jumps_as_ber}
        J_n(k,s,t) \overset{d}{=} \sum_{j=\ceil*{ns}}^{\floor*{nt}} B_{k,j},
        \quad \text{where} \quad
        B_{k,j} \sim \text{Ber}\left(\frac{k}{j+k}\right) \; \text{independent}.
        \end{equation}
        Therefore, the characteristic function of $J_n(k,s,t)$ for any real number $x$ is
        \[
        \prod_{j=\ceil*{ns}}^{\floor*{nt}}\left( \left(1-\frac{k}{j+k}\right)+\frac{k}{j+k}e^{ix}\right)=
        \prod_{j=\ceil*{ns}+k}^{\floor*{nt}+k}\left( \left(1-\frac{k}{j}\right)+\frac{k}{j}e^{ix}\right).
        \]
        After taking the natural logarithm of the characteristic function it converges as $n \to \infty$ to
        \begin{multline*}
        \sum_{j=\ceil*{ns}+k}^{\floor*{nt}+k} \log \left(1+\frac{k}{j}(e^{ix}-1)\right) =
        \sum_{j=\ceil*{ns}+k}^{\floor*{nt}+k}\left[\frac{k}{j}(e^{ix}-1) + o\left(\frac{1}{j^2}\right)\right]\\=
        \sum_{j=\ceil*{ns}+k}^{\floor*{nt}+k} \frac{k}{j}(e^{ix}-1) + o(1) =
        \sum_{j=\ceil*{ns}+k}^{\floor*{nt}+k} \frac{1}{n}\frac{k}{\frac{j}{n}}(e^{ix}-1) + o(1)
        \to \int_{s}^{t}\frac{k}{r}\,dr \cdot (e^{ix}-1).
        \end{multline*}
        It follows that the characteristic function of $J_n(k,s,t)$ converges at every real $x$ as $n \to \infty$ to
        \[
        \exp\left( \int_{s}^{t}\frac{k}{r}\,dr \cdot (e^{ix}-1) \right).
        \]
        This completes the proof.
    \end{proof}
    Therefore, if we consider the processes $J_n(k,s,t)$ on the compact set $\left([a,b]\times[K,L]\right) \cap \Lambda$, then the following convergence holds (\Cref{fig:poisson_multiple_boxes}).
    \begin{prp}\label{prp:joint_poisson}
        For any $M \in \N$ and integers $1 \leq k_j \leq N \; (j=1,\dots,M)$ and real numbers $0 < s_j \leq t_j \; (j=1,\dots,M)$ such that the sets $[s_j,t_j] \times \left\{k_j\right\} \; (j=1,\dots,M)$ are disjoint
        \[
        \left(J_n(k_j,s_j,t_j)\right)_{j=1}^{M} \overset{d}{\to}
        \left(J(k_j,s_j,t_j)\right)_{j=1}^{M},
        \]
        where $J(k_j,s_j,t_j) \; (j=1,\dots,M)$ are independent Poisson with mean
        \[
        \varhato{J(k_j,s_j,t_j)} =
        \int_{s_j}^{t_j} \frac{k_j}{r}\,dr.
        \]
    \end{prp}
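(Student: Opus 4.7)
The plan is to reduce the joint convergence to the marginal convergence already established in \Cref{prp:individual_poisson} by exploiting the product structure of the underlying measure $\mathbb{P} = \bigotimes_{\lambda \in \Lambda} \mathbb{P}_\lambda$. First I would use the representation from \cref{eq:jumps_as_ber} and write, for each $j = 1, \dots, M$,
\[
J_n(k_j, s_j, t_j) \overset{d}{=} \sum_{i = \ceil*{ns_j}}^{\floor*{nt_j}} B_{k_j, i}, \qquad B_{k_j, i} \sim \mathrm{Ber}\!\left(\frac{k_j}{i + k_j}\right),
\]
where each Bernoulli $B_{k_j, i}$ is just the indicator that $\omega_{(i, k_j)} = (0,1)$. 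In fact this is a distributional equality realized on the original probability space, since the definition of $J_n(k_j, s_j, t_j)$ in \cref{eq:number_of_upward_steps} is literally the sum of these indicators.

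The key observation is that the disjointness hypothesis on the sets $[s_j, t_j] \times \{k_j\}$ (as subsets of $\R \times \N$) implies that for every sufficiently large $n$ the lattice sets
\[
\bigl( [ns_j, nt_j] \times \{k_j\} \bigr) \cap \Lambda, \qquad j = 1, \dots, M,
\]
are pairwise disjoint subsets of $\Lambda$. Therefore the collection of Bernoulli variables $\{B_{k_j, i}\}_{j, i}$ consists of indicator functions of coordinates at disjoint points of $\Lambda$, and by the product structure of $\mathbb{P}$ they are \emph{all} mutually independent. In particular, the random variables $J_n(k_1, s_1, t_1), \dots, J_n(k_M, s_M, t_M)$ are independent for every sufficiently large $n$.

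Consequently, the joint characteristic function factors as
\[
\varhato{\exp\!\left(i \sum_{j=1}^M x_j J_n(k_j, s_j, t_j)\right)} = \prod_{j=1}^{M} \varhato{\exp\!\bigl(i x_j J_n(k_j, s_j, t_j)\bigr)},
\]
and by \Cref{prp:individual_poisson} each factor converges as $n \to \infty$ to $\exp\!\bigl((e^{ix_j}-1) \int_{s_j}^{t_j} k_j/r \, dr\bigr)$. The product of these limits is exactly the characteristic function of the vector $(J(k_j, s_j, t_j))_{j=1}^M$ of independent Poisson variables with the claimed means, so the joint convergence in distribution follows from Lévy's continuity theorem. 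There is no real obstacle here beyond recording the disjointness carefully; the main content is the observation that joint convergence is automatic once independence at finite $n$ is in hand.
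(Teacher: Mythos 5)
Your proof is correct and follows essentially the same route as the paper: the disjointness of the sets $[s_j,t_j]\times\{k_j\}$ makes the Bernoulli summands in the representation from \cref{eq:jumps_as_ber} live at disjoint points of $\Lambda$, hence the $J_n(k_j,s_j,t_j)$ are independent, and joint convergence then follows from the marginal convergence in \Cref{prp:individual_poisson}. Your write-up is in fact slightly more explicit than the paper's (spelling out the characteristic-function factorization and Lévy continuity), but the argument is the same.
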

    \begin{proof}
        Since the sets $[s_j,t_j] \times \left\{k_j\right\} \; (j=1,\dots,M)$ are disjoint, we can write each $J_n(k_j,s_j,t_j)$ as the sum of independent Bernoulli random variables as in \cref{eq:jumps_as_ber} without any of the two having the same variables.
        Therefore, the variables $J_n(k_j,s_j,t_j)$ are independent and by \Cref{prp:individual_poisson} the convergence holds for any $j=1,\dots,M$ and the limiting variables are independent.
    \end{proof}

    \begin{figure}[H]
      \centering
      \includegraphics[width=0.6\textwidth]{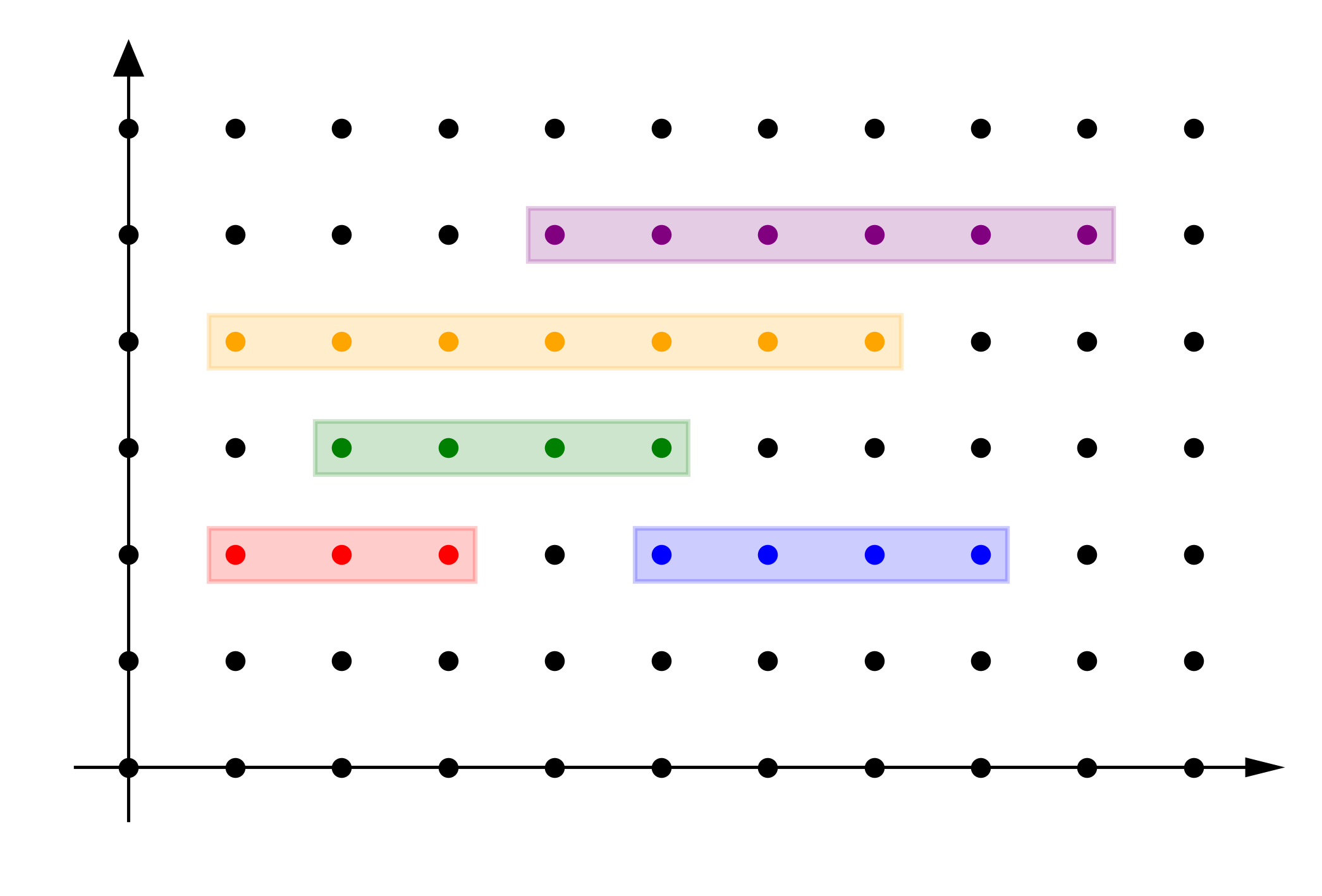}
      \caption{The random variables in \Cref{prp:joint_poisson} converge in distribution to independent random variables with Poisson distribution.}
      \label{fig:poisson_multiple_boxes}
    \end{figure}
    
    From this it is clear that the Pólya Web locally on the compact set $[a,b]\times[K,L]$ converges to the collection of coalescing time-inhomogeneous Poisson jump process.
    That is the processes for a fixed $k \geq 1$ and $s > 0$
    \[
    t \mapsto Y_{(k,\ceil*{ns})}(\floor*{nt})
    \]
    locally converge to the appropriate coalescing Poisson processes.
    Finally notice that if we rescale time (by taking the $\log$) then we can transform our time-inhomogeneous Poisson processes into time-homogeneous ones with extending them to the whole real line.

    \begin{prp}
        Let $(J_k(t))_{t>0}$ be a time inhomogeneous Poisson process with mean
        \[
        \varhato{J_k(t)-J_k(s)} = \int_{s}^{t}\frac{k}{r}\,dr.
        \]
        Then the process $\tilde{J}_k(t) \defeq J_k(e^{t}), \; t \in \R$ is a time-homogeneous Poisson process with mean
        \[
        \varhato{\tilde{J}_k(t)- \tilde{J}_k(s)} = k(t-s).
        \]
    \end{prp}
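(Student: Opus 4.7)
The plan is to verify the two defining properties of a homogeneous Poisson process for $\tilde{J}_k$, namely that it has independent increments and that an increment $\tilde{J}_k(t)-\tilde{J}_k(s)$ is Poisson-distributed with mean $k(t-s)$. Both will follow by pulling the statements back through the deterministic bijection $t \mapsto e^t$ to the inhomogeneous process $J_k$, where they are already known.

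First I would compute the mean. By the definition $\tilde{J}_k(t)-\tilde{J}_k(s) = J_k(e^t)-J_k(e^s)$, and the hypothesis gives that this increment has distribution $\mathrm{Poi}\bigl(\int_{e^s}^{e^t} \tfrac{k}{r}\,dr\bigr)$. Changing variables $r = e^u$ with $dr = e^u\,du = r\,du$ yields $\int_{e^s}^{e^t} \tfrac{k}{r}\,dr = \int_{s}^{t} k\,du = k(t-s)$, so the increment is $\mathrm{Poi}(k(t-s))$ as claimed.

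Next, for independence of increments, I would take disjoint intervals $(s_1,t_1),\dots,(s_m,t_m)$ in $\mathbb{R}$ and note that their images $(e^{s_1},e^{t_1}),\dots,(e^{s_m},e^{t_m})$ are disjoint intervals in $(0,\infty)$, since $u \mapsto e^u$ is strictly increasing. Since $J_k$ is a (time-inhomogeneous) Poisson process, its increments over disjoint intervals are independent, hence so are the corresponding increments of $\tilde{J}_k$. Combined with the previous mean computation, this gives exactly the definition of a time-homogeneous Poisson process of rate $k$.

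There is no real obstacle here: the argument is essentially a change of variables together with the fact that the exponential is an order-preserving bijection $\mathbb{R}\to(0,\infty)$. The only thing to be slightly careful about is that the extension to all of $\mathbb{R}$ makes sense, which it does because $\lim_{t\to-\infty}e^t = 0$ and $\tilde{J}_k(t)$ has finite expectation $k(t-s)$ for every $s<t$, so no blow-up occurs at the left end.
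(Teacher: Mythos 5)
Your proposal is correct and follows essentially the same route as the paper: the heart of both arguments is the change of variables $r=e^u$ turning $\int_{e^s}^{e^t}\frac{k}{r}\,dr$ into $k(t-s)$. The paper dismisses the structural part with ``it is clearly a Poisson process,'' whereas you spell out why the monotone time change preserves independence of increments; that extra care is welcome but does not constitute a different method.
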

    \begin{proof}
        It is clearly a Poisson process with mean
        \[
        \varhato{\tilde{J}_k(t)- \tilde{J}_k(s)} = \varhato{J_k(e^{t})- J_k(e^{s})} =
        \int_{e^{s}}^{e^{t}}\frac{k}{r}\,dr = \int_{s}^{t}k\,dr = k(t-s).
        \]
    \end{proof}
    This motivates the construction in the next subsection.

    \subsection{The Yule Web} \label{yule_web}

    Consider for $1 \leq k$ independent Poisson point processes $A_k$ on the real line with mean $k$.
    Then let
    \[
    \mathcal{A}_k \defeq \left\{s \in \R : \; \text{there is a jump at $s$ in $A_k$}\right\}
    \]
    be the set of the points of jumps on the real line of the Poisson point process with rate $k$.
    Moreover, let
    \[
    \alpha_{k,s} \defeq \inf\left\{t > s : t \in \mathcal{A}_{k} \right\}.
    \]
    Then let us define the following sequence of stopping times (\Cref{fig:yule_stopping_times})
    \begin{equation} \label{eq:stopping_times_for_yule}
    \mathcal{T}_{k,s}(0) \defeq s \qquad \text{and} \qquad
    \mathcal{T}_{k,s}(n+1) \defeq \alpha_{k+n,\mathcal{T}_{k,s}(n)}.
    \end{equation}
    \begin{figure}[H]
      \centering
      \includegraphics[width=0.75\textwidth]{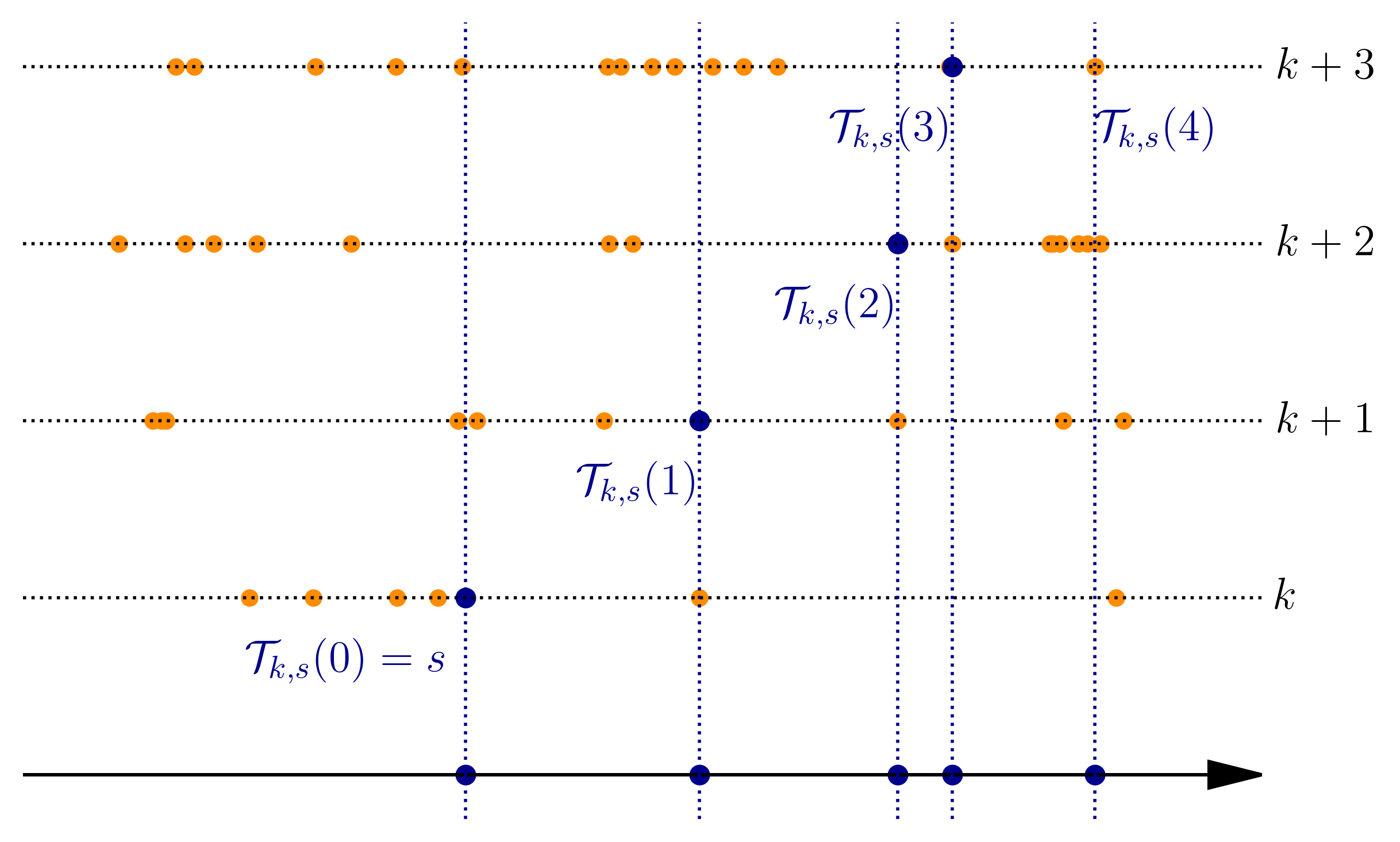}
      \caption{The stopping times for the Yule Web in \cref{eq:stopping_times_for_yule}.}
      \label{fig:yule_stopping_times}
    \end{figure}
    
    Then we can define the Yule Walk started from the point $(k,s)$ (\Cref{fig:single_yule_walk}) by the following way
    \begin{equation} \label{eq:yule_walk_definition}
    V_{k,s}(t)\defeq k + \sup\left\{ n \geq 0 : \; \mathcal{T}_{k,s}(n) \leq t\right\}
    \qquad (t \geq s).
    \end{equation}
    Then a trivial observation is that
     \begin{equation} \label{eq:exponential_ladder_times}
         \mathcal{T}_{k,s}(0) = s \qquad \text{and} \qquad
         \mathcal{T}_{k,s}(n+1) - \mathcal{T}_{k,s}(n) \sim \text{Exp}(k+n).
     \end{equation}
    \begin{figure}[H]
      \centering
      \includegraphics[width=0.75\textwidth]{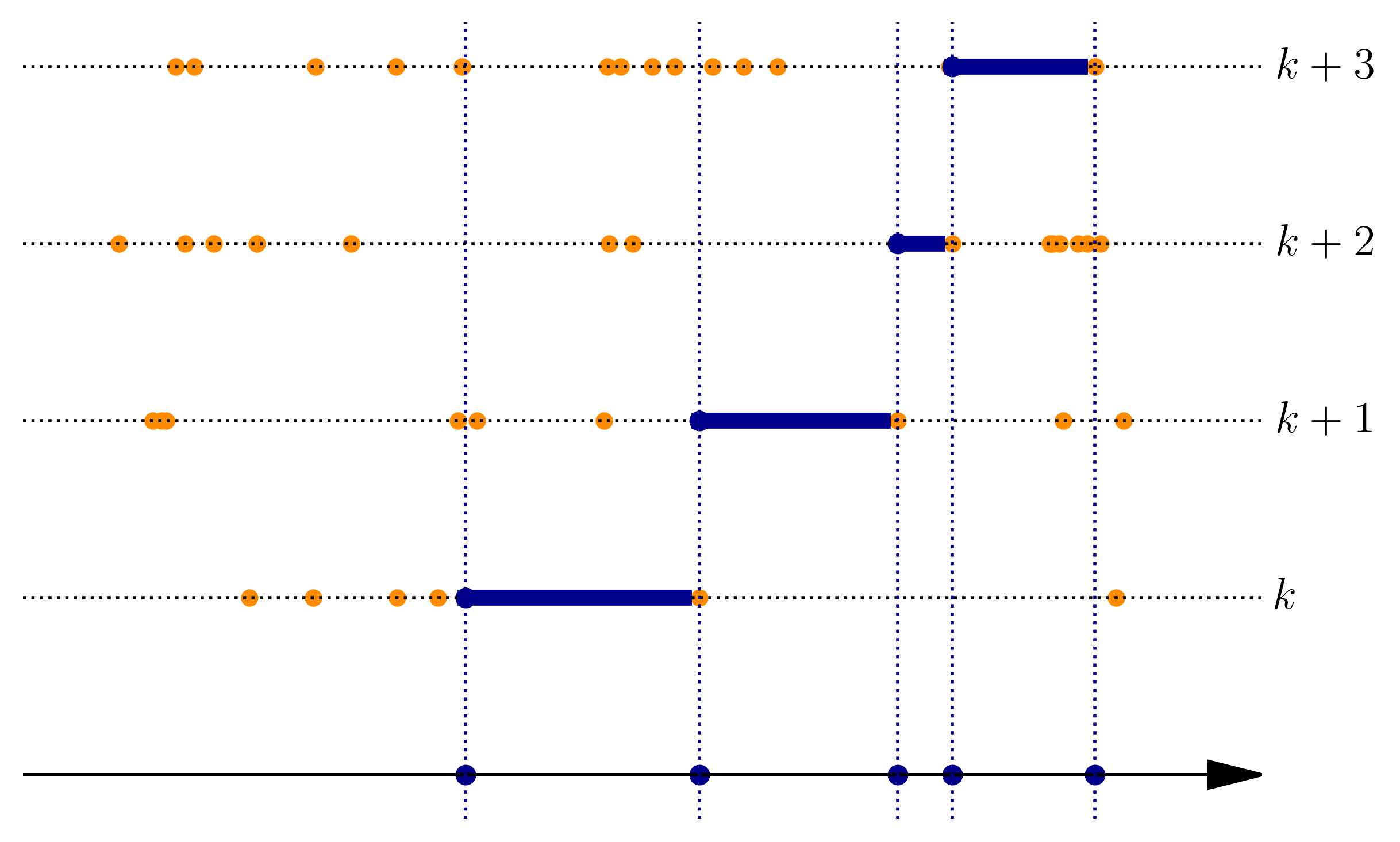}
      \caption{The path of a Yule Walk defined in \cref{eq:yule_walk_definition} in the same realization as in \Cref{fig:yule_stopping_times}.}
      \label{fig:single_yule_walk}
    \end{figure}
    This justifies the name Yule Walk.
    Since the time between two jumps are independent exponentials with parameter $n$ from the $n$th level to the $(n+1)$th, it follows that the process $(V_{k,s}(t))$ is a Yule process started from $V_{k,s}(s) = k$.
    We call this object of jointly defined Yule Walks the \textit{Yule Web}.
    The following propositions are well-known.
    Yet we give elementary proofs using our construction.
    The first one is about the distribution of the position of Yule Walk at a fixed time $t \geq s$.
    \begin{prp} \label{prp:yule_and_negbinom}
    For any $t \geq s$
    \[
        V_{k,s}(t) \sim \text{Negbinom}(k,e^{-(t-s)})
    \]
    \end{prp}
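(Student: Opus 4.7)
The plan is to use the representation from \cref{eq:exponential_ladder_times}: $V_{k,s}(t)=k+n$ if and only if $\mathcal{T}_{k,s}(n)\leq t<\mathcal{T}_{k,s}(n+1)$, where the consecutive gaps are independent with $\mathcal{T}_{k,s}(j+1)-\mathcal{T}_{k,s}(j)\sim\text{Exp}(k+j)$. Writing $\tau\defeq t-s$ and $Q_k(\tau,k+n)\defeq\valseg{V_{k,s}(t)=k+n}$, the target is the identity
\[
Q_k(\tau,k+n)=\binom{k+n-1}{n}e^{-k\tau}\left(1-e^{-\tau}\right)^n,
\]
which is exactly the mass function of $\text{Negbinom}(k,e^{-\tau})$. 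Note the right-hand side depends only on $\tau$, consistent with the stationarity of the driving Poisson point processes $A_k, A_{k+1},\ldots$

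I would proceed by induction on $n$. The base case $n=0$ is immediate: $V_{k,s}(t)=k$ precisely when the first exponential clock $\mathcal{T}_{k,s}(1)-s\sim\text{Exp}(k)$ has not yet fired, so $Q_k(\tau,k)=e^{-k\tau}$, which matches the formula since $\binom{k-1}{0}=1$.

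For the inductive step I would condition on the location $u$ of the first jump of $A_k$ after $s$, and use the independence of $A_{k+1},A_{k+2},\ldots$ together with their stationarity to observe that after this jump the walk continues as an independent Yule Walk started from $(k+1,s+u)$, whose distribution equals that of $V_{k+1,s}$ shifted by time $\tau-u$. This yields the renewal identity
\[
Q_k(\tau,k+n)=\int_0^\tau k\,e^{-ku}\,Q_{k+1}(\tau-u,k+n)\,du.
\]
Substituting the inductive hypothesis for $Q_{k+1}(\tau-u,(k+1)+(n-1))$, then changing variables first $v=\tau-u$ and then $w=1-e^{-v}$, collapses the integral to $(1-e^{-\tau})^{n}/n$, and the identity $\tfrac{k}{n}\binom{k+n-1}{n-1}=\binom{k+n-1}{n}$ closes the induction.

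I do not anticipate a genuine obstacle: the statement is classical and the explicit construction via independent Poisson point processes makes the Markovian conditioning entirely transparent. The only points requiring care are the justification of the stationarity shift when moving from $V_{k+1,s+u}$ to $V_{k+1,s}$ with time horizon $\tau-u$, and the bookkeeping in the two substitutions. An alternative, slightly less elementary route would be to derive the forward Kolmogorov equations $P_n'(t)=-(k+n)P_n(t)+(k+n-1)P_{n-1}(t)$ from the construction and verify by direct differentiation that the proposed formula is their unique solution under the initial condition concentrated at $k$, but the inductive argument above stays closer to the Poisson-process construction the author wishes to showcase.
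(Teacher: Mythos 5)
Your proof is correct, but it takes a genuinely different route from the paper's. The paper computes the tail probabilities $\valseg{V_{k,s}(t)\geq n}=\valseg{\mathcal{T}_{k,s}(n-k)\leq t}$ by quoting the explicit cumulative distribution function of a sum of independent exponentials with pairwise distinct rates, then differences consecutive tails and simplifies the resulting alternating binomial sums to arrive at the negative binomial mass function. You instead condition on the first jump of $A_k$ after $s$ and use the independence and stationarity of $A_{k+1},A_{k+2},\dots$ to obtain the convolution recursion $Q_k(\tau,k+n)=\int_0^\tau k e^{-ku}\,Q_{k+1}(\tau-u,k+n)\,du$, which collapses under two elementary substitutions; the identity $\tfrac{k}{n}\binom{k+n-1}{n-1}=\binom{k+n-1}{n}$ then closes the induction. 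Your route avoids both the hypoexponential formula and the alternating-sum bookkeeping, and stays closer to the Markovian structure of the Poisson-process construction; the paper's route produces the explicit tail probabilities $\valseg{V_{k,s}(t)\geq n}$ as a byproduct, which can be independently useful. The one point you should state explicitly is that the induction variable is $n$ with the claim asserted uniformly in the starting level $k$, since the inductive step at level $k$ invokes the hypothesis at level $k+1$ with $n-1$; as written this is implicit but unambiguous, and the stationarity shift from the continuation after the first jump to $V_{k+1,s}$ over the remaining duration $\tau-u$ is justified exactly as you indicate.
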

    \begin{proof}
        Notice that for $n \geq k$
        \[
        \valseg{V_{k,s}(t) \geq n} = \valseg{\mathcal{T}_{k,s}(n-k)\leq t}.
        \]
        Notice that by \cref{eq:exponential_ladder_times}
        \[
        \mathcal{T}_{k,s}(n-k) = s +
        \sum_{j=0}^{n-k-1} \underbrace{\mathcal{T}_{k,s}(j+1) - \mathcal{T}_{k,s}(j)}_{\text{independent Exp($k+j$)}}.
        \]
        There is a well-known formula for the distribution function of the sum of independent exponentials with different parameters.
        That is for $E_j \sim \text{Exp}(\lambda_j) \; j = 1,\dots,n$ such that $\lambda_j \neq \lambda_i \; (i\neq j )$ the cumulative distribution function of $E_1+ \dots+ E_n$ at $t > 0$ is
        \[
        1 - \sum_{j=1}^{n}e^{-\lambda_jt}\prod_{\substack{i=1\\i\neq j}}^{n}\frac{\lambda_j}{\lambda_j - \lambda_i}
        \]
        In this case this implies after an algebraic manipulation
        \[
        \begin{split}
            \valseg{\mathcal{T}_{k,s}(n-k)\leq t} &= 1 - \sum_{j=0}^{n-k-1}e^{-(k+j)(t-s)}\prod_{\substack{i=0\\i\neq j}}^{n-k-1}\frac{k+i}{(k+i)-(k+j)}\\
        &= 1 - \sum_{j=0}^{n-k-1}(-1)^{j}e^{-(k+j)(t-s)}\frac{n-1}{k+j}{n-2 \choose k-1}{n-1-k \choose j}.
        \end{split}
        \]
        Then notice
        \[
        \valseg{V_{k,s}(t) = n} = \valseg{V_{k,s}(t) \geq n} - \valseg{V_{k,s}(t) \geq n+1},
        \]
        thus, after comparing it with the previous result denoting $p = e^{-(t-s)}$
        \[
        \begin{split}
            &\valseg{V_{k,s}(t) = n}\\
            &=\sum_{j=0}^{n-k}(-1)^{j}p^{k+j}\frac{n}{k+j}{n-1 \choose k-1}{n-k \choose j}
            - \sum_{j=0}^{n-k-1}(-1)^{j}p^{k+j}\frac{n-1}{k+j}{n-2 \choose k-1}{n-1-k \choose j}.\\
            &= {n-1 \choose k-1}p^k \left[\sum_{j=0}^{n-k}(-p)^{j}\frac{n}{k+j}{n-k \choose j} - \sum_{j=0}^{n-1-k}(-p)^{j}\frac{n-k}{k+j}{n-1-k \choose j}\right]\\
            &= {n-1 \choose k-1}p^k \left[(-p)^{n-k} + \sum_{j=0}^{n-1-k}(-p)^{j}\left(\frac{n}{k+j}{n-k \choose j}-\frac{n-k}{k+j}{n-1-k \choose j}\right)\right]\\
            &= {n-1 \choose k-1}p^k \left[(-p)^{n-k} + \sum_{j=0}^{n-1-k}(-p)^{j}{n-k \choose j}\right]\\
            &={n-1 \choose k-1}p^k\sum_{j=0}^{n-k}(-p)^{j}{n-k \choose j}\\
            &= {n-1 \choose k-1}p^k(1-p)^{n-k}\\
        \end{split}
        \]
        which is the probability mass function of $\text{Negbinom}(k,p)$ at $n \geq k$, where $p=e^{-(t-s)}$.
    \end{proof}
    The second one shows that the Yule Walk is a branching process in continuous time.
    \begin{prp} \label{prp:yule_branching}
    For any $s \leq r \leq t$
        \[
        V_{k,s}(t) \overset{d}{=} \sum_{j=1}^{V_{k,s}(r)}\tilde{V}_{1,r}^{(j)}(t),
        \]
        where $\tilde{V}_{1,r}^{(j)}(t)$ are i.i.d. Yule Walks.
    \end{prp}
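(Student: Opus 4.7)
The plan is to reduce the proposition to a branching identity with a deterministic starting population, which I would then verify directly using \Cref{prp:yule_and_negbinom} together with the additivity of negative binomials sharing a success parameter.

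First, I would establish a strong Markov property for the Yule Walk $V_{k,s}$ at the fixed time $r$: conditionally on $V_{k,s}(r) = m$, the restarted process $(V_{k,s}(t))_{t \geq r}$ is distributed as a fresh Yule Walk $V_{m,r}(\cdot)$. On the event $\{V_{k,s}(r)=m\}$ the walker is at level $m$ at time $r$ and has not yet jumped to level $m+1$; the waiting time until that jump is the distance from $r$ to the next point of the Poisson process $A_m$ after $r$, which by the memoryless property of $A_m$ is $\mathrm{Exp}(m)$. The subsequent holding times at levels $m+1, m+2, \dots$ come from the independent processes $A_{m+1}, A_{m+2}, \dots$ and are therefore independent $\mathrm{Exp}(m+j)$ random variables, matching \cref{eq:exponential_ladder_times} for a Yule Walk started at $(m,r)$.

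Next, I would verify the branching identity in the deterministic-population case: for every integer $m \geq 1$,
\[
V_{m,r}(t) \overset{d}{=} \sum_{j=1}^m \tilde V_{1,r}^{(j)}(t).
\]
By \Cref{prp:yule_and_negbinom} the left-hand side is $\mathrm{Negbinom}(m, e^{-(t-r)})$ and each summand on the right is independently $\mathrm{Negbinom}(1, e^{-(t-r)})$. The sum of $m$ independent $\mathrm{Negbinom}(1, p)$ variables is $\mathrm{Negbinom}(m, p)$, as is seen by raising the probability generating function $\frac{pz}{1-(1-p)z}$ of $\mathrm{Negbinom}(1, p)$ to the $m$-th power and recognising the result as the generating function of $\mathrm{Negbinom}(m, p)$.

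Combining the two steps by conditioning on $V_{k,s}(r)$ (using that the i.i.d. copies $\tilde V_{1,r}^{(j)}$ are by construction independent of $V_{k,s}(r)$) gives, for each $n \geq k$,
\[
\valseg{V_{k,s}(t) = n} = \sum_{m \geq k} \valseg{V_{k,s}(r) = m} \, \valseg{V_{m,r}(t) = n};
\]
replacing $\valseg{V_{m,r}(t) = n}$ with $\valseg{\sum_{j=1}^m \tilde V_{1,r}^{(j)}(t) = n}$ via the deterministic branching step turns the right-hand side into $\valseg{\sum_{j=1}^{V_{k,s}(r)} \tilde V_{1,r}^{(j)}(t) = n}$, which is the claim. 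The only genuinely delicate point is the justification of the strong Markov property at the deterministic time $r$, but this is entirely standard for a pure birth process built from independent exponential clocks; the remainder is routine algebra.
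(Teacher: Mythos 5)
Your proof is correct, but it takes a different route from the paper's. The paper argues purely distributionally: it writes down the probability generating function $P(z)=\bigl(\tfrac{e^{-(r-s)}z}{1-(1-e^{-(r-s)})z}\bigr)^{k}$ of $V_{k,s}(r)$ and the generating function $Q(z)$ of a single $\tilde V^{(j)}_{1,r}(t)$, uses the standard fact that the compound sum has generating function $P(Q(z))$, and simplifies $P(Q(z))$ to the generating function of $\mathrm{Negbinom}(k,e^{-(t-s)})$, i.e.\ of $V_{k,s}(t)$ by \Cref{prp:yule_and_negbinom}. It never touches the pathwise construction. You instead split the identity into two ingredients: the Markov restart property at the deterministic time $r$ (conditionally on $V_{k,s}(r)=m$ the future is a fresh $V_{m,r}$), which you justify correctly from the memorylessness of $A_m$ on $(r,\infty)$ and the independence of the higher clocks $A_{m+1},A_{m+2},\dots$ from the event $\{V_{k,s}(r)=m\}$; and the convolution identity $\mathrm{Negbinom}(1,p)^{*m}=\mathrm{Negbinom}(m,p)$ in the paper's ``number of trials'' parametrization. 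Gluing these by conditioning on $V_{k,s}(r)$ gives the claim. What your route buys is an explanation of \emph{why} the Yule Walk branches --- each of the $V_{k,s}(r)$ individuals alive at time $r$ independently founds a Yule family --- at the cost of having to establish the restart property, which you rightly flag as the only delicate point and which does hold here since $\{V_{k,s}(r)=m\}$ is measurable with respect to the clocks restricted to $(-\infty,r]$. The paper's route is shorter but entirely computational, and it hides the probabilistic mechanism inside the algebraic simplification of $P(Q(z))$.
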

    \begin{proof}
        The probability generating function of ${V_{k,s}(r)}$ by \Cref{prp:yule_and_negbinom} is
        \[
        P(z) = \left(\frac{e^{-(r-s)}z}{1-(1-e^{-(r-s)})z}\right)^k,
        \]
        and the probability generating function of ${V_{1,r}^{(j)}(t)}$ is
        \[
        Q(z) = \frac{e^{-k(t-r)}z}{1-(1-e^{-k(t-r)})z}.
        \]
        Therefore, the probability generating function of $\sum_{j=1}^{V_{k,r}(t)}\tilde{V}_{1,r}^{(j)}(t)$ is
        \[
        P(Q(z)) = \left(\frac{e^{-(t-s)}z}{1-(1-e^{-(t-s)})z}\right)^k,
        \]
        which is exactly the probability generating function of $V_{k,s}(t)$.
    \end{proof}
    After an appropriate scaling of a Yule Walk we obtain a martingale. This is the analogue of the martingale in the case of the Pólya Walk defined in \cref{eq:def_of_the_ration}.
    This will play a similar role in the case of the Yule Web.
    \begin{prp}\label{prp:yule_martingale}
        The process
        \[
        U_{k,s}(t) \defeq e^{-(t-s)}V_{k,s}(t) \qquad (t \geq s)
        \]
        is a  martingale with respect to the natural filtration of $(V_{k,s}(t))_{t\geq s}$. 
    \end{prp}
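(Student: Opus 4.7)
The plan is to verify the martingale property directly using the branching decomposition in \Cref{prp:yule_branching} together with the negative binomial distribution identified in \Cref{prp:yule_and_negbinom}. Integrability will follow immediately from an expectation computation, and the tower-type identity will reduce to a single conditional expectation computation.

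First I would compute the mean of a Yule walk. By \Cref{prp:yule_and_negbinom} the variable $V_{k,s}(t)$ has distribution $\mathrm{Negbinom}(k, e^{-(t-s)})$, so
\[
\varhato{V_{k,s}(t)} = \frac{k}{e^{-(t-s)}} = k e^{t-s}.
\]
This immediately yields $\varhato{|U_{k,s}(t)|} = e^{-(t-s)} \cdot k e^{t-s} = k < \infty$, establishing integrability for every $t \geq s$. Also, $U_{k,s}(t)$ is clearly measurable with respect to the natural filtration $(\mathcal{F}_t)_{t \geq s}$ of $(V_{k,s}(t))_{t \geq s}$ since it is a deterministic function of $V_{k,s}(t)$.

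Next, for $s \leq r \leq t$, I would use \Cref{prp:yule_branching}, which actually gives more than a distributional equality: conditionally on $\mathcal{F}_r$, the process $(V_{k,s}(u))_{u \geq r}$ behaves as a sum of $V_{k,s}(r)$ independent Yule walks of the form $\tilde V_{1,r}^{(j)}(u)$, each started at time $r$ from level one. Taking conditional expectation and using the mean computation above applied to $V_{1,r}(t) \sim \mathrm{Negbinom}(1, e^{-(t-r)})$,
\[
\varhato{V_{k,s}(t) \mid \mathcal{F}_r} = V_{k,s}(r) \cdot \varhato{V_{1,r}(t)} = V_{k,s}(r) \cdot e^{t-r}.
\]
Multiplying by $e^{-(t-s)}$ then gives
\[
\varhato{U_{k,s}(t) \mid \mathcal{F}_r} = e^{-(t-s)} \cdot e^{t-r} \cdot V_{k,s}(r) = e^{-(r-s)} V_{k,s}(r) = U_{k,s}(r),
\]
which is exactly the martingale identity.

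The only delicate point is the conditional form of \Cref{prp:yule_branching}: the stated version is an equality in distribution, whereas the argument needs the conditional independence of the sub-walks from $\mathcal{F}_r$ given $V_{k,s}(r)$. This is really built into the construction in \cref{eq:stopping_times_for_yule}, since after time $r$ the increments of the underlying Poisson point processes $A_j$ are independent of $\mathcal{F}_r$ by the strong Markov property of Poisson processes, and from any level $n$ the walk evolves only using $A_n, A_{n+1}, \dots$ after the hitting time. So the main obstacle is less computational than notational: one has to justify that the branching decomposition extends to a conditional-on-$\mathcal{F}_r$ statement. Alternatively, to avoid invoking the branching proposition entirely, one could give a short direct proof by observing that $V_{k,s}$ is a pure-birth (Yule) process with generator $(\mathcal{L} f)(n) = n(f(n+1)-f(n))$, applying it to $f(n) = n$ yields $\mathcal{L} f = f$, and Dynkin's formula then gives $\varhato{V_{k,s}(t) \mid \mathcal{F}_r} = V_{k,s}(r) + \int_r^t \varhato{V_{k,s}(u) \mid \mathcal{F}_r}\,du$, whose solution $V_{k,s}(r) e^{t-r}$ recovers the same identity.
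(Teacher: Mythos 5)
Your proof is correct and follows essentially the same route as the paper: adaptedness and integrability from the negative binomial mean, then the branching decomposition of \Cref{prp:yule_branching} applied under conditioning on $\mathcal{F}_r$, with $\varhato{\tilde V_{1,r}^{(1)}(t)} = e^{t-r}$ closing the computation. Your remark that the branching proposition as stated is only a distributional identity and must be upgraded to a conditional statement is a genuine subtlety that the paper's own proof silently skips over, so flagging it (and offering the generator/Dynkin alternative) only strengthens the argument.
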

    \begin{proof}
        The process is clearly adapted and has a finite first absolute moment for any $t \geq s$.
        Moreover, for any $t \geq r \geq s$ by \Cref{prp:yule_branching} if we denote $\mathcal{F}_r \defeq \sigma\left(V_{k,s}(u) : \; s \leq u \leq r \right)$
        \[
        \varhato{U_{k,s}(t)| \, \mathcal{F}_r} =
        e^{-(t-s)}\varhato{ \left. \sum_{j=0}^{V_{k,s}(r)}\tilde{V}_{1,r}^{(j)}(t)\,\right| \, \mathcal{F}_r} =
        e^{-(t-s)}V_{k,s}(r)\varhato{\tilde{V}_{1,r}^{(1)}(t)}.
        \]
        Notice that using \Cref{prp:yule_and_negbinom}
        \[
        \varhato{\tilde{V}_{1,r}^{(1)}(t)} = e^{t-r}.
        \]
        Therefore,
        \[
        \varhato{U_{k,s}(t)| \, \mathcal{F}_r} = e^{-(r-s)}V_{k,s}(r) = U_{k,s}(r).
        \]
    \end{proof}
    Finally we identity the limiting random variable of the previously obtained martingale.
    \begin{prp}\label{prp:yule_convergence_to_gamma}
        \[
        \lim_{t \to \infty}U_{k,s}(t) \eqdef U_{k} \sim \text{Gamma}(k,1)
        \qquad \text{almost surely}.
        \]
    \end{prp}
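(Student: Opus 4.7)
The plan is to first establish almost sure convergence via Doob's martingale convergence theorem, and then to identify the limiting distribution through an explicit Laplace transform computation using the negative binomial distribution of \Cref{prp:yule_and_negbinom}.

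First I would invoke \Cref{prp:yule_martingale}, which asserts that the process $U_{k,s}(t)$ is a nonnegative martingale. Because $V_{k,s}(s) = k$, one has $\varhato{U_{k,s}(t)} = \varhato{U_{k,s}(s)} = k$ for every $t \geq s$, so the martingale is $L^1$-bounded. Doob's convergence theorem for nonnegative martingales then produces the almost sure limit $U_k \defeq \lim_{t \to \infty} U_{k,s}(t)$, which already takes care of the existence part of the statement.

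To identify the distribution of $U_k$, I would compute the Laplace transform of $U_{k,s}(t)$ explicitly. Writing $p \defeq e^{-(t-s)}$, the negative binomial formula from \Cref{prp:yule_and_negbinom} yields, after summing the series,
$$\varhato{e^{-\lambda V_{k,s}(t)}} = \left(\frac{p\, e^{-\lambda}}{1-(1-p)e^{-\lambda}}\right)^{\!k}, \qquad \lambda \geq 0,$$
and substituting $\lambda \mapsto \lambda p$ gives the Laplace transform of $U_{k,s}(t) = p\, V_{k,s}(t)$:
$$\varhato{e^{-\lambda U_{k,s}(t)}} = \left(\frac{p\, e^{-\lambda p}}{1-(1-p)e^{-\lambda p}}\right)^{\!k}.$$
A first-order Taylor expansion of the numerator and denominator in $p$ shows that the inner ratio tends to $(1+\lambda)^{-1}$ as $p \to 0$, so the whole expression converges to $(1+\lambda)^{-k}$, which is precisely the Laplace transform of $\text{Gamma}(k,1)$.

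Since almost sure convergence implies convergence in distribution, and Laplace transforms uniquely determine distributions on $[0,\infty)$, the limit $U_k$ must have the $\text{Gamma}(k,1)$ distribution. There is no serious obstacle here: the only real computational work is the elementary Taylor expansion in the Laplace transform limit, and the only conceptual ingredient is matching the almost sure martingale limit with the distributional limit via uniqueness. As an alternative second step, one could instead invoke the branching identity of \Cref{prp:yule_branching} with $r=s$ to write $U_{k,s}(t) \overset{d}{=} \sum_{j=1}^{k}\tilde{U}_{1,s}^{(j)}(t)$ and reduce everything to the case $k=1$, where $V_{1,s}(t)$ is geometric and the direct tail estimate $\valseg{pV_{1,s}(t) \geq x} = (1-p)^{\lceil x/p\rceil -1} \to e^{-x}$ shows $U_1 \sim \text{Exp}(1)$; the Gamma distribution for general $k$ then follows by summing $k$ independent copies.
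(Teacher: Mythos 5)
Your argument is correct and follows essentially the same route as the paper: almost sure convergence from the nonnegative-martingale property, followed by identification of the limit law via a transform computed from the negative binomial distribution of \Cref{prp:yule_and_negbinom} (you use the Laplace transform where the paper uses the characteristic function, which is an immaterial difference).
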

    \begin{proof}
    Since $U_{k,s}(t)$ is a non-negative martingale, it converges almost surely as $t \to \infty$. Therefore, $U_k$ is well-defined.
    For its distribution using \Cref{prp:yule_and_negbinom} the characteristic function of $U_{k,s}(t)$ at any $x \in \R$
    \[
    \varhato{e^{ixU_{k,s}(t)}} =
    \left(\frac{e^{-(t-s)}e^{ie^{-(t-s)}x}}{1-(1-e^{-(t-s)})e^{ie^{-(t-s)}x}}\right)^k
    \to \left(\frac{1}{1-ix}\right)^k \quad \text{as $t \to \infty$}.
    \]
    This is exactly the characteristic function of $\text{Gamma}(k,1)$ at $x$.
    \end{proof}

    Now consider two Yule Walks processes started from different points $(k_1,s_1)$ and $(k_2,s_2)$.
    Then clearly by the definition of the Yule Web they move independently until a time they meet.
    From that time they follow the same paths for the rest of the trajectory.
    Therefore, the Yule Web is the web of coalescing Yule Walks started from the points $\left\{ (k,s) : k \in \N^+, s \in \R\right\}$ (\Cref{fig:yule_web}).
    \begin{figure}[H]
      \centering
      \includegraphics[width=0.75\textwidth]{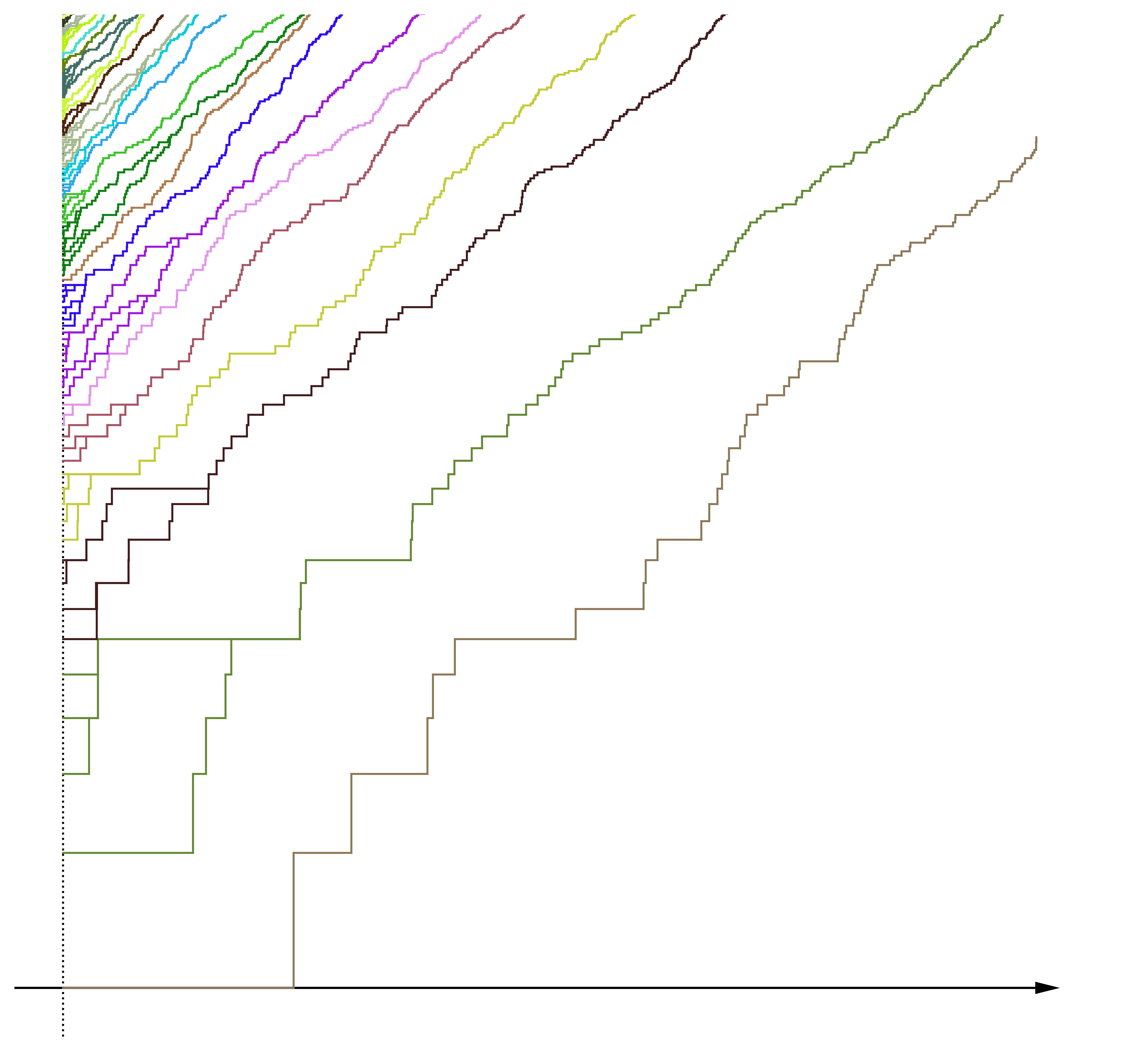}
      \caption{Coalescing Yule Walks started from the same time on a log-scale.}
      \label{fig:yule_web}
    \end{figure}
    It is easy to check that the equivalent versions of the statements in \Cref{section_4} hold for the for the Yule Web.
    Most importantly if we denote the density of $\text{Gamma}(k,1)$ by
    \[
    g_{k}(x) \defeq \frac{1}{\Gamma(k)}x^{k-1}e^{-x}\cdot\mathds{1}\left[x>0\right]
    \]
    and the cumulative distribution function by
    \[
    G_{k}(x) \defeq \int_{-\infty}^{x}g_k(u)\,du,
    \]
    then the following statements hold for coalescing Yule Walks started from the same fixed time $s\in\R$.
    \begin{thm} \label{joint_density_for_yule}
        For any $1 \leq l_1 \leq r_1 < \dots l_n \leq r_n$ the joint density of the limiting random variables $U_{l_1}, U_{r_1},\dots,U_{l_n},U_{r_n}$ for any $0 < x_1 < \dots < x_n$ is
        \[
        \begin{pmatrix}
        l_1 & r_1 & l_2 & r_2 &\dots & l_n & r_n\\
        x_1 & x_1 & x_2 & x_2 &\dots & x_n & x_n\\
        \end{pmatrix}
        \]
        \[
        =
        \begin{vNiceMatrix}[c,margin]
        1-G_{l_1}(x_1) & g_{l_1}(x_1) & 1-G_{l_1}(x_2) & g_{l_1}(x_2) & \dots &
        1-G_{l_1}(x_n) & g_{l_1}(x_n)\\
        -G_{r_1}(x_1) & g_{r_1}(x_1) & 1-G_{r_1}(x_2) & g_{r_1}(x_2) & \dots &
        1-G_{r_1}(x_n) & g_{r_1}(x_n)\\
        -G_{l_2}(x_1) & g_{l_2}(x_1) & 1-G_{l_2}(x_2) & g_{l_2}(x_2) & \dots &
        1-G_{l_2}(x_n) & g_{l_2}(x_n)\\
        -G_{r_2}(x_1) & g_{r_2}(x_1) & -G_{r_2}(x_2) & g_{r_2}(x_2) & \dots &
        1-G_{r_2}(x_n) & g_{r_2}(x_n)\\
        \vdots & \vdots & \vdots & \vdots & \ddots & \vdots &\vdots\\
        -G_{l_n}(x_1) & g_{l_n}(x_1) & -G_{l_n}(x_2) & g_{l_n}(x_2) & \dots &
        1-G_{l_n}(x_n) & g_{l_n}(x_n)\\
        -G_{r_n}(x_1) & g_{r_n}(x_1) & -G_{r_n}(x_2) & g_{r_n}(x_2) & \dots &
        -G_{r_n}(x_n) & g_{r_n}(x_n)\\
    \end{vNiceMatrix}_{2n \times 2n}.
        \]
    \end{thm}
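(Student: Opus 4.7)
The proof plan is to mirror, step for step, the derivation of \Cref{thm:joint_density} for the Pólya Web, replacing Pólya Walks with Yule Walks, the Beta distribution with the Gamma distribution, the Pólya-Urn convergence in \cref{eq:beta_convergence} with the martingale convergence in \Cref{prp:yule_convergence_to_gamma}, and the discrete-time Karlin--McGregor input (\Cref{prp:karlin_mcgregor_for_coalescing_polya}) with its continuous-time analogue for Yule processes. The skeleton of the argument — the reduction to the monotone strictly increasing case, the identification of coalescence with coincidence of the limiting variables, and the inductive determinant manipulation — depends only on formal properties shared by the two webs, so the bulk of the work is to establish the base case for separated points in the Yule setting.

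First I would establish the continuous-time Karlin--McGregor identity for the Yule Web. Since each Yule Walk is a pure-birth process on $\Z$, the hypotheses of \Cref{thm:karlin_mcgregor} are satisfied; therefore, for $\lambda_1 \prec \dots \prec \lambda_n$ starting at the same time $s$, and integers $k_1 < \dots < k_n$, one has the determinantal identity
\[
\valseg{\bigcap_{j=1}^{n}\left\{V_{\lambda_j,s}(t) = k_j\right\}}
= \det\left[\valseg{V_{\lambda_i,s}(t) = k_j}\right]_{i,j=1}^n.
\]
The argument is exactly the one used in \Cref{prp:karlin_mcgregor_for_coalescing_polya}: the Yule Walks on the Yule Web coalesce, but on the event that their positions remain distinct at time $t$ they are indistinguishable in law from $n$ independent Yule Walks, and the determinant enforces the no-crossing constraint.

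Next I would upgrade this identity to a density statement for the scaled limits $U_{\lambda_j}$. Following the template of \Cref{lmm:density_of_different_points}, I would fix $0 < a_1 < b_1 < \dots < a_n < b_n$, write
\[
\valseg{\bigcap_{j=1}^{n}\left\{U_{\lambda_j} \in [a_j,b_j)\right\}}
= \lim_{t\to\infty}\valseg{\bigcap_{j=1}^{n}\left\{V_{\lambda_j,s}(t) \in e^{t-s}\cdot[a_j,b_j)\right\}}
\]
using \Cref{prp:yule_convergence_to_gamma}, partition over values of $V_{\lambda_j,s}(t)$, apply the Karlin--McGregor determinantal identity to the joint probabilities in the sum, pull the sums into the columns of the determinant using multilinearity, and pass to the limit using \Cref{prp:yule_and_negbinom} (negative binomial converges to Gamma after the exponential rescaling). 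This produces the ``separated'' density $\det[g_{\lambda_i}(x_j)]_{i,j=1}^n$, which is the Yule analogue of \Cref{lmm:density_of_different_points}.

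Finally, with the separated-case density in hand, I would run the induction of the proof of \Cref{thm:joint_density} verbatim, with $g$ in place of $f$ and $G$ in place of $F$. The base case $k=1$ (one merged pair) is obtained either by the row-addition trick $l_1=r_1$ (using multilinearity of the determinant and the fact that $g_{l_1}(x) + $ derivative of $G_{l_1}$ combine) or, when $l_1 \prec r_1$, by integrating the $n{+}1$-point separated density over $u \in (-\infty,x_1)$ to close the $l_1$ and $r_1$ rows into cumulative-distribution form; the inductive step splits an interval integration into the two adjacent cases and recombines three determinants into a single one by the ``rank-one correction'' identity used in the Pólya proof. The hardest technical point will be the first step: legitimising the passage to the limit $t\to\infty$ through the expanding Karlin--McGregor determinant (comparable to the discrete sum manipulation in \cref{eq:elementary_determinant}) requires controlled tail bounds on the negative-binomial mass functions to swap sum and determinant, but these bounds are standard and the rest of the argument proceeds with no further surprises.
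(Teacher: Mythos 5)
Your proposal is correct and is essentially the argument the paper intends: the paper gives no written proof of this theorem at all, merely asserting that the machinery of \Cref{section_4} carries over to the Yule Web, and your outline correctly identifies every substitution needed (the continuous-time Karlin--McGregor identity for the coalescing pure-birth chains, the negative-binomial-to-Gamma limit via \Cref{prp:yule_and_negbinom} and \Cref{prp:yule_convergence_to_gamma} in place of the Beta convergence, and the verbatim determinant induction with $g,G$ replacing $f,F$). One small simplification: your final technical worry about tail bounds is unnecessary, since --- exactly as in \cref{eq:elementary_determinant} --- the multilinearity manipulation is carried out on finite sums inside a fixed $n\times n$ determinant, so passing to the limit $t\to\infty$ only requires convergence in distribution of the entries $\valseg{U_{\lambda_i,s}(t)\in I_j}$ on intervals (which follows from almost sure convergence and the continuity of the Gamma law), not a local limit theorem or uniform tail control.
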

    This theorem is clearly the analogous version of \Cref{thm:joint_density}.
    Moreover, if we introduce the stopping times
    \[
    \tau_{k,l} \defeq \inf\left\{t \geq s : \; V_{k,s}(t) = V_{l,s}(t) \right\}
    \]
    then for example for the special case $k \geq 1$ we have
    \[
    \valseg{\tau_{k,k+1} = \infty} =
    \int_{0}^{\infty} \int_{0}^{y}
    \begin{vNiceMatrix}[c,margin]
    g_{k-1}(x) & g_{k-1}(y)\\
    g_{k}(x) & g_{k}(y)
    \end{vNiceMatrix}
    \,dx\,dy
    = \frac{1}{2^{2k}}{2k \choose k}.
    \]
    It is interesting to notice that this is exactly the probability that a simple symmetric random walk returns to its starting point in $2k$ steps.
    
    \newpage

    \section{Summary and outlook} \label{summary_and_outlook}
    \thispagestyle{plain}
    As mentioned in the introduction, the Pólya Web has turned out to be a very interesting and rich mathematical structure.
    The fact that we can characterize the the event whether two walks coalesced in finite time or not by only looking at the limiting random variables (with Beta distribution (\Cref{lmm:witness_functions_and_stopping_times}) is quite an interesting observation.
    The joint density (\Cref{thm:joint_density}) helps us to calculate these probabilities for any scenario of finite number of Pólya Walks.
    We have showed a strong law for the number of components (\Cref{prp:stong_law_for_the_polya_components}).
    In the future we will try to show a large deviation bound and a limit theorem.

    A local scaling came naturally on the edges from the Poisson approximation (\Cref{prp:joint_poisson}), and from it the Yule Web emerged (\cref{yule_web}).
    It had analogous properties as the Pólya Web, using the appropriate limiting variables (with Gamma distribution).
    We intend to continue with the scaling in the bulk.
    Consider the scales processes for a fixed $\alpha \in (0,1)$ defined for $0 < s \leq t$ and $h \in \R$
    \[
    W_n(s,t,h) \defeq \frac{1}{\sqrt{n}}\left(X_{\left(\floor*{n\alpha s + \sqrt{n}h},\ceil*{n(1-\alpha)s-\sqrt{n}h}\right)}(\floor*{nt})-n\alpha t\right).
    \]
    Then we aim to prove process convergence of $W_n(s,t,h)$ as $n \to \infty$ to a particular variant of the Brownian Web distorted in both "space" and time, $W(s,t,h)$.
    
    Another interesting aspect is to analyse the random set which we get if we consider the realizations in $[0,1]$ of the limiting (Beta distributed) random variables from all the points in $\N \times \N$ (similar to the ones in \cite{tsirelson:2006}).
    Moreover, it is also an interesting aspect to consider random measures on $[0,1]$ induced by the Pólya Web.

    One word after another, we will continue the work we have started here with great enthusiasm in the future.

    \newpage
    \phantomsection
    \bibliographystyle{plain}
    \bibliography{references}
    
    \end{document}